\documentclass[11pt,reqno]{amsart}
\input{glyphtounicode}
\usepackage[T1]{fontenc}
\usepackage[utf8]{inputenc}
\usepackage{lmodern}
\usepackage[a4paper,margin=27mm]{geometry}
\usepackage{amsmath,amssymb,amsthm,mathtools}
\usepackage{microtype,booktabs,array,enumitem}
\usepackage{needspace}
\usepackage{xcolor}
\definecolor{linkblue}{RGB}{25,60,105}
\usepackage[colorlinks=true,linkcolor=linkblue,citecolor=linkblue,urlcolor=linkblue]{hyperref}
\usepackage{bookmark}
\usepackage[nameinlink,noabbrev]{cleveref}
\crefname{theorem}{Theorem}{Theorems}
\crefname{lemma}{Lemma}{Lemmas}
\crefname{proposition}{Proposition}{Propositions}
\crefname{corollary}{Corollary}{Corollaries}
\crefname{definition}{Definition}{Definitions}
\crefname{section}{Section}{Sections}
\crefname{subsection}{Section}{Sections}
\crefname{problem}{Problem}{Problems}
\crefname{question}{Question}{Questions}
\allowdisplaybreaks[2]
\setlist{itemsep=3pt,topsep=5pt}
\numberwithin{equation}{section}
\newtheorem{theorem}{Theorem}[section]
\newtheorem{lemma}[theorem]{Lemma}
\newtheorem{proposition}[theorem]{Proposition}
\newtheorem{corollary}[theorem]{Corollary}
\theoremstyle{definition}
\newtheorem{definition}[theorem]{Definition}
\newtheorem{problem}[theorem]{Problem}
\newtheorem{question}[theorem]{Question}
\theoremstyle{remark}

\title[Exact and extremal chromatic villainy]{Exact and extremal values of chromatic villainy}
\author[Jiang, Jiang, Li, Wang, and Zhang]{Jiang Jiang, Yuhan Jiang, Yiyang Li, Lujia Wang, and Aitao Zhang}
\address{Hainan Bielefeld University of Applied Sciences, Danzhou, China}
\email{lujia.wang@hibiuh.edu.cn}
\email{\{x20250083,x20250116,x20250157,x20250275\}@stu.hibiuh.edu.cn}
\hypersetup{pdftitle={Exact and extremal values of chromatic villainy},
pdfauthor={Jiang Jiang, Yuhan Jiang, Yiyang Li, Lujia Wang, Aitao Zhang}}
\date{7 October 2026}
\begin{document}
\begin{abstract}
Chromatic villainy measures the largest number of vertices that must
change color to repair a rearrangement of an optimal coloring while
preserving every color multiplicity. Weak villainy allows different
multiplicities in the repair. On the odd cycle $C_{2r+1}$, we determine
the largest repair distance from arbitrary color assignments with a
prescribed color-count vector (profile) $\mathbf m$ to proper colorings
with any realizable target profile $\mathbf p$: it equals
$2r+1-\sum_X\min\{p_X,\lceil m_X/2\rceil\}$.
In particular, every feasible profile $(a,b,c)$ has ordinary and weak
repair radius $\lfloor a/2\rfloor+\lfloor b/2\rfloor+\lfloor c/2\rfloor$,
and $B(C_{2r+1})=r$ for $r\ge2$, answering a question of Clark et al.
For arbitrary complete multipartite graphs, we give an arithmetic
formula for ordinary villainy with at most $k^2$ candidates and an exact
finite formula for weak villainy, where $k$ is the number of parts.
Both values are fixed-parameter tractable in $k$ for binary-encoded
part sizes. We also determine the maximum of each parameter among all
$n$-vertex $k$-chromatic graphs. For $k\ge3$, the ordinary maximum is
$\min\{n,4(n-k)\}$, except when $n=4(n-k)-1$ or
$(n,k)\in\{(5,3),(7,3)\}$, when it is $n-1$.
For $k\ge2$, the weak maximum is
$n-\max\{\lceil n/k\rceil,2k-n\}-\varepsilon$, where
$\varepsilon\in\{0,1\}$ is determined explicitly by $n$ and $k$.
Both extrema are attained by complete multipartite graphs, and a weak
extremizer needs at most three distinct part sizes.
\end{abstract}
\maketitle
\section{Introduction}

A rearrangement of the colors of a proper coloring preserves the number
of occurrences of each color but may destroy properness. Chromatic
villainy measures the greatest distance from such a rearrangement to a
proper coloring with the same color multiplicities. More precisely, if
$G$ has chromatic number $k$, its villainy $B(G)$ is the maximum, over
all proper $k$-colorings and all rearrangements of their colors, of the
minimum number of vertices that must change color in a count-preserving
repair. Its weak villainy $B_{\mathrm w}(G)$ allows the repaired
coloring to have different multiplicities, while using the same palette
of $k$ colors. Both distances count changed vertices. They concern the
initial and final colorings, with no prescribed sequence of intermediate
recolorings.

Clark et al.~\cite{clark2006} introduced these parameters and determined
them for several basic graph classes, including connected bipartite
graphs. For odd cycles they proved
$B_{\mathrm w}(C_{2r+1})=r$ and $B(C_{2r+1})\ge r$ for $r\ge2$,
and asked whether the latter bound is always attained. They also raised
questions about complete multipartite graphs and the largest possible
villainy at fixed order and chromatic number. These questions appear in
West's collection~\cite{west2006} and in the account of Jahanbekam and
Lin~\cite{jahanbekam2021}, who characterized the graphs with villainy
two. Raleigh's thesis~\cite{raleigh2018} obtained further exact values
and bounds for complete multipartite graphs. It also treated two odd-cycle profiles
\cite[Theorems~7 and~8]{raleigh2018}.
The general odd-cycle question remained open in both that thesis and
the later paper of Jahanbekam and Lin.

We approach these problems through the \emph{profile} of a coloring (\cref{def:profile}),
the vector of its labeled color multiplicities. For a feasible profile
$\mathbf m$, let $R_G(\mathbf m)$ be the largest, over all color assignments of
profile $\mathbf m$, of the distance to a proper coloring of that same
profile. Then $B(G)=\max_{\mathbf m}R_G(\mathbf m)$, with the maximum
over feasible profiles. This viewpoint keeps track of the restriction
that distinguishes ordinary villainy from weak villainy, and allows
the admissible source and target profiles to be varied separately.
On cycles, this leads to exact agreement guarantees at prescribed
profiles. On complete multipartite graphs, the same count constraints
give an assignment problem. Completing a graph relative to an optimal
coloring then extends the multipartite description to the largest
possible values over all graphs of given order and chromatic number.

Our first result determines the full directed repair radius on an odd
cycle. For any source profile $\mathbf m$ and any feasible target
profile $\mathbf p$, \cref{odd:directed} gives
\[
 R_{C_{2r+1}}(\mathbf m\to\mathbf p)
 =2r+1-\sum_X\min\{p_X,\lceil m_X/2\rceil\}.
\]
The source profile need not itself be feasible. In fact,
\cref{odd:capped} guarantees at least
$\min\{p_X,\lceil m_X/2\rceil\}$ agreements separately for each
color $X$. A source word with each color confined to one consecutive
block gives the radius for every
target profile simultaneously, so \cref{odd:directed} also treats any
nonempty family of allowed target profiles.

Taking the same feasible profile $\mathbf m=(a,b,c)$ at source and
target yields
\[
 R_{C_{2r+1}}(\mathbf m)
 =\left\lfloor\frac a2\right\rfloor
  +\left\lfloor\frac b2\right\rfloor
  +\left\lfloor\frac c2\right\rfloor,
\]
and hence $B(C_{2r+1})=r$ for $r\ge2$, as stated in
\cref{odd:main}. The matching obstruction and the floor sum already
occur in the weak-repair argument of
Clark et al.~\cite[Proposition~3.5]{clark2006}; the upper bound here
preserves every labeled multiplicity. The ordinary and weak radii
therefore agree at every feasible source profile. The proof constructs
auxiliary targets and interpolates for proper sources, then uses
induction for arbitrary sources.

For a complete multipartite graph $K_{p_1,\ldots,p_k}$, a disturbance
is described by a nonnegative integer matrix whose row and column
sums are $\mathbf p$. A weak repair chooses a maximum-weight
assignment. An ordinary repair restricts the assignment to parts of
equal size. The relevant groups of parts are therefore the
\emph{size classes}, each consisting of all parts of one size.
The maximal-value case is known: $B(G)=|V(G)|$ exactly when no size
class contains more than half the vertices. Raleigh proved the
sufficient direction~\cite[Theorem~1]{raleigh2018}; the equivalence
also follows from Cameron and Wanless's covering-radius theorem for
permutation groups~\cite[Theorem~14]{cameronwanless2005}.
Call a size class \emph{dominant} if it contains more than half the
vertices. An additional dominant-class regime was determined by
Raleigh~\cite[Theorem~4]{raleigh2018}.

\cref{thm:mp:ordinary} gives a general arithmetic formula,
including the remaining dominant-class cases, with at most $k^2$
candidates. For weak villainy, \cref{thm:mp:weak} expresses
the answer as a finite minimum over ordered integral assignment
potentials. A strengthened exchange bounds their coordinate gaps
by differences of part sizes, up to one. Thus the explicit search
depends on the size spread rather than a common baseline.
\Cref{cor:mp:computation} also gives fixed-parameter tractability
in $k$ for binary-encoded sizes through a fixed-dimensional integer
program: the running time is $f(k)L^c$, where $L$ is the input length,
$f$ is computable, and $c$ is an absolute constant.
When all part sizes are equal, the assignment problem
is the integer doubly stochastic problem treated by Dinitz, Hartman,
and Soprunova~\cite{dinitz2012}; Gajula, Soprunov, and
Soprunova~\cite{gajula2015} studied its rectangular extension.
The formulas here permit arbitrary positive part sizes. Integral
assignment duality and transportation cuts provide a common framework
for the two repair rules, although their resulting optimizations
are different.

We also determine both extremal values
\[
 \mathcal M(n,k)=\max_{\substack{|V(G)|=n\\\chi(G)=k}}B(G),
 \qquad
 \mathcal M_{\mathrm w}(n,k)
 =\max_{\substack{|V(G)|=n\\\chi(G)=k}}B_{\mathrm w}(G).
\]
Both maxima are attained by complete multipartite graphs. The reduction
completes a graph relative to a proper coloring whose profile supplies
an extremal disturbance; the disturbance remains admissible after
completion. For $k\ge3$, the ordinary extremum is
$\min\{n,4(n-k)\}$ apart from the explicitly listed exceptions in
\cref{ext:ordinary}. For weak villainy,
\cref{ext:weak} gives, for $k\ge2$,
$\mathcal M_{\mathrm w}(n,k)=n-L-\varepsilon$, where
$L=\max\{\lceil n/k\rceil,2k-n\}$ and the correction
$\varepsilon\in\{0,1\}$ is determined by a parity obstruction.
The obstruction comes from balance of the integer disturbance matrix:
its corresponding row and column sums agree. Explicit balanced
matrices show that this is the only correction required; attaining
weak extremizers need at most three distinct part sizes.

The majority-retention theorem is related to coloring with requests
\cite{dvorak2019}: each vertex requests one color, and one seeks a
proper coloring satisfying many requests. Quantitative request-satisfaction
bounds and structural aspects of flexible list coloring are studied
in~\cite{Kaul2024,Bowdoin2025}.
Here the quotas are separate for each requested color, and the target
multiplicities are prescribed simultaneously. Prescribed color-class
sizes also arise in the sampling results of Kuchukova, Perkins, and
Povill~\cite{Kuchukova2026} and the recent existence theorem of
Birken~\cite{Birken2026}.

\cref{sec:preliminaries} gives the profile formulation and the
transportation criterion. \cref{sec:oddcycles} proves the
odd-cycle results. \cref{sec:mp,sec:extremal}
treat complete multipartite graphs and the extrema at fixed order
and chromatic number. We conclude with questions about extremal
structure, arithmetic complexity, and prescribed-profile retention.

\section{Color profiles and a transportation criterion}\label{sec:preliminaries}

All graphs are finite and simple. Unless otherwise stated, a graph $G$ is
nonempty, has order $n$, and has chromatic number $k$. We use a fixed set
$\mathcal C$ of $k$ labeled colors. For a positive integer $s$, write
$[s]=\{1,\ldots,s\}$.

\begin{definition}[Words and profiles]\label{def:profile}
A \emph{word} on $G$ is a map
$w:V(G)\to\mathcal C$, with no properness assumption. Its \emph{profile}
is the vector $\mathbf m(w)=(m_X(w))_{X\in\mathcal C}$, where
$m_X(w)=|w^{-1}(X)|$. Bold letters denote vectors. Inequalities between
vectors are coordinatewise, $\mathbf e_X$ is a coordinate unit vector,
and $\mathbf1$ is an all-one vector of the required dimension.

Write $\mathcal P(G)$ for the proper colorings with palette $\mathcal C$.
Since $|\mathcal C|=\chi(G)$, each such coloring uses every color. A profile
$\mathbf m$ is \emph{feasible} if it occurs in $\mathcal P(G)$.
\end{definition}

For a feasible profile, put
\[
 \mathcal W_{\mathbf m}(G)=
 \{w:V(G)\to\mathcal C:\mathbf m(w)=\mathbf m\},\qquad
 \mathcal P_{\mathbf m}(G)=\mathcal W_{\mathbf m}(G)\cap\mathcal P(G).
\]
We omit $G$ from $\mathcal W_{\mathbf m}(G)$ when the vertex set is clear.
Every word in $\mathcal W_{\mathbf m}(G)$ is a rearrangement of every
proper coloring with that profile; we call it a \emph{legal disturbance}.
More generally, we call the given word the \emph{source} and a proper
coloring used to repair it the \emph{target}. An \emph{agreement} is a
vertex on which source and target have the same color.
The Hamming distance and the number of agreements are
\[
 d_{\mathrm H}(w,y)=|\{v:w(v)\ne y(v)\}|,
 \qquad \operatorname{agr}(w,y)=n-d_{\mathrm H}(w,y).
\]
The cost therefore counts changed vertices; exchanging two different
colors on two vertices has cost two.

\begin{definition}[Repair radii and villainy]\label{def:repair-radii}
The repair radius of a profile and its weak counterpart are
\begin{align*}
 R_G(\mathbf m)&=
 \max_{w\in\mathcal W_{\mathbf m}(G)}
 \min_{y\in\mathcal P_{\mathbf m}(G)}d_{\mathrm H}(w,y),\\
 R^{\mathrm w}_G(\mathbf m)&=
 \max_{w\in\mathcal W_{\mathbf m}(G)}
 \min_{y\in\mathcal P(G)}d_{\mathrm H}(w,y).
\end{align*}
With the terminology of Clark et al.~\cite{clark2006},
\[
 B(G)=\max_{\mathbf m\text{ feasible}}R_G(\mathbf m),\qquad
 B_{\mathrm w}(G)=\max_{\mathbf m\text{ feasible}}R^{\mathrm w}_G(\mathbf m).
\]
Both parameters are zero on the empty graph by convention.
\end{definition}

We also call $B(G)$ \emph{ordinary villainy}, to distinguish it from
weak villainy.

The inclusion
of the two target sets gives $B_{\mathrm w}(G)\le B(G)$. In particular,
the weak target may change the profile, but it still uses the same palette
of $\chi(G)$ colors. A simultaneous permutation of the color labels in
the source and target preserves distances and all profile conditions.

We use the following integral transportation criterion, a classical
consequence of the max-flow theorem~\cite{fordfulkerson1956}.
Its short proof is included because the precise capacity inequalities
are used both for gap allocations on cycles and for multipartite graphs.

For a vector $\mathbf a$ indexed by a finite set, write
$a(I)=\sum_{i\in I}a_i$.
\Needspace{6\baselineskip}
\begin{lemma}[Integral transportation criterion]\label{lem:transport}
Let $\mathbf a\in\mathbb Z_{\ge0}^{h}$ and
$\mathbf b\in\mathbb Z_{\ge0}^{j}$ have the same total $N$, and let
$c_{i\ell}$ be nonnegative integers. There is a nonnegative integer
matrix $M$ with row sums $\mathbf a$, column sums $\mathbf b$, and
$m_{i\ell}\le c_{i\ell}$ if and only if
\begin{equation}\label{eq:transport-cut}
 a(I)+b(J)-N\le\sum_{i\in I,\ell\in J}c_{i\ell}
 \qquad(I\subseteq[h],\ J\subseteq[j]).
\end{equation}
\end{lemma}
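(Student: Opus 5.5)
We reduce the statement to the max-flow min-cut theorem on a bipartite network. Build a network with a source $s$, a sink $t$, row vertices $r_1,\dots,r_h$ and column vertices $q_1,\dots,q_j$. It has arcs $s\to r_i$ of capacity $a_i$, arcs $q_\ell\to t$ of capacity $b_\ell$, and arcs $r_i\to q_\ell$ of capacity $c_{i\ell}$. A matrix $M$ as in the statement is exactly an integral $s$--$t$ flow of value $N$, obtained by setting $m_{i\ell}$ equal to the flow on $r_i\to q_\ell$. Conservation at $r_i$ and $q_\ell$ gives the row and column sums, and saturation of the source and sink arcs is forced by the value $N$. By the integrality part of the max-flow theorem~\cite{fordfulkerson1956}, such a matrix exists if and only if every $s$--$t$ cut has capacity at least $N$.

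For necessity we argue directly. Fix $I\subseteq[h]$ and $J\subseteq[j]$. Summing entries, the rows in $I$ carry $a(I)$ and the columns in $J$ carry $b(J)$. The entries lying outside both $I$ and $J$ are nonnegative, so the total $N$ is at least $a(I)+b(J)-\sum_{i\in I,\ell\in J}m_{i\ell}$. Rearranging and using $m_{i\ell}\le c_{i\ell}$ gives \eqref{eq:transport-cut}.

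For sufficiency we compute cut capacities. Let an $s$--$t$ cut have source side $\{s\}\cup\{r_i:i\in I'\}\cup\{q_\ell:\ell\in J'\}$. Its capacity is
\[
 a([h]\setminus I')+\sum_{i\in I',\,\ell\notin J'}c_{i\ell}+b(J').
\]
Put $I=I'$ and $J=[j]\setminus J'$. The capacity then equals
\[
 N-a(I)+\sum_{i\in I,\ell\in J}c_{i\ell}+N-b(J).
\]
This is at least $N$ precisely when $a(I)+b(J)-N\le\sum_{I\times J}c_{i\ell}$. As $I'$ and $J'$ range over all subsets, so do $I$ and $J$. Hence \eqref{eq:transport-cut} says that every cut has capacity at least $N$, so the maximum flow is $N$. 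An integral maximum flow yields $M$.

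The only delicate step is the bookkeeping in the cut-capacity formula. One must get the complement $J=[j]\setminus J'$ right and include the arcs $r_i\to q_\ell$ with $i\in I'$ and $q_\ell$ on the sink side. After that, the equivalence is a direct rearrangement, and integrality is inherited from the augmenting-path form of the max-flow theorem.
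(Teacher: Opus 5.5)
Your proof is correct and follows the paper's argument: you use the same source--rows--columns--sink network, obtain the same cut capacity $2N-a(I)-b(J)+\sum_{i\in I,\ell\in J}c_{i\ell}$, and give an equivalent counting argument for necessity. The only difference is that you cite the integrality part of the max-flow min-cut theorem, whereas the paper sketches the augmenting-path argument itself to keep that step self-contained.
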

\begin{proof}
At most $N-b(J)$ units from rows in $I$ can lie outside the columns in
$J$. This proves necessity. For sufficiency, form a network whose arcs
from the source to row $i$, from row $i$ to column $\ell$, and from column
$\ell$ to the sink have capacities $a_i,c_{i\ell},b_\ell$, respectively.
A cut with source-side rows $I$ and sink-side columns $J$ has capacity
\[
 2N-a(I)-b(J)+\sum_{i\in I,\ell\in J}c_{i\ell}.
\]
The assumed inequalities make every cut have capacity at least $N$.
Starting with zero flow, augment along residual source--sink paths by
positive integers. The value increases at every step and is bounded by
$N$. If augmentation stops before value $N$, the vertices reachable from
the source determine a cut whose capacity is the current value, a
contradiction. An integral flow of value $N$ saturates all source and sink
arcs. Its row-to-column values give the required matrix.
\end{proof}

The same augmenting-path argument applies to every integral network used
below. In particular, when the row and column bounds are upper bounds
rather than prescribed sums, the largest possible integral total is the
minimum cut capacity.

\section{Odd cycles and prescribed color multiplicities}\label{sec:oddcycles}

Let $n=2r+1$, and use the three labeled colors
$\mathcal C=\{A,B,C\}$. We identify a coloring of $C_n$ with a
cyclic word $w$. Its profile is
$\mathbf m=(m_A,m_B,m_C)$, where
$m_X=|w^{-1}(X)|$. The profile is \emph{feasible} if some proper
coloring has these multiplicities.
We write $X^a$ for a consecutive block of $a$ copies of color $X$,
with the block empty when $a=0$.

\begin{theorem}\label{odd:main}
For every feasible profile $\mathbf m=(a,b,c)$ of $C_{2r+1}$,
\begin{equation}\label{odd:radius}
R_{C_{2r+1}}(\mathbf m)
=\left\lfloor\frac a2\right\rfloor
 +\left\lfloor\frac b2\right\rfloor
 +\left\lfloor\frac c2\right\rfloor
=\begin{cases}
r-1,&a,b,c\text{ are all odd},\\
r,&\text{exactly one of }a,b,c\text{ is odd}.
\end{cases}
\end{equation}
Consequently,
$B(C_{2r+1})=r$ for $r\ge2$, and $B(C_3)=0$.
\end{theorem}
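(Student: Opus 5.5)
The proof has two halves. The lower bound comes from a block word and a counting argument along arcs. The upper bound needs a proper target of profile $\mathbf m$ that agrees with an arbitrary source of profile $\mathbf m$ on at least $\sum_X\lceil m_X/2\rceil$ vertices.

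First the arithmetic. Feasibility forces $a+b+c=2r+1$ and $a,b,c\le r$, and an odd total allows either all three odd or exactly one odd. If all are odd, $\sum\lfloor m_X/2\rfloor=(2r+1-3)/2=r-1$. If exactly one is odd, the sum is $(2r+1-1)/2=r$. Every odd cycle with $r\ge2$ has the feasible profile $(r,r,1)$, with exactly one odd entry when $r$ is even, and $(r-1,r,2)$, with exactly one odd entry when $r$ is odd. Hence $B(C_{2r+1})=\max_{\mathbf m}R(\mathbf m)=r$. For $C_3$ the only feasible profile is $(1,1,1)$, which gives $0$. It therefore suffices to prove \eqref{odd:radius}, in the equivalent form: the maximum over sources $w$ of profile $\mathbf m$ of the largest agreement with a proper target of profile $\mathbf m$ is at least $\sum_X\lceil m_X/2\rceil$ for every $w$, and exactly that for some $w$.

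\emph{Lower bound on $R$.} Take the source $w=A^aB^bC^c$. In any proper coloring $y$, the color $X$ occupies an independent set of the cycle. Its intersection with the block $X^{m_X}$, which is a path on $m_X$ vertices, therefore has at most $\lceil m_X/2\rceil$ vertices. So $\operatorname{agr}(w,y)\le\sum_X\lceil m_X/2\rceil$, and $d_{\mathrm H}(w,y)\ge 2r+1-\sum_X\lceil m_X/2\rceil=\sum_X\lfloor m_X/2\rfloor$.

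\emph{Upper bound on $R$.} Fix an arbitrary $w$ of profile $\mathbf m$. Since $m_X\le r<n$, the set $w^{-1}(X)$ induces a disjoint union of paths, so it contains an independent set $S_X$ with $|S_X|=\lceil m_X/2\rceil$. Precolor $S=S_A\cup S_B\cup S_C$ according to $w$; this precoloring is proper because each $S_X$ is independent. The complement of $S$ splits into gaps, and each gap is a path of length $g\ge0$ whose two end neighbours carry prescribed colors. The remaining demands are $m_X-|S_X|=\lfloor m_X/2\rfloor$.

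For one gap, the achievable triples of counts can be written down explicitly. Roughly, each color gets at most $\lceil g/2\rceil$, with a correction of one depending on whether the color matches an endpoint. In addition, a gap of length $0$ whose two endpoints have the same color is forbidden, and a gap of even length whose endpoints have equal colors needs a third color. Then apply \cref{lem:transport} to the matrix with rows indexed by gaps, columns by colors, and capacities given by these per-gap bounds. I would check the cut inequalities \eqref{eq:transport-cut} using only $m_X\le r$ and the parity of the total. If some inequality fails, I would repair it by re-choosing the sets $S_X$ (shifting a chosen vertex within a run of $w^{-1}(X)$, or trading between runs) rather than accepting fewer agreements. An alternative fallback is to induct on the number of runs of $w$: merge two runs by a transposition and track how the maximum agreement changes, with the block word as the base case.

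\emph{Main obstacle.} The difficulty is that the per-gap capacities are not independent: the local obstructions (adjacent equal precolored vertices, even gaps with equal ends) interact with the global counts. The transportation cut conditions can fail for a bad choice of $S$. The key step is therefore to show that $S$ can always be chosen, for instance by alternating positions inside each run and shifting the parity of a single run when necessary, so that every cut condition holds. I expect the tight cases to be those with some $m_X=r$ and the all-odd case, and I would treat these separately.
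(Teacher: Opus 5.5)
\paragraph{Verdict.} Your arithmetic and the block-word lower bound are correct and coincide with the paper's argument. There is a genuine gap in the upper bound, which carries the whole substance of the theorem.

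\paragraph{The missing step.} You must show that \emph{every} source $w$ of profile $\mathbf m$ has a proper target of profile $\mathbf m$ retaining $\lceil m_X/2\rceil$ vertices of each color. Your proposal only gives a way to test a candidate. For a fixed precolored independent set $S$, extendability to profile $\mathbf m$ is indeed decided by the per-gap capacities $\lfloor(\ell_g+1-e_{gX})/2\rfloor$ and the transportation criterion; this is \cref{odd:gap} and the allocation inequality behind \cref{odd:interpolation}. But the existence of a good $S$ is equivalent to the theorem itself, and ``if a cut fails, re-choose $S$'' is not an argument.

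Bad choices really occur, and the obstruction is global. Take $C_7$ with $w=C\,A\,A\,B\,B\,A\,B$ on positions $1,\dots,7$ and $\mathbf m=(3,3,1)$. The only target $C$ must be at position $1$, so any target alternates $A,B$ on the path $2,\dots,7$. The independent choice $S_A=\{3,6\}\subseteq w^{-1}(A)$ meets both parity classes and has no extension. By contrast, $S_A=\{2,6\}$, $S_B=\{5,7\}$ extends to $C\,A\,B\,A\,B\,A\,B$. Choosing the parity within each run is not a local matter, because an odd cycle has no consistent parity. The paper resolves exactly this by choosing a root that balances a color class between the two parity classes of the remaining path (\cref{odd:root}), and then taking the better of the two alternations (\cref{odd:binary}). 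Your suggestion of ``shifting the parity of a single run'' provides no such coordination across runs and colors.

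\paragraph{What the paper does instead.} The paper's proof rests on a strengthening that can be inducted on. \Cref{odd:quota} allows an arbitrary source (possibly improper, possibly with zero counts) and any feasible target profile $\mathbf p$, with quotas $\min\{p_X,\lceil m_X/2\rceil\}$.

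An improper source contains an equal adjacent pair $AA$. Deleting it and prescribing $\mathbf p-\mathbf e_A-\mathbf e_B$ for a slack color $B$ gives an instance on $C_{2r-1}$. Reinserting the pair as $AB$ or $BA$ restores the profile and one $A$ agreement. This step shifts the source by $2\mathbf e_A$ but the target by $\mathbf e_A+\mathbf e_B$, so a same-profile statement like yours cannot serve as the induction hypothesis.

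The induction step must also handle targets with a coordinate equal to $1$ (\cref{odd:singleton}) and proper sources with $\mathbf p\ne\mathbf m$. For the latter, the paper builds auxiliary targets: a saturated one from the independent set of \cref{odd:fair}, or one with a deficient coordinate via \cref{odd:lower}. It then interpolates toward $\mathbf p$ while keeping a chosen set of agreements fixed. This is where the gap-capacity machinery you describe is actually used: to show that extension profiles form a box, not to find $S$ from scratch.

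\paragraph{Your fallback.} Inducting on the number of runs has the same problem. You identify no monotone quantity, and showing that the best agreement never falls below $\sum_X\lceil m_X/2\rceil$ along a sequence of transpositions is just the original claim.
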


Clark et al.~\cite[Proposition~3.5 and Corollary~3.6]{clark2006}
proved $B_{\mathrm w}(C_{2r+1})=r$ and $B(C_{2r+1})\ge r$ for $r\ge2$.
The floor sum in~\eqref{odd:radius} also occurs in their argument.
Raleigh~\cite[Theorems~7--8]{raleigh2018} proved upper bounds for the
profiles $(r,r,1)$ and $(r,r-1,2)$, respectively. Her first proof
already gives $2\lfloor r/2\rfloor$, and the second gives $r$;
these agree with~\eqref{odd:radius}. They are profile-restricted
statements, not the full odd-cycle formula.
The issue addressed here is the requirement that the repaired coloring
preserve each labeled color multiplicity. We prove a stronger assertion
in which the target multiplicities may be prescribed independently of
the source multiplicities.

\subsection{Profiles and agreement quotas}

\begin{lemma}\label{odd:feasible}
A nonnegative integer vector $\mathbf m$ is feasible on $C_{2r+1}$
if and only if
\begin{equation}\label{odd:feasibleeq}
\sum_{X\in\mathcal C}m_X=2r+1,
\qquad 1\le m_X\le r\quad(X\in\mathcal C).
\end{equation}
\end{lemma}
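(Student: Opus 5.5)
Necessity is quick. If $y$ is a proper coloring of $C_{2r+1}$, each color class is an independent set of the cycle, so $m_X\le\alpha(C_{2r+1})=r$. The sum condition is immediate since $y$ colors all $2r+1$ vertices. Each $m_X\ge1$ because, as noted after \cref{def:profile}, a proper coloring with a palette of exactly $\chi(C_{2r+1})=3$ colors must use every color: otherwise it would be a proper $2$-coloring of an odd cycle. The bound $m_X\le r$ could also be seen by counting: each vertex of color $X$ is followed cyclically by a vertex of a different color, and these successors are distinct, so $2m_X\le 2r+1$.

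For sufficiency, suppose $\mathbf m=(a,b,c)$ satisfies \eqref{odd:feasibleeq}, and relabel the colors so that $a\ge b\ge c\ge1$. Since $a\le r$, we have $b+c=2r+1-a\ge r+1\ge a+1$. I will build the cyclic word explicitly as
\[
(AB)^{a-t}\,(AC)^{t}\,(BC)^{s}\,C^{\,c'} ,
\]
with the parameters chosen to realize the counts. A cleaner way to choose them is to place the $a$ copies of $A$ alternately first and then fill the gaps:
\begin{itemize}
\item Write $A\,\square\,A\,\square\cdots A\,\square$ with $a$ gaps around the cycle.
\item The gaps must absorb the remaining $b+c=2r+1-a\ge a+1$ vertices.
\item Each gap receives a nonempty word in $\{B,C\}$ alternating between $B$ and $C$, so no two equal letters are adjacent inside a gap, and gap boundaries meet $A$ on both sides.
\end{itemize}

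It then suffices to distribute $b$ copies of $B$ and $c$ copies of $C$ into $a$ nonempty alternating gap words. A gap of length $\ell$ carries $\lceil \ell/2\rceil$ of one letter and $\lfloor\ell/2\rfloor$ of the other. I will take most gaps to be single letters and one long gap:
\begin{itemize}
\item Use $a-1$ gaps of length one, with $B$ in $\min\{a-1,b\}$ of them and $C$ in the rest.
\item Put the remaining letters in one last gap, which must be an alternating word in $B$ and $C$.
\end{itemize}
This works provided the remaining counts of $B$ and $C$ differ by at most one and their total is positive. The last condition holds since $b+c\ge a+1$.

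The main obstacle is balancing the counts in the long gap. To handle it, I would instead use an inductive or greedy argument. Repeatedly place the currently most frequent remaining color next, never repeating the previous color. Close the cycle using the inequality $\max_X m_X\le r<(2r+1)/2$, which guarantees that the most frequent color never exceeds the total of the other two. This is the standard argument for arranging a multiset cyclically with no two equal neighbors. The cyclic closure needs care, namely ensuring the first and last letters differ. I would handle it by starting with the most frequent color $A$ and noting that the greedy order ends with a non-$A$ letter whenever $a\le b+c$, which holds strictly here. Since $c\ge1$, all three colors occur, and the resulting word is a proper coloring of profile $\mathbf m$.
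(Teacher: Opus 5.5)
Your necessity argument is correct and matches the paper's. The gap is in sufficiency, the only substantive direction, because none of your three constructions is carried through.

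\begin{itemize}
\item The block word $(AB)^{a-t}(AC)^t(BC)^sC^{c'}$ is left without parameters.
\item The singleton-gap distribution genuinely fails. For $r=4$ and $\mathbf m=(3,3,3)$ it puts $B$ in both singleton gaps and leaves one $B$ and three $C$'s for the last gap, which cannot alternate.
\item The greedy argument you finally rely on hinges on the claim that the greedy word ends with a letter other than $A$. You give no proof, and as stated the claim is false, because ties are not resolved. For $\mathbf m=(2,2,1)$, start with $A$, then take $B$ (two copies left). Now $A$ and $C$ tie, so take $C$; then $A$ and $B$ tie, so take $B$; the last letter is $A$. The word $ABCBA$ is proper as a path but not as a cyclic word, since both ends are $A$.
\end{itemize}
Also, the inequality $\max_X m_X\le r$ concerns only the initial counts. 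That the greedy never gets stuck is true, but it needs an invariant on the remaining counts, which you do not supply.

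Your gap picture can be repaired by using only gaps of length one and two. With $a\ge b\ge c$ one has $a+1\le b+c\le 2a$. Fill $2r+1-2a\ge1$ gaps with $BC$, $a-c$ gaps with $B$, and $a-b$ gaps with $C$; that is, take the cyclic word
\[
 (ABC)^{2r+1-2a}(AB)^{a-c}(AC)^{a-b}.
\]
This works for three reasons:
\begin{itemize}
\item The number of gaps is $(2r+1-2a)+(a-c)+(a-b)=a$, one per copy of $A$.
\item The letter counts are $a$, $(2r+1-2a)+(a-c)=b$, and $(2r+1-2a)+(a-b)=c$.
\item Every block begins with $A$ and ends with $B$ or $C$, so every junction, including the cyclic one, is proper.
\end{itemize}
This word is an explicit Euler circuit of the paper's multigraph: it uses the edge opposite each color $Z$ exactly $2r+1-2m_Z$ times. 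The paper instead applies Euler's theorem directly to that connected multigraph with degrees $2m_X$. That avoids ordering the counts and the need to exhibit the circuit.
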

\begin{proof}
Each color class of a proper coloring is independent and therefore has
at most $r$ vertices. All three colors occur, since an odd cycle is not
bipartite. This proves necessity.

Conversely, suppose~\eqref{odd:feasibleeq} holds. Form a loopless
multigraph on $A,B,C$ in which the multiplicity of the edge $AB$ is
$m_A+m_B-m_C$, and define the other two multiplicities cyclically.
For example, the multiplicity of $AB$ is $2r+1-2m_C\ge1$.
Thus this multigraph is connected, has $2r+1$ edges, and has degree
$2m_X$ at $X$. It has an Euler circuit. Reading the successive vertices
of this circuit, with the final repeated vertex omitted, gives a cyclic
word in which consecutive letters differ and $X$ occurs $m_X$ times.
\end{proof}

For an arbitrary source word $w$, define its half-count requirements by
\[
\mathbf q=(q_A,q_B,q_C),
\qquad q_X=\left\lceil\frac{m_X}{2}\right\rceil.
\]
The source need not be proper, and some of its counts may be zero.
For a feasible target profile $\mathbf p$, the corresponding agreement
quotas are
\[
\mathbf t=(t_A,t_B,t_C),\qquad t_X=\min\{p_X,q_X\}.
\]
The target count caps a color's half-count requirement when fewer
vertices of that color are available in the target.

\begin{theorem}[Prescribed multiplicities and capped majority retention]
\label{odd:quota}\label{odd:capped}
Let $w$ be a word on $C_{2r+1}$, where $r\ge1$, with profile
$\mathbf m$. If an integer vector $\mathbf p$ satisfies
\begin{equation}\label{odd:target}
\sum_{X\in\mathcal C}p_X=2r+1,
\qquad 1\le p_X\le r
\quad(X\in\mathcal C),
\end{equation}
then there is a proper coloring $y$ of profile $\mathbf p$ such that
\begin{equation}\label{odd:agreements}
|\{v:w_v=y_v=X\}|\ge t_X=\min\{p_X,q_X\}
\quad(X\in\mathcal C).
\end{equation}
\end{theorem}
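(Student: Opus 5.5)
The plan is to fix the agreements first and then fill the rest of the cycle. For each color $X$ we select a set $S_X\subseteq w^{-1}(X)$ of exactly $t_X$ vertices, pairwise nonadjacent, and we set $y=X$ on $S_X$. Such a set exists. Indeed, $w^{-1}(X)$ is a union of maximal runs of lengths $\ell_1,\ldots,\ell_s$, and taking alternate vertices in each run gives $\sum_i\lceil \ell_i/2\rceil\ge\lceil m_X/2\rceil$ independent vertices. The only exception is $m_X=2r+1$, where the run is the whole cycle and only $r$ are available; this is harmless because $t_X\le p_X\le r$. Adjacent precolored vertices in different sets $S_X,S_Y$ already have different colors, so the partial coloring on $S=\bigcup_X S_X$ is proper.

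It remains to extend it to a proper coloring of profile $\mathbf p$. The complement of $S$ splits into gaps: paths of $g\ge0$ interior vertices whose two boundary colors are fixed. (If $S=\emptyset$, \cref{odd:feasible} applied to $\mathbf p$ finishes directly.) The first step is to describe the count vectors $(u_A,u_B,u_C)$ realizable on one gap as a proper path coloring compatible with its endpoints. I expect a short list of linear constraints: $u_Z\le\lceil g/2\rceil$ for each $Z$, together with adjustments of $\pm1$ depending on whether the boundary colors are equal and on the parity of $g$. For example, a gap of length $1$ between $X\ne Y$ forces the third color, while a gap of length $1$ between $X$ and $X$ allows two choices.

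The second step is to assign the residual demand $\mathbf p-\mathbf t$ to the gaps. This is a transportation problem with rows indexed by colors and columns indexed by gaps. The gap totals are fixed, but the per-gap vectors are not simply capacitated. I would therefore encode each gap by caps $c_{Z,\text{gap}}$ that make every capped vector realizable, possibly after a local exchange, and then apply \cref{lem:transport}. This reduces the problem to checking the cut inequalities~\eqref{eq:transport-cut}. Those inequalities should follow from $p_Z\le r$, $\sum p=2r+1$, and the fact that in an interval of the cycle a color can occupy at most about half of the positions.

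The main obstacle is the choice of $S_X$ when $t_X=p_X<q_X$, or when runs of different colors interleave badly. A poor choice can create many short gaps with forced colors and violate a cut inequality; the extreme case is $\mathbf t=\mathbf p$ with the sets $S_X$ packed tightly. I would first try an extremal choice, for instance one that minimizes the number of odd-length gaps with equal endpoint colors, and then show that any violated cut lets us shift an element of some $S_X$ to another vertex of the same run. If that becomes unwieldy, I would fall back on the two-stage route suggested in the introduction. First treat proper sources $w$, interpolating between auxiliary targets (rotations and block recolorings of $w$) whose profiles move by unit vectors $\mathbf e_X-\mathbf e_Y$ toward $\mathbf p$, while keeping the agreement counts above $\mathbf t$. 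Then handle arbitrary sources by induction on the number of monochromatic edges. Recoloring one endpoint of such an edge lowers some $m_X$ by at most one, so it changes $q_X$ by at most one. The inductive target must then be repaired at a single vertex without losing the quota.
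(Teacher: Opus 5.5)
Your proposal has a genuine gap. Everything rests on the step you leave open: choosing the sets $S_X$ so that the partial coloring extends with profile exactly $\mathbf p$. That step is the whole difficulty of the theorem, and it really does depend on the choice. On $C_5$, take the proper source $w=(A,B,A,B,C)$ and $\mathbf p=(2,1,2)$, so $\mathbf t=(1,1,1)$.
\begin{itemize}
\item The choice $S_A=\{v_3\}$, $S_B=\{v_2\}$, $S_C=\{v_5\}$ forces $v_4=B$ and $v_1=A$, and returns profile $(2,2,1)$.
\item The choice $S_A=\{v_1\}$, with the same $S_B,S_C$, gives $(A,B,C,A,C)$, which works.
\end{itemize}

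The gaps are not the obstacle. With three colors, \cref{odd:gap} shows that the realizable gap vectors are exactly those of total $\ell$ with $0\le x_X\le\lfloor(\ell+1-e_X)/2\rfloor$. So they are simply capacitated, contrary to your worry, and the extension profiles of a fixed $S$ form a box $L_X\le p_X\le U_X$ (\cref{odd:interpolation}). The real problem is landing $\mathbf p$ inside that box, and an unspecified extremal choice plus a shifting step does not show this.

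The paper never checks cut inequalities for a hand-picked $S$. For a proper source it proceeds as follows.
\begin{itemize}
\item It first builds a proper coloring $z$ meeting the quotas at an extreme profile $\mathbf P$: either $P_A=r$, via the independent $r$-set of \cref{odd:fair}, or $P_A=q_A$, via \cref{odd:root,odd:binary}.
\item It takes $K$ to be exactly $q_X$ common agreements of $w$ and $z$ in each color.
\item Both $\mathbf m$ (because $w$ is proper) and $\mathbf P$ lie in the box of $K$, with $\mathbf p$ coordinatewise between them, so interpolation gives the target.
\end{itemize}
Your fallback of unit steps through rotations or block recolorings supplies neither such endpoints nor a reason why the agreements survive each step.

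The fallback for improper sources has a separate gap. Suppose you recolor an endpoint $u$ of an $XX$ edge to $Y$ and apply induction. The returned $y$ may have $y(u)=Y$, which loses a $Y$ agreement with $w$. When $m_X$ is odd, it may also be one $X$ agreement short. Changing $y$ at a single vertex destroys the prescribed profile $\mathbf p$. You would therefore need a properness-preserving exchange on at least two vertices that also keeps every other quota, and none is given.

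The paper instead inducts on the length of the cycle. It deletes an equal pair $AA$ and passes to $C_{n-2}$ with target $\mathbf p-\mathbf e_A-\mathbf e_B$. Here $B$ is a slack color ($p_B>q_B$), chosen so that any color with $p_X=r$ is $A$ or $B$. Then $\mathbf t'=\mathbf t-\mathbf e_A$. Reinserting the pair as $AB$ or $BA$, one of which is proper, restores both the profile and the $A$ quota. Two cases are settled beforehand: targets with a coordinate equal to $1$ (\cref{odd:singleton}), since the reduction could otherwise create a zero target count, and proper sources, by the interpolation argument above.
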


For distinct colors $X,Y$, we shall use the inequalities
\begin{equation}\label{odd:pair-quota}
q_X+q_Y\le r+1\le p_X+p_Y.
\end{equation}
The first follows from $m_X+m_Y\le2r+1$, and the second from the
upper bound $r$ on the third target count. In particular, if
$p_A<q_A$, both other target counts strictly exceed their half-count
requirements. We call a target coordinate $X$ \emph{deficient} if
$p_X<q_X$ and \emph{slack} if $p_X>q_X$.
At most one coordinate can be deficient.

We first deduce the profile-radius formula, assuming this theorem.

\begin{proof}[Proof of \cref{odd:main}]
For a feasible profile $\mathbf m$, set $\mathbf p=\mathbf m$ in
\cref{odd:quota}. Its hypotheses hold by
\cref{odd:feasible}, and then $t_X=q_X$ for every color.
Every source word of profile $\mathbf m$
has a proper repair of the same profile retaining at least
$\sum_X\lceil m_X/2\rceil$ vertices. Hence
\[
R_{C_{2r+1}}(\mathbf m)
\le\sum_{X\in\mathcal C}\left\lfloor\frac{m_X}{2}\right\rfloor.
\]
The matching lower bound is the three-block obstruction in the proof
of Clark et al.~\cite[Proposition~3.5]{clark2006}: for the source
$A^aB^bC^c$, every proper coloring differs on at least
$\lfloor a/2\rfloor+\lfloor b/2\rfloor+\lfloor c/2\rfloor$
vertices, even when target multiplicities are unrestricted.
It applies in particular to the smaller family of proper targets of
profile $\mathbf m$, proving equality.

If $o\in\{1,3\}$ is the number of odd entries of $\mathbf m$, then
\[
\sum_X\left\lfloor\frac{m_X}{2}\right\rfloor
=\frac{2r+1-o}{2}.
\]
This gives the two cases in~\eqref{odd:radius}. The value is always
at most $r$. For even $r\ge2$, the feasible profile $(r,r,1)$ attains
$r$; for odd $r\ge3$, the feasible profile $(r,r-1,2)$ does so.
When $r=1$, the sole feasible profile is $(1,1,1)$, and every
rearrangement is already proper.
\end{proof}

There are two parts to the proof of \cref{odd:quota}. We first treat proper sources by
constructing suitable auxiliary profiles and interpolating while keeping
a specified set of agreements fixed. We then remove an equal adjacent
pair from an improper source and apply induction. Two elementary
observations about alternating colorings will also be useful.

\Needspace{8\baselineskip}
\begin{lemma}\label{odd:binary}
Suppose a set whose vertices are colored $B$ or $C$ is partitioned
into $E$ and $O$, where $\bigl||E|-|O|\bigr|\le1$.
Of the two assignments
\[
(B\text{ on }E,\ C\text{ on }O)
\quad\text{and}\quad
(C\text{ on }E,\ B\text{ on }O),
\]
one retains at least the ceiling of half of the original vertices of
each color.
\end{lemma}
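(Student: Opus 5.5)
Let $\beta$ and $\gamma$ be the numbers of original $B$- and $C$-vertices, so $\beta+\gamma=|E|+|O|$. For each color I count what is retained under each assignment, and then show that one assignment meets both ceiling thresholds at once. Put $\beta_E=|\{v\in E:\text{colored }B\}|$ and $\beta_O=\beta-\beta_E$, and define $\gamma_E,\gamma_O$ in the same way. The first assignment retains $\beta_E$ vertices of color $B$ and $\gamma_O$ of color $C$. The second retains $\beta_O$ of color $B$ and $\gamma_E$ of color $C$. Since $\beta_E+\beta_O=\beta$, one of the two assignments retains at least $\lceil\beta/2\rceil$ vertices of color $B$, and likewise for $C$. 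The task is to show that the same assignment works for both colors.

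The key relation is $\gamma_E=|E|-\beta_E$ and $\gamma_O=|O|-\beta_O$. It follows that
\[
(\beta_E-\beta_O)+(\gamma_O-\gamma_E)=2(\beta_E-\beta_O)-(|E|-|O|),
\]
and equivalently $\gamma_O-\gamma_E=(\beta_E-\beta_O)-\delta$ with $\delta=|E|-|O|\in\{-1,0,1\}$. So the two differences $D_B=\beta_E-\beta_O$ and $D_C=\gamma_O-\gamma_E$ differ by at most one. The first assignment meets the $B$ threshold exactly when $D_B\ge0$ or $D_B=-1$ with $\beta$ odd. This is because $\beta_E\ge\lceil\beta/2\rceil$ if and only if $2\beta_E\ge\beta$ or $2\beta_E=\beta-1$; more precisely, $\beta_E\ge\lceil\beta/2\rceil$ if and only if $D_B\ge 0$ when $\beta$ is even, and $D_B\ge1$ when $\beta$ is odd. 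The parities of $D_B$ and $\beta$ agree. The analogous statements hold for $C$ and for the second assignment, which needs $D_B\le0$ or $D_B\le-1$ depending on parity.

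I then finish by cases on $\delta$. If $\delta=0$, then $D_C=D_B$, and $\beta$ and $\gamma$ have the same parity. In that case the sign of $D_B$ selects one assignment that works for both colors, and when $D_B=0$ both parities are even, so both assignments work. If $\delta=\pm1$, then $\beta+\gamma$ is odd, so exactly one of $\beta,\gamma$ is odd, and $D_C=D_B\mp1$. Here I check that the odd color's strict requirement of $|D|\ge1$ on the correct side is compatible with the even color's weak requirement. For example, when $\delta=1$ we have $D_C=D_B-1$. If $D_B\ge1$, the first assignment works for $B$, and $D_C\ge0$ works for $C$, which must be even because $D_B$ is odd when $\beta$ is odd, and so on.

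The only real obstacle is this parity bookkeeping in the case $|E|\ne|O|$. I would organize it by noting that $D_B\equiv\beta$ and $D_C\equiv\gamma\pmod 2$. With that, each requirement becomes simply ``$D_X\ge0$'' or ``$D_X\le0$'' when the integer $D_X$ has the right parity, and the one-unit shift between $D_B$ and $D_C$ can never separate them across zero in an incompatible way.
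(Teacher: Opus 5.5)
Your proof is correct and is essentially the paper's: the identity $D_B-D_C=|E|-|O|\in\{-1,0,1\}$ means the two differences cannot have opposite strict signs, and since $\beta_E\ge\lceil\beta/2\rceil$ holds if and only if $D_B\ge0$ (no parity caveat needed, and likewise for $C$), one assignment meets both thresholds, so your casework on $\delta$ is unnecessary. Do delete your first-stated criterion ``$D_B\ge0$ or $D_B=-1$ with $\beta$ odd,'' which is false, since $D_B=-1$ gives $\beta_E=\lfloor\beta/2\rfloor$; the corrected version you give immediately afterwards is the right one.
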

\begin{proof}
Write $b_E,b_O,c_E,c_O$ for the original color counts in the two
parts. The first assignment retains $b_E$ vertices of color $B$ and
$c_O$ vertices of color $C$; the second retains $b_O$ and $c_E$,
respectively. Their agreement differences are
\[
d_B=b_E-b_O,\qquad d_C=c_O-c_E.
\]
Since $d_B-d_C=|E|-|O|$, the integers $d_B,d_C$ cannot have opposite
strict signs. Consequently one assignment is at least as good as the
other for both colors. For each color, its two possible agreement
counts sum to its original multiplicity, so the better count is at
least the ceiling of half that multiplicity.
\end{proof}

To apply this alternating choice after deleting a vertex of the cycle,
we need a root that balances the original vertices of one color between
the two parity classes of the remaining path, namely the vertices in
its odd and even positions.

\Needspace{6\baselineskip}
\begin{lemma}\label{odd:root}
For every nonempty vertex set $S$ in an odd cycle, there is $s\in S$
such that the two parity classes of the path $C_n-s$ contain
numbers of vertices of $S\setminus\{s\}$ differing by at most one.
\end{lemma}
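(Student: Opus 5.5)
The plan is to turn the statement into a discrete sign-change argument around the cycle. Label the vertices of $C_n$ by $\mathbb Z_n$ in cyclic order, with $n=2r+1$. For a root $s$, the path $C_n-s$ is read as $s+1,s+2,\ldots,s-1$. A vertex at clockwise distance $t\in\{1,\ldots,n-1\}$ from $s$ gets position $t-1$. Define the imbalance
\[
g(s)=\sum_{v\in S\setminus\{s\}}(-1)^{\mathrm{pos}_s(v)} .
\]
Then $|g(s)|$ is exactly the difference between the numbers of vertices of $S\setminus\{s\}$ in the two parity classes. So it suffices to find $s\in S$ with $|g(s)|\le1$. If $|S|=1$, then $g=0$ and we are done, so the real work is for $|S|\ge2$.

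The key step is a recurrence between consecutive roots. Let $S=\{s_1,\ldots,s_k\}$ in cyclic order, and let $d_i$ be the clockwise distance from $s_i$ to $s_{i+1}$ (indices mod $k$). Note that $\sum_i d_i=n$ is odd.

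Compare $g(s_{i+1})$ with $g(s_i)$ term by term:
\begin{itemize}
\item For $v\in S\setminus\{s_i,s_{i+1}\}$, the clockwise distance drops by exactly $d_i$, with no wraparound, since no element of $S$ lies strictly between $s_i$ and $s_{i+1}$. So each such sign is multiplied by $(-1)^{d_i}$.
\item The term for $s_{i+1}$ in $g(s_i)$ has position $d_i-1$ and is removed.
\item The term for $s_i$ in $g(s_{i+1})$ has position $n-d_i-1\equiv d_i\pmod 2$, because $n-1$ is even, and is added.
\end{itemize}
Collecting terms gives $g(s_{i+1})=(-1)^{d_i}g(s_i)+1+(-1)^{d_i}$. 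Thus an odd gap sends $g\mapsto -g$, and an even gap sends $g\mapsto g+2$.

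To conclude, suppose for contradiction that $|g(s_i)|\ge2$ for every $i$; in particular no value is zero. Track the sign of $g$ around the cycle of roots.
\begin{itemize}
\item An odd step flips the sign.
\item An even step from a positive value keeps it positive.
\item An even step from a negative value $g$ gives $g+2$. Since $|g+2|\ge2$ is forced, this means $g\le-4$, so the result stays negative.
\end{itemize}
Hence the sign changes exactly at the odd gaps. Going once around returns to $s_1$, so the number of odd $d_i$ must be even. But $\sum d_i=n$ is odd, so the number of odd gaps is odd, which is a contradiction. Therefore some $s\in S$ has $|g(s)|\le1$.

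The only delicate point is the bookkeeping in the recurrence, namely the parity of the wrapped position of $s_i$. This is exactly where oddness of $n$ enters; the rest is routine.
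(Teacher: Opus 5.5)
Your proof is correct and takes essentially the same route as the paper. Your imbalance $g(s)$ is exactly $-\delta(s)$ in the paper's notation, and your recurrence ($g\mapsto -g$ across an odd gap, $g\mapsto g+2$ across an even gap) is the paper's shifting-root recurrence with that sign flipped. The concluding contradiction is the same: tracking the sign once around the cycle, when an odd number of gaps are odd.
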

\begin{proof}
For $s\in S$, let $\delta(s)$ be the number of other vertices of $S$
at even clockwise distance from $s$, minus the number at odd distance.
If $|S|=1$, then $\delta(s)=0$ and there is nothing to prove.
Otherwise let $t$ be the next vertex of $S$ clockwise from $s$, at
distance $g$. No other vertex of $S$ lies between them, so shifting
the root from $s$ to $t$ gives
\begin{equation}\label{odd:rootrecurrence}
\delta(t)=
\begin{cases}
\delta(s)-2,&g\text{ even},\\
-\delta(s),&g\text{ odd}.
\end{cases}
\end{equation}
To see this, distances to all other vertices decrease by $g$.
The vertex $t$, at distance $g$ from the old root, is replaced in the
count by $s$, at distance $n-g$ from the new root. Since $n$ is odd,
these two distances have opposite parity, giving~\eqref{odd:rootrecurrence}.

Suppose $|\delta(s)|>1$ for every $s\in S$. Across an even gap,
subtracting two cannot change the sign without producing a value
of absolute value at most one. Across an odd gap the sign reverses.
The gaps around the cycle sum to odd $n$, so an odd number of them
are odd. After one complete tour the sign would therefore reverse,
although the root has returned to its starting point. This
contradiction proves the existence of $|\delta(s)|\le1$.
\end{proof}

\Cref{odd:root,odd:binary} settle the case in which one
target color occurs only once.

\begin{lemma}\label{odd:singleton}
\cref{odd:quota} holds if a target multiplicity is $1$.
\end{lemma}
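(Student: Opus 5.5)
Up to relabeling colors, the singleton target color is $A$, so $p_A=1$. From~\eqref{odd:target} we get $p_B+p_C=2r$ with $p_B,p_C\le r$, hence $p_B=p_C=r$. Every proper coloring of profile $(1,r,r)$ is obtained by choosing a root $s$, putting $y_s=A$, and coloring the path $C_n-s$ (which has $2r$ vertices) alternately with $B$ and $C$. Each parity class $E,O$ of this path has exactly $r$ vertices, so both alternating assignments have profile $\mathbf p$. The plan is to choose $s$ by \cref{odd:root} and then choose between the two alternations by \cref{odd:binary}. The quotas are automatically capped correctly: a parity class has only $r$ vertices, so at most $r=p_X$ agreements of color $X\in\{B,C\}$ are ever demanded or possible.

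\emph{Case $m_A\ge1$.} Apply \cref{odd:root} to $S=w^{-1}(A)$ to get a root $s$ with $w_s=A$ such that $S\setminus\{s\}$ meets the two parity classes of $C_n-s$ in numbers differing by at most one. Both classes have size $r$, so the complementary set $w^{-1}(B)\cup w^{-1}(C)$ also meets $E$ and $O$ in numbers differing by at most one. This is exactly the hypothesis of \cref{odd:binary}, applied to the $B/C$-colored vertices partitioned by parity. One of the two alternations therefore retains at least $\lceil m_B/2\rceil\ge t_B$ vertices of color $B$ and $\lceil m_C/2\rceil\ge t_C$ of color $C$. The root itself gives one $A$-agreement, which is at least $t_A$.

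\emph{Case $m_A=0$.} Now $t_A=0$, but the root $s$ must carry a $B$ or $C$ source letter that is lost, and this is the delicate point. Since $m_B+m_C=2r+1$ is odd, exactly one of them is even; say $m_C$.
\begin{itemize}
\item If $m_C=0$, the source is constant $B$. Any root and either alternation retain $r=t_B$ vertices of $B$.
\item If $m_C\ge2$, apply \cref{odd:root} to $S=w^{-1}(C)$. This gives a root $s$ with $w_s=C$ such that the remaining $m_C-1$ vertices of color $C$ are balanced between the parity classes. As in the first case, the $B$-vertices are then balanced too, so the whole set $V\setminus\{s\}$ satisfies the hypothesis of \cref{odd:binary}.
\end{itemize}
In the second subcase, \cref{odd:binary} yields at least $\lceil m_B/2\rceil$ retained $B$-vertices and $\lceil (m_C-1)/2\rceil$ retained $C$-vertices. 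Because $m_C$ is even, $\lceil (m_C-1)/2\rceil=m_C/2=q_C$. So losing the root costs nothing, and~\eqref{odd:agreements} holds.

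The main obstacle I expect is the choice of root when $m_A=0$. Rooting at a vertex of odd-count color would lose one required agreement, so the parity choice of rooting at the even color class is the key point.
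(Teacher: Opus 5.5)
Your proof is correct and follows essentially the paper's route: root at a source vertex of the singleton target color chosen by \cref{odd:root}, then alternate the other two colors on $C_n-s$ via \cref{odd:binary}. When that color is absent from the source, you root instead at a vertex of the even-count color, so no half-count quota is lost. The one inessential difference is your appeal to \cref{odd:root} in the subcase $m_C\ge2$: the whole set $V\setminus\{s\}$ already splits into two parity classes of size $r$, which is all \cref{odd:binary} needs, and the paper simply takes any vertex of the even-count color.
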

The same-profile singleton case has the elementary alternating-path
proof in~\cite[Theorem~7]{raleigh2018}. We include the following argument
because the induction needs independent source and target profiles,
possibly with zero source counts, and a separate agreement quota for
each color.
\begin{proof}
Relabel the colors so that $p_C=1$. The sum and upper bounds
in~\eqref{odd:target} force $p_A=p_B=r$. If the source is
monochromatic, every proper coloring of this target profile has
exactly $p_X=t_X$ agreements in its sole source color $X$, so any
such coloring suffices. Otherwise every source count is at most
$2r$, and hence $q_A,q_B\le r$.

First suppose $m_C\ge1$. By \cref{odd:root}, choose an original
$C$ vertex $s$ such that the two parity classes of $C_n-s$ contain
numbers of other original $C$ vertices differing by at most one.
Keep $s$ colored $C$. Each parity class has $r$ vertices, so after
ignoring the other original $C$ positions, the remaining original
$A,B$ positions in the two classes also have sizes differing by at
most one. \cref{odd:binary}, with the colors renamed, therefore
selects an alternating $A,B$ coloring retaining at least $q_A,q_B$
agreements. Assign its alternating colors also to the ignored
positions. The root is the unique target $C$, so its incident edges
are proper, and its one agreement meets $t_C=1$.

Now suppose $m_C=0$. Of $m_A,m_B$, precisely one is even. Its value
is positive because the source is not monochromatic.
Choose a vertex of the positive even-count
color and assign it the unique target $C$. Decreasing a positive even
count by one leaves its half-ceiling unchanged. On the remaining path,
whose two parity classes have equal size, \cref{odd:binary} therefore
gives an alternating coloring
meeting the original $A,B$ quotas. The $C$ quota is zero.
\end{proof}

\subsection{Interpolation with fixed agreements}

We next describe the profiles of all proper extensions of a prescribed
partial coloring: these are proper colorings that agree with every
prescribed color. The unprescribed positions between consecutive
prescribed vertices form a \emph{gap}, which may be empty.
The fact that there are exactly three colors is
essential: the relevant allocation conditions reduce to one lower and
one upper bound for each coordinate.

\Needspace{6\baselineskip}
\begin{lemma}\label{odd:gap}
Consider a path with $\ell\ge0$ internal vertices and prescribed endpoint
colors $s,t$. If $\ell=0$, assume $s\ne t$. Let $e_X$ be the number of
endpoints colored $X$, counted with multiplicity. An integer vector
$\mathbf x$ is the profile of a proper filling of the internal vertices
if and only if
\begin{equation}\label{odd:gapeq}
\sum_{X\in\mathcal C}x_X=\ell,
\qquad
0\le x_X\le
\left\lfloor\frac{\ell+1-e_X}{2}\right\rfloor
\quad(X\in\mathcal C).
\end{equation}
\end{lemma}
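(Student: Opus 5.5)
Necessity and sufficiency are handled separately; sufficiency is by induction on $\ell$.

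\emph{Necessity.} Let $v_0=s,v_1,\ldots,v_\ell,v_{\ell+1}=t$ be the path, so it has $\ell+2$ vertices and $\ell+1$ edges. Let $y$ be a proper filling and fix a color $X$. The set of vertices colored $X$ is independent. Assign to each such vertex $v_i$ the edge $v_iv_{i+1}$, except that $v_{\ell+1}$, if colored $X$, gets no edge. Each $X$-vertex other than $v_{\ell+1}$ thus occupies a distinct edge, and no two assigned edges share a vertex. Moreover, an internal $X$-vertex $v_i$ also blocks $v_{i-1}v_i$ from being assigned to anything else. A cleaner route is to count the whole path: a path on $\ell+2$ vertices has independence number $\lceil(\ell+2)/2\rceil$, and an endpoint colored $X$ forces further structure. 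Instead, I would treat it directly with three cases.
\begin{itemize}
\item $e_X=0$: the $X$-vertices form an independent set in the internal path on $\ell$ vertices, so $x_X\le\lceil \ell/2\rceil=\lfloor(\ell+1)/2\rfloor$.
\item $e_X=1$: one neighbor of that endpoint is excluded, leaving a path of $\ell-1$ usable vertices, so $x_X\le\lceil(\ell-1)/2\rceil=\lfloor \ell/2\rfloor$.
\item $e_X=2$: both end neighbors are excluded, leaving $\ell-2$ usable vertices (when $\ell\ge2$), so $x_X\le\lceil(\ell-2)/2\rceil=\lfloor(\ell-1)/2\rfloor$.
\end{itemize}
The edge case $\ell=1$ with $e_X=2$ is consistent, since the bound is then $0$. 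The constraint $\sum_X x_X=\ell$ is trivial.

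\emph{Sufficiency.} I would argue by induction on $\ell$. The case $\ell=0$ is immediate because $s\ne t$ is assumed. For $\ell\ge1$, the plan is to choose a color $Z\ne s$ for $v_1$ with $x_Z\ge1$ such that the reduced instance is still feasible. The reduced instance has endpoint colors $Z,t$, length $\ell-1$, and vector $\mathbf x-\mathbf e_Z$. When $\ell-1=0$ it additionally needs $Z\ne t$; this is exactly the $\ell=1$ case, where the bound forces $x_Z=1$ only for a color with $\lfloor(2-e_Z)/2\rfloor\ge1$, i.e.\ $e_Z=0$, so $Z\ne s,t$ automatically.

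The new caps are $u'_Y=\lfloor(\ell-e'_Y)/2\rfloor$, where $e'_Y$ is the endpoint count in the new instance. The step is to verify the new inequality $x_Y-[Y=Z]\le u'_Y$ for every $Y$. There are two kinds of color to check.
\begin{itemize}
\item The old endpoint color $s$ loses one endpoint count, so $e'_s=e_s-1$. Its new cap is $\lfloor(\ell-e_s+1)/2\rfloor$, which equals the old cap. So $s$ is fine.
\item For a color $Y\ne s,Z$, the count $e_Y$ is unchanged and the length drops by one. This cap decreases by one exactly when $\ell+1-e_Y$ is even. That is dangerous only if $x_Y$ equals its old cap $(\ell+1-e_Y)/2$ in that parity; call such $Y$ \emph{tight}.
\end{itemize}
The color $Z$ itself gains an endpoint but loses one from $x$. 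Its cap changes from $\lfloor(\ell+1-e_Z)/2\rfloor$ to $\lfloor(\ell-e_Z-1)/2\rfloor$, a drop of exactly one, so $Z$ is fine.

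The strategy is therefore: if some color $\ne s$ is tight, choose $Z$ to be that color, since tightness gives $x_Z\ge1$ when $\ell\ge1$. Otherwise choose any $Z\ne s$ with $x_Z\ge1$. Such a $Z$ exists because $x_s\le\lfloor \ell/2\rfloor<\ell$ when $\ell\ge1$, using $e_s\ge1$.

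The main obstacle is ruling out \emph{two} tight colors $Y_1,Y_2\ne s$. If both were tight, the sum $x_{Y_1}+x_{Y_2}$ would be large; adding $x_s\ge0$ and comparing with $\sum x=\ell$ should give a contradiction. The computation uses $e_{Y_1}+e_{Y_2}\le 1$, since one endpoint is $s$. Two tight colors would give $x_{Y_1}+x_{Y_2}=(2\ell+2-e_{Y_1}-e_{Y_2})/2\ge \ell+\tfrac12>\ell$, which is impossible. So at most one tight color exists, and the induction closes.
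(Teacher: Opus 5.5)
Your proof is correct, and it takes a different route from the paper in both directions.

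\textbf{Necessity.} The paper counts incidences between $X$-colored vertices and edges. An internal $X$ vertex lies on two edges and an $X$ endpoint on one. No edge of a proper coloring has two $X$ ends, and the path has $\ell+1$ edges. Hence $2x_X+e_X\le\ell+1$ at once, with no case split and no separate treatment of $\ell\le1$. Your abandoned edge-assignment attempt was heading toward exactly this. The three-case independence-number argument you settle on is a valid substitute. You should delete the false starts, and note that for $e_X=1$, $\ell=0$ the bound is trivial because $x_X\le\ell=0$.

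\textbf{Sufficiency.} The paper does not induct. It forms a loopless multigraph on the three colors, with $h_X=\ell+1-2x_X-e_X$ parallel edges joining the two colors other than $X$, so that $X$ has degree $2x_X+e_X$. On three vertices all edges lie in one component, so the filling can be read off an Euler trail from $s$ to $t$, or an Euler circuit when $s=t$. This is the same device as in \cref{odd:feasible}.

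\textbf{Comparison.} Your greedy peeling of $v_1$, which gives $v_1$ the unique tight color whenever one exists, is more elementary and directly algorithmic. Your check that at most one color other than $s$ is tight uses only $e_{Y_1}+e_{Y_2}\le1$. It never uses that the palette has exactly three colors, so your argument proves the analogous statement for any palette. The Euler construction is shorter and produces the whole filling at once. However, both its choice of multiplicities $h_X$ and its connectivity step are specific to three colors.
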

\begin{proof}
Every internal occurrence of $X$ is incident with two edges, and
every endpoint occurrence with one. Properness prevents an edge from
having both ends colored $X$. Since the path has $\ell+1$ edges,
$2x_X+e_X\le\ell+1$, proving the upper bounds.

For sufficiency, form a loopless multigraph on the three colors. Give
the edge joining the other two colors, opposite $X$, multiplicity
\[
h_X=\ell+1-2x_X-e_X.
\]
The assumptions make these multiplicities nonnegative integers.
Their sum is $\ell+1$, and the degree at $X$ is $2x_X+e_X$.
A loopless multigraph on three vertices has at most one component
containing edges. Both prescribed endpoint colors belong to that
component. If $s\ne t$, precisely $s$ and $t$ have odd degree, so
there is an Euler trail from $s$ to $t$; if $s=t$, there is an Euler
circuit starting there. The sequence of colors along this trail gives
a proper path. Its number of internal occurrences of $X$ is
$(\deg(X)-e_X)/2=x_X$, as required.
\end{proof}

We use the following specialization of the integral transportation
criterion, \cref{lem:transport}. Suppose a gap $g$ has $\ell_g$
positions and can receive at most $u_{gX}$ occurrences of color $X$.
Assume all these quantities are nonnegative integers and
$\sum_Xu_{gX}\ge\ell_g$. For nonnegative integer demands $d_X$ of total
$M=\sum_g\ell_g$, a filling of all gaps with those color totals exists
if and only if
\begin{equation}\label{odd:allocation}
\sum_{X\in S}d_X
\le\sum_g\min\left\{\ell_g,\sum_{X\in S}u_{gX}\right\}
\quad(S\subseteq\mathcal C).
\end{equation}
Indeed, apply \cref{lem:transport} with rows indexed by the gaps,
row sums $\ell_g$, column sums $d_X$, and capacities $u_{gX}$.
For a fixed set $S$ of columns, its inequalities for all row sets $I$
are equivalent to
\[
\sum_{X\in S}d_X
\le \sum_{g\notin I}\ell_g+
    \sum_{g\in I}\sum_{X\in S}u_{gX}.
\]
Minimizing the right side independently for each row gives
exactly~\eqref{odd:allocation}.

We now combine the gap capacities with this allocation criterion to
describe every profile compatible with a fixed partial coloring.

\begin{lemma}[Fixed-set interpolation]\label{odd:interpolation}
Let $K\ne\varnothing$ be a set of vertices of $C_n$ with prescribed
colors admitting a proper extension. Suppose profiles $\mathbf x$ and
$\mathbf z$ occur among its proper extensions. Then an integer vector
$\mathbf p$ also occurs as the profile of a proper extension of $K$
whenever
\[
\sum_Xp_X=n,
\qquad
\min\{x_X,z_X\}\le p_X\le\max\{x_X,z_X\}
\quad(X\in\mathcal C).
\]
\end{lemma}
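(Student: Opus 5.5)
The plan is to show that the set of profiles of proper extensions of $K$ is exactly the integer points of a box intersected with the hyperplane $\sum_X p_X=n$. Such a set is closed under coordinatewise interpolation, which gives the claim.

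\emph{Reduction to gaps.} Let $\kappa_X$ be the number of vertices of $K$ prescribed $X$. Removing $K$ splits $C_n$ into gaps $g$, one between each pair of cyclically consecutive prescribed vertices. If $|K|=1$, there is a single gap with $\ell_g=n-1$ and equal endpoints $s=t$. Because some proper extension exists, every empty gap has distinct endpoint colors. Hence \cref{odd:gap} applies to each gap. The internal profile of gap $g$ can be any vector $\mathbf x^{(g)}$ with $\sum_X x^{(g)}_X=\ell_g$ and $0\le x^{(g)}_X\le u_{gX}:=\lfloor(\ell_g+1-e_{gX})/2\rfloor$. The fillings of different gaps are independent, since each gap meets only its own prescribed endpoints. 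So a vector $\mathbf p$ is the profile of a proper extension exactly when the demand vector $\mathbf d=\mathbf p-\boldsymbol\kappa$ can be split across the gaps under these capacities. The existence of one extension gives $\sum_Xu_{gX}\ge\ell_g$ for each $g$. Therefore criterion~\eqref{odd:allocation} applies, provided $\mathbf d\ge0$ and $\sum_X d_X=M=\sum_g\ell_g$; both follow from $\sum_X p_X=n$ once the lower bounds below are in place.

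\emph{Collapsing the cut conditions.} This uses that there are only three colors. For $S=\varnothing$ the inequality in~\eqref{odd:allocation} is trivial. For $S=\mathcal C$ it reads $M\le\sum_g\ell_g$, which holds with equality. For $S=\{X\}$ it is an upper bound $d_X\le U_X:=\sum_g\min\{\ell_g,u_{gX}\}$. For $S=\{Y,Z\}=\mathcal C\setminus\{X\}$, the total is fixed, so the condition is equivalent to a lower bound
\[
d_X\ \ge\ L_X:=M-\sum_g\min\{\ell_g,u_{gY}+u_{gZ}\}.
\]
This lower bound may be negative, so we also keep $d_X\ge0$. Hence the achievable profiles are exactly the integer vectors $\mathbf p$ with $\sum_X p_X=n$ and $\kappa_X+\max\{0,L_X\}\le p_X\le\kappa_X+U_X$ for each $X$.

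\emph{Conclusion.} Both $\mathbf x$ and $\mathbf z$ are achievable, so each satisfies these per-coordinate bounds. The hypothesis places each $p_X$ between $x_X$ and $z_X$, so $p_X$ satisfies the same bounds, and $\sum_X p_X=n$ is assumed. Therefore $\mathbf p$ is achievable.

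The main point needing care is the reduction in the first step. One must check that the independent gap fillings really glue into a proper coloring of the whole cycle, including the degenerate cases $|K|=1$ (one gap with $s=t$) and empty gaps. The rest is the observation that, with three colors, every two-element cut in~\eqref{odd:allocation} is the complement of a single coordinate.
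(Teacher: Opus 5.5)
Your proof is correct and follows the paper's argument essentially step for step: split into gaps, feed the capacities of \cref{odd:gap} into the allocation criterion~\eqref{odd:allocation}, and use that with three colors the cut conditions reduce to per-coordinate bounds on the hyperplane $\sum_Xp_X=n$, a set closed under coordinatewise interpolation. One harmless slip: since $M=\sum_g\ell_g$, your $L_X=\sum_g\bigl(\ell_g-\min\{\ell_g,u_{gY}+u_{gZ}\}\bigr)$ is a sum of nonnegative terms, so it is never negative and the extra $\max\{0,\cdot\}$ is redundant (in the paper's notation this is the remark $L_X\ge k_X$).
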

\begin{proof}
Let $k_X$ be the number of prescribed vertices colored $X$. The
vertices outside $K$ split into gaps between consecutive vertices of
$K$ in cyclic order. Include gaps of length zero. If $|K|=1$, the
same fixed vertex supplies both endpoints of the single gap.
Write $\ell_g$ for the number of internal vertices in gap $g$ and
$e_{gX}$ for the number of its endpoints colored $X$.

\cref{odd:gap} gives the capacities
\[
u_{gX}=\left\lfloor\frac{\ell_g+1-e_{gX}}2\right\rfloor.
\]
The assumed proper extension ensures that these capacities are
nonnegative and that $\sum_Xu_{gX}\ge\ell_g$ for every gap.
Moreover, $u_{gX}\le\ell_g$. Define
\begin{align*}
U_X&=k_X+\sum_gu_{gX},\\
L_X&=k_X+
\sum_g\max\left\{0,\ell_g-\sum_{Y\ne X}u_{gY}\right\}.
\end{align*}
We claim that the extension profiles are precisely
\begin{equation}\label{odd:box}
\left\{\mathbf p\in\mathbb Z^3:
\sum_Xp_X=n,\quad L_X\le p_X\le U_X
\quad(X\in\mathcal C)\right\}.
\end{equation}

An extension with profile $\mathbf p$ requires a total of
$d_X=p_X-k_X$ occurrences of $X$ in the gaps.
For a singleton $S=\{X\}$, condition~\eqref{odd:allocation} is
$p_X\le U_X$. For the two-color set
$S=\mathcal C\setminus\{X\}$, use $\sum_Xd_X=\sum_g\ell_g$ to rewrite
the same condition as
\[
d_X\ge\sum_g
\left(\ell_g-\min\left\{\ell_g,\sum_{Y\ne X}u_{gY}\right\}\right),
\]
which is $p_X\ge L_X$. The empty and full color sets impose no
additional restrictions. Also $L_X\ge k_X$, so these lower bounds
ensure $d_X\ge0$. Thus~\eqref{odd:allocation} supplies an integer
allocation exactly when~\eqref{odd:box} holds, and
\cref{odd:gap} realizes each allocated gap by a proper path.
This proves the claim.

Both $\mathbf x$ and $\mathbf z$ satisfy the coordinate intervals
in~\eqref{odd:box}. Every vector coordinatewise between them satisfies
the same intervals. If its total is $n$, it therefore has the required
proper extension.
\end{proof}

In our applications, $K$ will consist of selected agreements with the
source. Interpolation keeps these particular agreements fixed. It does
not require all agreements of either extension to be retained.

\subsection{Proper source colorings}

Throughout this subsection, $w$ is proper. Hence
$1\le m_X\le r$ and $q_X\ge1$ for every color. All target profiles
satisfy~\eqref{odd:target}. The first two target constructions assume
$\mathbf p\ge\mathbf q$; we then treat a deficient coordinate separately.

Call a target color \emph{saturated} if its multiplicity is $r$,
the largest feasible count. A target with $r$ copies of $A$ needs an independent $A$ class of size
$r$. \Cref{odd:fair,odd:saturated} choose that class with enough source $A$
vertices inside it and enough source $B,C$ vertices outside it, then
complete the target coloring.

\begin{lemma}\label{odd:fair}
There is an independent set $I$ of size $r$ such that
\begin{align*}
|I\cap w^{-1}(A)|&\ge q_A,\\
|I\cap w^{-1}(B)|&\le\left\lfloor\frac{m_B}{2}\right\rfloor,
&
|I\cap w^{-1}(C)|&\le\left\lfloor\frac{m_C}{2}\right\rfloor.
\end{align*}
Moreover, the unique edge of the subgraph induced by $V(C_n)\setminus I$
has original endpoint colors $B$ and $C$.
\end{lemma}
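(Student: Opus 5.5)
The plan is to reduce the choice of $I$ to the choice of a single edge, and then choose that edge by an averaging or sign-change argument around the cycle, in the spirit of \cref{odd:root}.

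\textbf{Independent sets of size $r$.} In $C_{2r+1}$, removing an independent set $I$ of size $r$ leaves $r+1$ vertices in exactly $r$ gaps. Hence exactly one gap is an edge $uv$ and all other gaps are singletons. Conversely, every edge $uv$ determines such an $I$: the odd positions of the path $P=C_n-\{u,v\}$, which has $2r-1$ vertices, read from the neighbour of $v$. So it suffices to choose an edge $uv$ whose source colours are $\{B,C\}$ and which makes the three inequalities hold.

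\textbf{Segment structure.} Call an edge a $BC$-edge if neither endpoint has source colour $A$. Since $w$ is proper, each $A$ vertex covers two edges and no edge is covered twice. So there are exactly $t=n-2m_A\ge1$ such edges, and $t$ is odd. Cutting the cycle at all $BC$-edges gives $t$ segments. Within each segment every edge meets $A$, so a segment reads $N\,A\,N\,A\cdots N$, with non-$A$ letters $N\in\{B,C\}$ at both ends. If segment $i$ contains $a_i$ copies of $A$, it contains $a_i+1$ non-$A$ vertices.

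\textbf{Parity bookkeeping.} Cut at the $BC$-edge between segments $t$ and $1$, with $v$ the first vertex of segment $1$ and $u$ the last vertex of segment $t$. Then segment $1$ loses $v$ and begins at position $1$ with an $A$. Segment $2$ has odd length, so it starts at an even position. Continuing, the parity offset alternates from segment to segment, and because $t$ is odd segment $t$ is again of the first kind. Consequently $I$ consists of the $A$ vertices of the odd-indexed segments $1,3,\dots,t$ together with the non-$A$ vertices of the even-indexed segments. Note that $u$ and $v$ lie outside $I$. The three requirements therefore become one condition: the odd-indexed segments must contain at least $\lceil m_X/2\rceil$ source vertices of each colour $X$. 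For $A$ this is immediate. For $B$ and $C$ it is equivalent to the even-indexed segments containing at most $\lfloor m_X/2\rfloor$ of them.

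\textbf{Choosing the rotation.} Let $S_j=\{j,j+2,\dots,j+t-1\}$ (indices mod $t$) be the favourable set when the cut is before segment $j$. Over the $t$ rotations each segment lies in exactly $(t+1)/2$ of the sets $S_j$. Averaging therefore gives, separately for each colour, some $j$ with at least half of that colour in $S_j$. The main obstacle is to achieve all three colours with one common $j$. Two facts should help. First, $S_{j+1}$ equals the complement of $S_j$ together with segment $j$, so each deficit $\Delta_X(j)=2|S_j\cap w^{-1}(X)|-m_X$ essentially changes sign at every step, corrected by the contribution of segment $j$. Second, the per-segment identity $b_i+c_i=a_i+1$ gives
\[
\Delta_B(j)+\Delta_C(j)=\Delta_A(j)+1,
\]
which ties the three deficits together. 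I would first try to run the tour argument of \cref{odd:root}: assume every rotation fails some colour, follow the signs of the pair $(\Delta_A,\Delta_B)$ once around the $t$ (odd) steps, and derive a sign contradiction. If that does not close, the fallback is a \cref{odd:binary}-type exchange on the non-$A$ letters, using that $B$ and $C$ enter only through the segments.
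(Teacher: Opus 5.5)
Your reduction is correct. Cutting only at the $t=n-2m_A$ edges without an $A$ endpoint makes the ``moreover'' clause automatic. Your bookkeeping also correctly identifies $I$ as the $A$'s of the segments in $S_j$ together with the non-$A$ letters of the other segments. So the lemma becomes: $\Delta_X(j)\ge0$ for all three colours at one common $j$. (One slip: segment $1$ minus $v$ has even length $2a_1$, so segment $2$ starts at an \emph{odd} position, which is why its non-$A$ letters lie in $I$.)

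The gap is that you never prove this common $j$ exists, and that is the entire content of the lemma. Averaging works one colour at a time, so it gives nothing simultaneous. The tour argument of \cref{odd:root} also does not apply as stated. With $x_j$ the number of $X$-vertices in segment $j$, your recurrence reads $\Delta_X(j+1)=2x_j-\Delta_X(j)$. This reverses sign only when $\Delta_X(j)<0$, and you do not say what contradiction the pair $(\Delta_A,\Delta_B)$ should produce. A plan with a fallback is not an argument.

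The missing idea is already in that recurrence. Since $\Delta_X(j)<0$ forces $\Delta_X(j+1)>0$, no colour fails at two consecutive rotations. Use this for $B$ and $C$ only:
\begin{itemize}
\item If every rotation failed $B$ or $C$, labelling each $j$ by a failed colour would properly $2$-colour the odd cycle $\mathbb Z_t$, which is impossible. (For $t=1$ the recurrence gives $\Delta_X=x_1\ge0$ directly.)
\item So some $j$ has $\Delta_B(j),\Delta_C(j)\ge0$. Your identity then gives $\Delta_A(j)=\Delta_B(j)+\Delta_C(j)-1\ge-1$, with $\Delta_A(j)=-1$ only when $\Delta_B(j)=\Delta_C(j)=0$.
\item In that last case, the next rotation has $\Delta_B=2b_j$, $\Delta_C=2c_j$ and $\Delta_A=2a_j+1$, all nonnegative.
\end{itemize}

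This mirrors the paper's proof. The paper $2$-colours the odd cycle of all $n$ sets $I_j$ (consecutive ones are disjoint) to get the $B,C$ bounds. It then uses a neighbouring set to repair the $A$ quota in the parity exception. Finally it must maximize the $A$ count and shift, to force the complementary edge to avoid $A$. Your restriction to $BC$-cuts makes that last step unnecessary.
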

\begin{proof}
Index the vertices modulo $n$, and put
\[
I_j=\{j+2,j+4,\ldots,j+2r\}.
\]
Each $I_j$ is independent, and its complement induces precisely the
edge $\{j,j+1\}$. The sets $I_j,I_{j+1}$ are disjoint and together
contain every vertex except $j+1$.

Call $I_j$ admissible if its numbers of original $B$ and $C$ vertices
satisfy the two asserted upper bounds. An admissible set exists.
Otherwise, label each $I_j$ by a color whose bound it violates,
choosing either one if both are violated. Two consecutive sets cannot
receive the same label: being disjoint, they would then contain at
least $2(\lfloor m_X/2\rfloor+1)>m_X$ vertices of that color $X$.
The labels would give a proper two-coloring of the odd cycle of
indices, a contradiction.

Every admissible set contains at least
\[
r-\left\lfloor\frac{m_B}{2}\right\rfloor
 -\left\lfloor\frac{m_C}{2}\right\rfloor
\]
original $A$ vertices. Checking the parities using
$m_A+m_B+m_C=2r+1$, this number is at least $q_A$ unless
$m_A$ is odd and $m_B,m_C$ are even. In that exceptional case it is
$q_A-1$. If an admissible $I_j$ fails the $A$ quota, its three counts
must therefore be exactly
\[
\left(\frac{m_A-1}{2},\frac{m_B}{2},\frac{m_C}{2}\right).
\]
The adjacent sets $I_{j-1}$ and $I_{j+1}$ are disjoint from $I_j$
and omit, respectively, $j$ and $j+1$ from their union with it.
At least one of those omitted vertices is not originally $A$, because
$w$ is proper on the edge $\{j,j+1\}$. The corresponding adjacent
set contains $(m_A+1)/2=q_A$ original $A$ vertices. Its $B,C$
counts are at most $m_B/2,m_C/2$, respectively, so it is admissible.
Thus an admissible set meeting the $A$ quota exists in every case.

Choose an admissible $I_j$ maximizing its number of original $A$
vertices. If $w_j=A$, replace $I_j$ by $I_{j-2}$; this removes
$j-1$ and inserts $j$. Properness gives $w_{j-1}\ne A$, so the
$A$ count increases and neither of the other counts increases.
This contradicts maximality. Similarly, if $w_{j+1}=A$, replacing
$I_j$ by $I_{j+2}$ removes $j+2$ and inserts $j+1$, with the same
contradiction. Thus neither endpoint of the complementary edge is
originally $A$. Their original colors are distinct, so they are $B,C$.
\end{proof}

\begin{lemma}\label{odd:saturated}
Suppose $\mathbf p\ge\mathbf q$. If a target multiplicity equals $r$,
a proper source has a proper
target satisfying all agreement quotas.
\end{lemma}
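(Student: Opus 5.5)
By symmetry of the color labels, assume $p_A=r$. The plan is to take the independent set $I=I_j$ of size $r$ supplied by \cref{odd:fair}, color all of $I$ with $A$, and then distribute $B$ and $C$ on the complement so that the counts $p_B,p_C$ are met and the $B$ and $C$ quotas are kept. Since $\mathbf p\ge\mathbf q$, every quota $t_X$ equals $q_X$.

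First, the structure of the complement. It consists of $r+1$ vertices, namely $j,j+1,j+3,\ldots,j+2r-1$. The only edge inside it is $\{j,j+1\}$. Every other complement vertex has both of its cycle neighbours in $I$. A coloring $y$ is therefore proper precisely when $y$ is $A$ on $I$, uses only $B,C$ on the complement, and gives $j$ and $j+1$ different colors. Edges between $I$ and its complement are automatically proper, because the complement avoids $A$.

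Second, the agreements available.
\begin{itemize}
\item The $A$ quota holds, since \cref{odd:fair} gives $|I\cap w^{-1}(A)|\ge q_A$.
\item The complement contains at least $m_B-\lfloor m_B/2\rfloor=q_B$ source-$B$ vertices, and similarly at least $q_C$ source-$C$ vertices.
\item By \cref{odd:fair}, the endpoints $j,j+1$ have source colors $B$ and $C$ in some order. Say $w_j=B$ and $w_{j+1}=C$.
\end{itemize}
Because $w$ is proper, $q_B,q_C\ge1$. We can therefore choose a set $S_B$ of $q_B$ source-$B$ complement vertices containing $j$, and a disjoint set $S_C$ of $q_C$ source-$C$ complement vertices containing $j+1$. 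Color $S_B$ with $B$ and $S_C$ with $C$. This makes the edge $\{j,j+1\}$ proper and realizes $q_B$ and $q_C$ agreements.

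Third, the counts. The remaining $r+1-q_B-q_C$ complement vertices are all non-endpoints, so each can take either color freely. This number is nonnegative by~\eqref{odd:pair-quota}. Give $p_B-q_B\ge0$ of them color $B$ and the rest color $C$. The rest number exactly $p_C-q_C\ge0$, because $p_B+p_C=2r+1-p_A=r+1$. The resulting $y$ is proper, has profile $\mathbf p$, and meets all three quotas.

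I expect no serious obstacle here: the real work, namely the existence of a suitable $I$ with the endpoint condition, is already done in \cref{odd:fair}. The only points to check carefully are that $q_B,q_C\ge1$, so that $j$ and $j+1$ can be included among the retained vertices, and that the leftover count is nonnegative.
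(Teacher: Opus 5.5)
Your proposal is correct and is essentially the paper's proof: color the set $I$ from \cref{odd:fair} with $A$, retain $q_B$ source-$B$ and $q_C$ source-$C$ vertices in the complement, including the correspondingly colored endpoints of its unique internal edge, and fill the remaining free vertices to reach $p_B,p_C$. Your explicit listing of the complement and your check via \eqref{odd:pair-quota} that the leftover count is nonnegative spell out what the paper states more briefly.
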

\begin{proof}
Suppose $p_A=r$, and color every vertex of the set $I$ in
\cref{odd:fair} with $A$. This retains at least $q_A$ original
$A$ vertices. The complement of $I$ contains at least
$m_X-\lfloor m_X/2\rfloor=q_X$ original vertices of color $X$,
for each $X\in\{B,C\}$.

In that complement, select $q_B$ original $B$ vertices and $q_C$
original $C$ vertices to retain. Include the corresponding endpoints
of its unique edge; this is possible because $q_B,q_C\ge1$.
There remain $p_B-q_B$ additional $B$ assignments and $p_C-q_C$
additional $C$ assignments. These numbers are nonnegative and their
sum equals the number of unassigned vertices, so fill those vertices
accordingly. The sole edge internal to the complement is proper, and
every edge from the complement to $I$ joins $B$ or $C$ to $A$.
\end{proof}

For a smaller increase in one target color, we use a saturated target
as an endpoint and interpolate back toward the source profile.

\begin{lemma}\label{odd:oneincrease}
Suppose $\mathbf p\ge\mathbf q$ and $p_X>m_X$ for at most one color $X$.
A proper source then
has a proper target satisfying all agreement quotas.
\end{lemma}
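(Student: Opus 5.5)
The plan is to interpolate, using \cref{odd:interpolation}, between the source $w$ itself and a saturated target produced by \cref{odd:saturated}, while keeping fixed a set $K$ of retained agreements.

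If $p_X\le m_X$ for every color, then $\sum_Xp_X=\sum_Xm_X$ forces $\mathbf p=\mathbf m$. In that case $y=w$ is a proper target retaining every vertex, and we are done. Otherwise exactly one color has $p_X>m_X$; relabel so that it is $A$. Thus
\[
p_A>m_A,\qquad q_B\le p_B\le m_B,\qquad q_C\le p_C\le m_C.
\]

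\emph{Step 1: choose an auxiliary saturated profile.} We want $\mathbf p'=(r,b',c')$ with $q_B\le b'\le p_B$, $q_C\le c'\le p_C$ and $b'+c'=r+1$. Such integers exist: by \eqref{odd:pair-quota},
\[
q_B+q_C\le r+1\le p_B+p_C,
\]
so we may start at $(q_B,q_C)$ and raise the coordinates within their intervals until the sum is $r+1$. Then $\mathbf p'$ satisfies \eqref{odd:target}, since $b',c'\ge q_B,q_C\ge1$ and $b',c'\le m_B,m_C\le r$. It also satisfies $\mathbf p'\ge\mathbf q$, because $r\ge q_A$ holds as $m_A\le r$.

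\emph{Step 2: fix the agreements.} \Cref{odd:saturated} gives a proper target $y'$ of profile $\mathbf p'$ with at least $q_X$ agreements of each color $X$. Let $K$ consist of exactly $q_X$ such agreement vertices of each color, prescribed with their common colors. Then $K$ is nonempty, since every $q_X\ge1$. Moreover, both $y'$ and $w$ are proper extensions of $K$: $w$ extends $K$ because the vertices of $K$ are agreements with $w$, and $w$ is proper.

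\emph{Step 3: interpolate.} Apply \cref{odd:interpolation} with $\mathbf x=\mathbf m$ and $\mathbf z=\mathbf p'$. The target $\mathbf p$ has total $n$ and lies coordinatewise between these two profiles:
\[
m_A<p_A\le r,\qquad b'\le p_B\le m_B,\qquad c'\le p_C\le m_C.
\]
So some proper extension $y$ of $K$ has profile $\mathbf p$. It agrees with $w$ on all of $K$, so it retains at least $q_X\ge t_X$ agreements of each color $X$, which gives \eqref{odd:agreements}.

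The only delicate point is Step 1. The choice of $\mathbf p'$ must simultaneously dominate $\mathbf q$, which is needed to invoke \cref{odd:saturated}, and lie on the correct side of $\mathbf p$ in the $B$ and $C$ coordinates, which is needed for interpolation. The pair-quota inequality \eqref{odd:pair-quota} is exactly what makes both possible at once.
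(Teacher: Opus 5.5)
Your proposal is correct and is essentially the paper's proof. You choose the same auxiliary saturated profile $(r,P_B,P_C)$ with $q_B\le P_B\le p_B$, $q_C\le P_C\le p_C$, $P_B+P_C=r+1$ (whose existence follows from \eqref{odd:pair-quota}), take a target from \cref{odd:saturated}, fix exactly $q_X$ agreements of each color as $K$, and interpolate between $\mathbf m$ and that profile via \cref{odd:interpolation}, with the same side checks ($q_A\le m_A\le r$, nonemptiness of $K$, and $\mathbf p$ lying coordinatewise between the two profiles).
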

\begin{proof}
If no coordinate increases, the equality of the totals forces
$\mathbf p=\mathbf m$, and $w$ itself suffices. Otherwise suppose
$A$ is the sole increasing color. We construct a profile
$\mathbf P=(r,P_B,P_C)$ with
\[
q_B\le P_B\le p_B,\qquad
q_C\le P_C\le p_C,\qquad P_B+P_C=r+1.
\]
Choose an integer $P_B$ in the interval
\[
\max\{q_B,r+1-p_C\}
\le P_B\le
\min\{p_B,r+1-q_C\},
\qquad P_C=r+1-P_B.
\]
This interval is nonempty. Indeed, $q_B\le p_B$ and $q_C\le p_C$
hold by assumption, while
\[
p_B+p_C=2r+1-p_A\ge r+1,
\qquad q_B+q_C\le r+1.
\]
The last inequality also follows from~\eqref{odd:pair-quota}.
These four inequalities compare each lower endpoint with
each upper endpoint. The resulting $\mathbf P$ is feasible and
dominates all quotas: its $B,C$ entries lie below the corresponding
entries of $\mathbf p$, and $q_A\le m_A\le r=P_A$.

\cref{odd:saturated} gives a proper coloring $z$ of profile
$\mathbf P$ meeting the quotas. Select exactly $q_X$ agreements
between $w$ and $z$ of each color $X$, and let $K$ be their union,
with the common colors prescribed there. Both $w$ and $z$ are proper
extensions of $K$. Furthermore,
\[
m_A<p_A\le P_A,
\qquad P_B\le p_B\le m_B,
\qquad P_C\le p_C\le m_C.
\]
Thus $\mathbf p$ lies coordinatewise between $\mathbf m$ and
$\mathbf P$. \cref{odd:interpolation} supplies a proper
extension of $K$ with profile $\mathbf p$. Its agreements on $K$
meet every quota.
\end{proof}

We also need a direct construction when the target has too few copies
of one color to meet its full half-count requirement.

\begin{lemma}\label{odd:lower}
If $p_A\le q_A$, a proper source has a proper target satisfying all
agreement quotas.
\end{lemma}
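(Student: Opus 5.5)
We treat the case in which the remaining target coordinates are not deficient, that is, $p_B\ge q_B$ and $p_C\ge q_C$. This holds automatically when $p_A<q_A$, by~\eqref{odd:pair-quota}. If instead $p_A=q_A$ and some other coordinate, say $B$, is deficient, we relabel so that $B$ plays the role of $A$. The quotas are then $t_A=p_A$, $t_B=q_B$ and $t_C=q_C$. So the target's $A$ class must consist entirely of source $A$ vertices. Because $w$ is proper, any $p_A$ source $A$ vertices form an independent set, and we prescribe them all as $A$. Then $C_n$ minus these $p_A\ge1$ vertices splits into $p_A$ paths (gaps). Each gap must be colored alternately by $B$ and $C$, with both end neighbours of an $A$ vertex allowed to be either color. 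An even gap contributes exactly half its length to each color. An odd gap contributes $\lceil\ell_g/2\rceil$ to one color and $\lfloor\ell_g/2\rfloor$ to the other, at our choice of orientation.

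The first step is to choose orientations that meet the $B$ and $C$ quotas, ignoring for now the exact value of $p_B$. In each gap, the two orientations split the gap's positions into parity classes $E,O$ with $\bigl||E|-|O|\bigr|\le1$. We apply \cref{odd:binary} to the source $B$ and $C$ vertices in the gap, treating source $A$ vertices there as ignored positions as in the proof of \cref{odd:singleton}. Summing the per-gap ceilings gives at least $q_B$ and $q_C$ agreements. This yields one proper coloring $z$ with $z$-profile $(p_A,P_B,P_C)$, where $P_B+P_C=2r+1-p_A$, meeting all quotas.

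The second step is to adjust $P_B$ to $p_B$. We fix $K$ to be the $p_A$ prescribed $A$ vertices together with exactly $q_B$ retained $B$ agreements and $q_C$ retained $C$ agreements of $z$. We then use the box description~\eqref{odd:box} from the proof of \cref{odd:interpolation}. Its $A$ coordinate is pinned at $p_A$, since the $A$ capacity of a gap bounded by two $A$ endpoints forces only alternating $B,C$ fillings, and unprescribed positions adjacent to $K$ are otherwise free. So we need $L_B\le p_B\le U_B$. The sufficient route is to exhibit two extensions of $K$ with $B$ counts on either side of $p_B$, and then interpolate.

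The main obstacle is producing these two extensions. The extreme colorings, with all odd gaps oriented towards $B$ or all towards $C$, give $B$ counts $\sum_g\lceil\ell_g/2\rceil$ and $\sum_g\lfloor\ell_g/2\rfloor$, and these bracket $p_B$ because $p_B,p_C\le r$. However, they need not be compatible with $K$. I would first try to choose the $p_A$ prescribed $A$ vertices cleverly, for instance via a root-balancing argument as in \cref{odd:root}, so that each gap has at most one retained color whose parity is forced. Alternatively, I would choose the retained agreements greedily: start from the extremal orientation with $B$ count $\ge p_B$, and flip odd gaps one at a time, each flip changing the $B$ count by one. I would retain agreements via \cref{odd:binary} only in gaps whose flip does not reduce quota counts below the slack $p_B-q_B$ and $p_C-q_C$. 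Verifying that this slack suffices to stop exactly at $p_B$ while keeping both quotas is the step I expect to require the most care.
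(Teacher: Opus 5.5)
There is a genuine gap. Your proposal leaves its decisive step open, and the two claims it does make fail when the $p_A$ prescribed source $A$ vertices are chosen arbitrarily.

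\emph{The quota step.} \cref{odd:binary} requires the source $B$ and $C$ vertices of a gap to split between its two parity classes into sets whose sizes differ by at most one. Balance of the gap positions does not give this once the unprescribed source $A$ vertices are ignored, because they may all lie in one class. In \cref{odd:singleton} that balance is exactly what \cref{odd:root} provides. For instance, on $C_{11}$ let the source have $A$ at $0,2,4,6,8$, $B$ at $1,3,9$ and $C$ at $5,7,10$, with target $(1,5,5)$. If you prescribe $0$ as the $A$ vertex, the two alternating colorings of the path $1,\dots,10$ retain $(3,1)$ and $(0,2)$ vertices of colors $(B,C)$, while $q_B=q_C=2$.

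\emph{The bracketing step.} Reorienting odd gaps moves the $B$ count only through an interval of length $o$, the number of odd gaps. Moreover $\sum_g\lceil\ell_g/2\rceil\ge p_B$ with $p_B=r$ forces $o=p_A-1$. This is a property of the chosen set, not a consequence of $p_B,p_C\le r$. On $C_{15}$, put source $A$ at $0,3,6,9,12$ and color each remaining pair $\{3i+1,3i+2\}$ with $B,C$ in some order. Prescribing $\{0,3,6\}$ leaves gaps of lengths $2,2,8$. Every alternating filling then has exactly six $B$'s, so the feasible target $(3,7,5)$, which has $p_A=3=q_A$, is unreachable. The greedy flipping you sketch inherits both problems, and you never say how the prescribed set should be chosen.

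The missing idea is to fix only one $A$ vertex in advance and choose the others afterwards. Take the balanced root $s$ of \cref{odd:root} for the source $A$ class. Both parity classes of $C_n-s$ have $r$ vertices, and their source $A$ counts differ by at most one. Hence \cref{odd:binary} applies and yields a proper coloring of profile $(1,r,r)$ meeting $q_B$ and $q_C$. Each class also contains at least $\lfloor(m_A-1)/2\rfloor=q_A-1\ge p_A-1$ source $A$ vertices.

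Now recolor to $A$ exactly $r-p_B$ source $A$ vertices in the $B$-colored class and $r-p_C$ in the $C$-colored class. These numbers are nonnegative and sum to $p_A-1$. The result is proper because the source $A$ class is independent. It keeps every $B$ and $C$ agreement because only source $A$ vertices changed. Its profile is exactly $\mathbf p$, with $p_A$ agreements in color $A$, so no interpolation is needed.

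Incidentally, your opening case split is vacuous: \eqref{odd:pair-quota} gives $p_B\ge q_B+(q_A-p_A)\ge q_B$, and likewise for $C$, whenever $p_A\le q_A$.
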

\begin{proof}
By~\eqref{odd:pair-quota}, $p_B\ge q_B$ and $p_C\ge q_C$.
The required quotas are therefore $t_A=p_A$, $t_B=q_B$, and $t_C=q_C$.
Set $a=m_A$ and $h=p_A-1$.
Apply \cref{odd:root} to the original $A$ class. Keep the chosen
root $s$ colored $A$. The two parity classes of $C_n-s$ each have
$r$ vertices, and their original $A$ counts differ by at most one.
Each therefore contains at least
$\lfloor(a-1)/2\rfloor=q_A-1\ge h$ original $A$ positions.

Temporarily color the path alternately $B,C$. To choose between the
two alternating colorings, ignore the original $A$ positions when counting
agreements. The two sets of remaining original $B,C$ vertices have
sizes differing by at most one, because the full parity classes have
equal size and their original $A$ counts differ by at most one.
\cref{odd:binary} therefore gives a choice retaining at
least $q_B$ original $B$ vertices and $q_C$ original $C$ vertices.
Together with the root, this is a proper coloring of profile $(1,r,r)$.

Now recolor to $A$ precisely $r-p_B$ original $A$ positions currently
colored $B$, and $r-p_C$ original $A$ positions currently colored $C$.
The required numbers are nonnegative, and
\[
(r-p_B)+(r-p_C)
=2r-(2r+1-p_A)=p_A-1=h.
\]
Each number is therefore at most $h$, and each parity class contains
enough original $A$ positions. Including the root, all newly
$A$-colored vertices lie in the original $A$ class, which is
independent. Thus the recoloring remains proper. Its profile is
$\mathbf p$, and it retains exactly $p_A=t_A$ of the original $A$ vertices.
None of the selected $B,C$ agreements has been changed.
\end{proof}

The preceding lemmas leave only targets that increase two color counts
from a proper source. The boundary case $P_A=q_A$ of \cref{odd:lower}
provides the second endpoint for interpolation.

\begin{proposition}\label{odd:proper}
\cref{odd:quota} holds whenever the source is proper.
\end{proposition}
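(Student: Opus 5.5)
We split into cases according to how the target profile $\mathbf p$ compares with $\mathbf q$ and with $\mathbf m$. Throughout, the source $w$ is proper, so $1\le m_X\le r$ and $q_X\ge1$ for every color.

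\emph{Case 1: a deficient coordinate.} Suppose some coordinate is deficient, say $p_A<q_A$. Then \cref{odd:lower} applies directly, since its hypothesis is $p_A\le q_A$. By~\eqref{odd:pair-quota}, at most one coordinate can be deficient. So from now on we may assume $\mathbf p\ge\mathbf q$.

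\emph{Case 2: at most one increase.} If $p_X>m_X$ for at most one color, \cref{odd:oneincrease} finishes. Since the totals of $\mathbf p$ and $\mathbf m$ agree, not all three coordinates can increase. The only remaining case is therefore two increasing colors. After relabeling,
\[
p_B>m_B,\qquad p_C>m_C,\qquad q_A\le p_A<m_A.
\]

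\emph{Case 3: two increases.} We interpolate, as in \cref{odd:oneincrease}, but now with an auxiliary endpoint on the deficient boundary. We look for a profile $\mathbf P=(q_A,P_B,P_C)$ with
\[
P_B+P_C=2r+1-q_A,\qquad p_B\le P_B\le r,\qquad p_C\le P_C\le r.
\]
We choose an integer $P_B$ in the interval
\[
\max\{p_B,\,r+1-q_A\}\le P_B\le\min\{r,\,2r+1-q_A-p_C\},
\]
and set $P_C=2r+1-q_A-P_B$. The four endpoint comparisons all hold:
\begin{itemize}
\item $p_B\le r$ holds because $\mathbf p$ is feasible;
\item $p_B\le 2r+1-q_A-p_C$ is equivalent to $q_A\le p_A$;
\item $r+1-q_A\le r$ holds because $q_A\ge1$;
\item $r+1-q_A\le 2r+1-q_A-p_C$ is equivalent to $p_C\le r$.
\end{itemize}
Hence the interval is nonempty. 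Moreover $1\le q_A\le r$, so $\mathbf P$ satisfies~\eqref{odd:target}.

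\emph{Building the auxiliary target.} Apply \cref{odd:lower} to the target $\mathbf P$; this is allowed because $P_A=q_A\le q_A$. Its quotas are $\min\{P_A,q_A\}=q_A$, $\min\{P_B,q_B\}=q_B$ and $\min\{P_C,q_C\}=q_C$, since $P_B\ge p_B>m_B\ge q_B$ and similarly for $C$. We obtain a proper coloring $z$ of profile $\mathbf P$ with at least $q_X$ agreements with $w$ in each color $X$.

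\emph{Interpolation.} Select exactly $q_X$ such agreements of each color, and let $K$ be their union, with the common colors prescribed. Then $K\ne\varnothing$, and both $w$ and $z$ are proper extensions of $K$. We check that $\mathbf p$ lies coordinatewise between $\mathbf m$ and $\mathbf P$:
\[
P_A=q_A\le p_A<m_A,\qquad m_B<p_B\le P_B,\qquad m_C<p_C\le P_C.
\]
Since $\sum_Xp_X=n$, \cref{odd:interpolation} yields a proper extension of $K$ with profile $\mathbf p$. It keeps the $q_X=t_X$ selected agreements of each color, which gives~\eqref{odd:agreements}.

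The main obstacle is the choice of the auxiliary endpoint $\mathbf P$ in Case 3. It must simultaneously
\begin{itemize}
\item be feasible,
\item lie beyond $\mathbf p$ in both increasing coordinates and below it in the decreasing one, and
\item be reachable with full quotas by an earlier lemma.
\end{itemize}
Taking $P_A=q_A$ is exactly what makes \cref{odd:lower} applicable while still keeping $t_A=q_A$. The inequality $q_A\le p_A$, which holds because no coordinate is deficient, is precisely what makes the interval for $P_B$ nonempty.
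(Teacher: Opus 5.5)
Your proposal is correct and follows essentially the same route as the paper: the deficient case goes to \cref{odd:lower}, the case of at most one increase goes to \cref{odd:oneincrease}, and for two increases you take an auxiliary endpoint $\mathbf P$ on the boundary $P_A=q_A$ from \cref{odd:lower} and then apply \cref{odd:interpolation}. Your interval for $P_B$ is only a reparametrization of the paper's choice of $u$, which splits $t=p_A-q_A$ between the two increasing colors; the four endpoint comparisons you check are exactly the paper's constraints $0\le u\le r-p_B$ and $0\le t-u\le r-p_C$.
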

\begin{proof}
If some coordinate is deficient, relabel it $A$ and apply
\cref{odd:lower}. We may therefore assume $\mathbf p\ge\mathbf q$,
so the agreement quotas are the half-count requirements $q_X$.
\cref{odd:oneincrease} treats every target with at most one
increasing coordinate. In the remaining case, precisely two
coordinates increase and the third decreases. Relabel so that
\[
p_B>m_B,\qquad p_C>m_C,\qquad p_A<m_A.
\]
We first construct a target on the boundary $P_A=q_A$, further in
the same coordinate directions than $\mathbf p$.
Set $t=p_A-q_A\ge0$. Distribute these $t$ units between the two
increasing colors by choosing an integer $u$ satisfying
\[
0\le u\le r-p_B,\qquad 0\le t-u\le r-p_C.
\]
Such a choice exists because the total available capacity is
\[
(r-p_B)+(r-p_C)=p_A-1\ge p_A-q_A=t.
\]
For example, one may take any integer in
$[\max\{0,t-(r-p_C)\},\min\{t,r-p_B\}]$.

The vector
\[
\mathbf P=(q_A,p_B+u,p_C+t-u)
\]
has total $2r+1$, every coordinate lies between $1$ and $r$, and
every coordinate dominates the corresponding quota.
\cref{odd:lower} gives a proper coloring $z$ of this profile
meeting all quotas. As before, choose exactly $q_X$ agreements of
each color between $w$ and $z$ and prescribe their common colors
on the resulting nonempty set $K$.

Both $w$ and $z$ are proper extensions of $K$. The inequalities
\[
P_A=q_A\le p_A<m_A,
\qquad m_B<p_B\le P_B,
\qquad m_C<p_C\le P_C
\]
place $\mathbf p$ coordinatewise between their profiles.
\cref{odd:interpolation} gives a proper extension with profile
$\mathbf p$, and the fixed agreements on $K$ supply the quotas.
\end{proof}

\subsection{Induction for arbitrary sources}

For an improper source, an equal adjacent pair supplies the two
vertices deleted in the induction. The target counts are reduced in
two colors so that reinserting the pair restores both its profile and
its agreement quotas.

\begin{proof}[Proof of \cref{odd:quota}]
We induct on the odd cycle length $n=2r+1$.
For $n=3$, the target profile must be $(1,1,1)$ and every source
color that occurs has quota one. Retain one vertex of each such
color, then assign the missing colors to the remaining vertices.
The resulting coloring uses all three colors once, hence is proper
and meets every quota, including for a monochromatic source.

Suppose $n\ge5$. \cref{odd:singleton} handles targets having
a coordinate equal to one, and \cref{odd:proper} handles
proper sources. We may therefore assume that the source contains an
equal adjacent pair and that
\[
2\le p_X\le r\quad(X\in\mathcal C).
\]
At most one target count equals $r$, since two such counts would
force the third to be one. In particular, the remaining case has
$r\ge3$.

We first show that, for every source color $A$ occurring on an equal
adjacent pair, some different color $B$ has
\begin{equation}\label{odd:slack}
p_B>q_B.
\end{equation}
Indeed, the repeated pair gives $m_A\ge2$, so the other two
source counts have total at most $2r-1$. Consequently
\[
q_B+q_C
\le\left\lceil\frac{m_B+m_C+1}{2}\right\rceil\le r,
\qquad
p_B+p_C=2r+1-p_A\ge r+1.
\]
At least one of those two target counts strictly exceeds its
half-count requirement; label that color $B$.
This proves~\eqref{odd:slack}.

Choose an equal adjacent pair $AA$ and a different slack color $B$
so that any target color of multiplicity $r$ is one of $A,B$.
If no target multiplicity is $r$, there is nothing further to arrange.
If the unique such color is $X$ and the source contains an adjacent
$XX$, take $A=X$ and use~\eqref{odd:slack} to choose $B$.
If the source contains no $XX$, its $X$ vertices form an independent
set, so
\[
m_X\le r,
\qquad q_X\le\left\lceil\frac r2\right\rceil<r=p_X.
\]
Choose any equal adjacent pair $AA$ and take $B=X$. This also
has the required properties.

Delete the two vertices of the selected pair, and join their outside
neighbors to obtain $C_{n-2}$. Its source profile is
\[
\mathbf m'=\mathbf m-2\mathbf e_A.
\]
Prescribe the smaller target profile
\[
\mathbf p'=\mathbf p-\mathbf e_A-\mathbf e_B,
\]
where $\mathbf e_X$ is the coordinate unit vector for $X$.
The new half-count requirements satisfy
\[
\mathbf q'=\mathbf q-\mathbf e_A,
\]
because $\lceil(m_A-2)/2\rceil=\lceil m_A/2\rceil-1$.
For coordinate $A$, the target count and half-count requirement
both decrease by one. Thus its capped quota also decreases by one:
\[
t'_A=\min\{p_A-1,q_A-1\}=t_A-1.
\]
For coordinate $B$, the strict integer inequality~\eqref{odd:slack}
gives $p_B-1\ge q_B$, so $t'_B=q_B=t_B$. The third capped quota
is unchanged. Hence $\mathbf t'=\mathbf t-\mathbf e_A$.
Every target count remains positive. Every coordinate originally
equal to $r$ has been reduced, and all other coordinates were already
at most $r-1$. Thus
\[
\sum_Xp'_X=2(r-1)+1,
\qquad 1\le p'_X\le r-1.
\]
All induction hypotheses hold. Zero coordinates of the reduced source
cause no problem because the theorem allows arbitrary nonnegative
source counts.

Apply induction to obtain a proper target coloring $y'$ on the
smaller cycle meeting the reduced quotas. Let $L,R$ be its colors at
the endpoints of the new edge. Since $y'$ is proper, $L\ne R$.
Replace that edge by the original two vertices, assigning them
either $AB$ or $BA$. The order $AB$ fails only if $L=A$ or $R=B$;
the order $BA$ fails only if $L=B$ or $R=A$.
If both orders failed, their two conditions, together with $A\ne B$,
would force either $L=R=A$ or $L=R=B$. This is impossible, so at
least one order produces a proper coloring.

The insertion restores the profile $\mathbf p$. Both restored source
vertices were originally $A$, and exactly one has now been colored
$A$. We have therefore added one $A$ agreement, restoring its quota
from $t_A-1$ to $t_A$. All previously retained vertices keep their
colors, so no other quota is lost. This completes the induction.
\end{proof}

\subsection{Directed profile radii}

\begin{definition}[Directed profile radius]\label{odd:directed-definition}
For any nonnegative integer source profile $\mathbf m$ of total $n$
and any feasible target profile $\mathbf p$, put
\[
 R_{C_n}(\mathbf m\to\mathbf p)
 =\max_{\substack{w:V(C_n)\to\mathcal C\\\mathbf m(w)=\mathbf m}}
   \min_{y\in\mathcal P_{\mathbf p}(C_n)}d_{\mathrm H}(w,y).
\]
The source profile need not be feasible.
For a nonempty family $\mathcal Q$ of feasible target profiles, define
$R_{C_n}(\mathbf m\to\mathcal Q)$ by allowing any proper target whose
profile belongs to $\mathcal Q$ in the same maximum--minimum.
\end{definition}

Write $(x)_+=\max\{x,0\}$.
\begin{theorem}[Directed and restricted-target radii]\label{odd:directed}
For every pair in \cref{odd:directed-definition}, with $n=2r+1$,
\begin{align*}
 R_{C_{2r+1}}(\mathbf m\to\mathbf p)
 &=2r+1-\sum_{X\in\mathcal C}
       \min\left\{p_X,\left\lceil\frac{m_X}{2}\right\rceil\right\}\\
 &=\sum_{X\in\mathcal C}\left\lfloor\frac{m_X}{2}\right\rfloor
   +\sum_{X\in\mathcal C}
       \left(\left\lceil\frac{m_X}{2}\right\rceil-p_X\right)_+.
\end{align*}
At most one term in the positive-part sum is nonzero.
More generally, for every nonempty family $\mathcal Q$ of feasible
target profiles,
\begin{equation}\label{odd:target-family}
R_{C_{2r+1}}(\mathbf m\to\mathcal Q)
=2r+1-\max_{\mathbf p\in\mathcal Q}
 \sum_{X\in\mathcal C}\min\{p_X,\lceil m_X/2\rceil\}.
\end{equation}
The block word $A^{m_A}B^{m_B}C^{m_C}$ attains the radius for every
target profile simultaneously.
In particular, every feasible source profile satisfies
\[
 R^{\mathrm w}_{C_{2r+1}}(\mathbf m)
 =R_{C_{2r+1}}(\mathbf m)
 =\sum_{X\in\mathcal C}\left\lfloor\frac{m_X}{2}\right\rfloor.
\]
\end{theorem}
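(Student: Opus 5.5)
The plan is to prove the family formula~\eqref{odd:target-family} directly and read everything else off from it. There are two inequalities. The upper bound uses \cref{odd:capped} for an arbitrary source word. The lower bound uses the block word $A^{m_A}B^{m_B}C^{m_C}$ against every proper target at once. The single-profile formula is the case $\mathcal Q=\{\mathbf p\}$. The weak identity is the case where $\mathcal Q$ is the set of all feasible profiles.

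\emph{Upper bound.} Fix any word $w$ of profile $\mathbf m$ and choose $\mathbf p\in\mathcal Q$ maximizing $\sum_X\min\{p_X,q_X\}$. Every feasible $\mathbf p$ satisfies~\eqref{odd:target} by \cref{odd:feasible}. Hence \cref{odd:capped} gives a proper $y$ of profile $\mathbf p$ with at least $\sum_X t_X$ agreements. Its distance from $w$ is therefore at most $2r+1-\max_{\mathbf p\in\mathcal Q}\sum_X\min\{p_X,q_X\}$.

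\emph{Lower bound.} Let $w$ be the block word and let $y$ be any proper coloring, with profile $\mathbf p$, say. The agreements of color $X$ lie in the $X$-block and form an independent set of $C_{2r+1}$ inside it. If $m_X\le 2r$, the block induces a path on $m_X$ vertices, so it has at most $\lceil m_X/2\rceil$ such agreements. If $m_X=2r+1$, the block is the whole cycle, and an independent set has at most $r<\lceil m_X/2\rceil$ vertices. In both cases there are also at most $p_X$ agreements of color $X$, since $y$ uses $X$ only $p_X$ times. Summing over $X$ gives $d_{\mathrm H}(w,y)\ge 2r+1-\sum_X\min\{p_X,q_X\}$. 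This holds for every proper $y$, so the block word attains the radius for every target profile and every family simultaneously. This proves~\eqref{odd:target-family}, and also the first displayed formula when $\mathcal Q=\{\mathbf p\}$.

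\emph{Remaining identities.} For the second expression, write $\min\{p_X,q_X\}=q_X-(q_X-p_X)_+$ and use $\sum_X(\lfloor m_X/2\rfloor+\lceil m_X/2\rceil)=2r+1$. At most one positive-part term is nonzero because at most one coordinate can be deficient, by~\eqref{odd:pair-quota}.

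For the weak statement at a feasible $\mathbf m$, inclusion of the target sets gives $R^{\mathrm w}\le R$. Taking $\mathbf p=\mathbf m$ gives $R=\sum_X\lfloor m_X/2\rfloor$. Now take $\mathcal Q$ to be all feasible profiles. Since $\sum_X\min\{p_X,q_X\}\le\sum_X q_X$ for every $\mathbf p$, formula~\eqref{odd:target-family} yields $R^{\mathrm w}\ge\sum_X\lfloor m_X/2\rfloor$.

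No step here is a real obstacle, since the hard work was done in \cref{odd:capped}. The only point needing care is the lower-bound count when a block is the entire cycle or is empty, and both cases are handled above.
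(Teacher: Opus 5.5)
Your proof is correct and follows the paper's argument: the upper bound comes from \cref{odd:capped} applied with a profile in $\mathcal Q$ that maximizes the capped agreement sum, and the lower bound comes from the three-block word, with the weak case read off by comparing target families. The only cosmetic difference is the monochromatic source. You handle it through the independence bound $r<\lceil(2r+1)/2\rceil$ together with the cap $p_X$, whereas the paper simply notes that such a source has exactly $p_X$ agreements with every target.
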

\begin{proof}
The upper bound is \cref{odd:capped}. For the reverse inequality,
use the three-block obstruction of
Clark et al.~\cite[Proposition~3.5]{clark2006}, allowing empty blocks.
If the source is not monochromatic, each nonempty block induces a
path, so a proper target retains at most $\lceil m_X/2\rceil$
vertices of color $X$. Its target profile also limits that number
to $p_X$. Summing gives the lower bound. For a monochromatic
source, every target has exactly $p_X$ agreements in its sole source
color, and the same formula follows directly.
The same block word gives these lower bounds simultaneously for all
$\mathbf p\in\mathcal Q$. For the reverse bound, apply
\cref{odd:capped} with a profile in $\mathcal Q$ maximizing the
displayed agreement sum. This proves \eqref{odd:target-family}.
The second expression is an algebraic rearrangement, and
\eqref{odd:pair-quota} proves the assertion about the positive-part
sum. For a feasible source profile, take $\mathbf p=\mathbf m$
to obtain the ordinary value. The same block obstruction bounds
the weak radius from below, while the ordinary upper bound applies
to the larger family of weak targets. Hence the two profile radii agree.
\end{proof}

\section{Complete multipartite graphs}\label{sec:mp}

On an odd cycle, \cref{odd:directed} determines the repair cost for
each pair of source and target profiles. For a complete multipartite
graph, every optimal proper coloring is constant on each part, so
repair instead amounts to assigning colors to parts. Prescribed
multiplicities become row and column sums of a disturbance matrix.
The transportation criterion in \cref{lem:transport}, which governed
the gap allocations in \cref{odd:interpolation}, will again determine
which integer counts can be realized. The resulting matrix model also
provides the starting point for the extrema in \cref{sec:extremal}.

\subsection{Matrix representation and assignment duality}

For a nonnegative square matrix $M=(m_{ij})$ of order $h$, define its
\emph{assignment score}, or max-plus tropical determinant, by
\[
 \tau(M)=\max_{\pi\in S_h}\sum_{i=1}^h m_{i,\pi(i)}.
\]

Let $G=K_{p_1,\ldots,p_k}=K_{\mathbf p}$, where every $p_i$ is positive, and put
$\mathbf p=(p_1,\ldots,p_k)^{\mathsf T}$ and $n=\sum_i p_i$.
Every proper $k$-coloring of $G$ assigns a distinct color to each part.
We may therefore label the colors so that color $j$ has multiplicity
$p_j$. For a rearrangement $w$, let $m_{ij}$ count the vertices of color
$j$ in part $i$. The possible matrices are precisely
\[
 \mathcal T(\mathbf p)=\{M\in\mathbb Z_{\ge0}^{k\times k}:
 M\mathbf1=\mathbf p,\ M^{\mathsf T}\mathbf1=\mathbf p\}.
\]
We call $M$ the \emph{disturbance matrix} of $w$; its row and column
sum vectors are its \emph{margins}.
Recall that $\tau(M)$ is the maximum assignment score. Assigning color
$\pi(i)$ to part $i$ retains exactly
$\sum_i m_{i,\pi(i)}$ vertices. Consequently,
\begin{equation}\label{eq:mp:matrix-weak}
 B_{\mathrm w}(G)=n-\mu(\mathbf p),\qquad
 \mu(\mathbf p)=\min_{M\in\mathcal T(\mathbf p)}\tau(M).
\end{equation}

For a repair preserving color multiplicities, the assignment is allowed
only when $p_i=p_{\pi(i)}$ for every $i$. Partition $[k]$ into
\emph{size classes}, grouping indices with equal part sizes. If
$M[\Gamma,\Gamma]$ is the principal submatrix belonging to a size class
$\Gamma$, the allowed assignments act independently on these classes.
Thus
\begin{equation}\label{eq:mp:matrix-ordinary}
 B(G)=n-\min_{M\in\mathcal T(\mathbf p)}
       \sum_{\Gamma}\tau(M[\Gamma,\Gamma]).
\end{equation}
The distinction between the two parameters is therefore a distinction
between unrestricted assignments and assignments constrained by the
size classes.

Row and column \emph{potentials} are vectors $\mathbf u,\mathbf v$.
They \emph{cover} $M$ if $u_i+v_j\ge m_{ij}$ for every $i,j$, and their
\emph{cost} is $\sum_i u_i+\sum_jv_j$.
We shall use the following integral form of assignment duality
\cite{kuhn1955}. We include a short proof to fix the normalization
of its nonnegative potentials.

\Needspace{6\baselineskip}
\begin{lemma}[Integral assignment duality]\label{lem:assignment-duality}
For a nonnegative integer matrix $M$ of order $h$,
\[
 \tau(M)=\min\left\{
 \sum_i u_i+\sum_j v_j:
 \mathbf u,\mathbf v\in\mathbb Z_{\ge0}^h,
 \quad u_i+v_j\ge m_{ij}\ \text{for every }i,j
 \right\}.
\]
\end{lemma}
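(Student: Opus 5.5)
The plan is to prove the two inequalities separately. The easy direction is weak duality. Let $\mathbf u,\mathbf v\ge0$ be integral potentials covering $M$ and let $\pi\in S_h$. Then
\[
\sum_i m_{i,\pi(i)}\le\sum_i\bigl(u_i+v_{\pi(i)}\bigr)=\sum_iu_i+\sum_jv_j,
\]
because $\pi$ is a bijection. Hence $\tau(M)$ is at most the minimum cost. Nonnegativity and integrality play no role here.

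For the reverse inequality we must construct nonnegative integer potentials of cost exactly $\tau(M)$. The tool will be the König--Egerváry theorem, which we deduce from \cref{lem:transport} (or directly from the augmenting-path remark after it). The construction inducts on $\tau(M)$. If $\tau(M)=0$, then every entry vanishes, since a positive entry lies on some permutation; zero potentials then suffice.

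For the induction step, let $Z$ be the bipartite graph on rows and columns whose edges are the positive entries of $M$. A unit-capacity version of the max-flow/min-cut argument gives König's theorem for $Z$: the maximum matching size equals the minimum vertex cover size.
\begin{itemize}
\item Choose a minimum vertex cover of $Z$, consisting of row set $I$ and column set $J$, and let $M'$ be $M$ with $1$ subtracted from every entry in rows $I$ and in columns $J$. Entries with both indices covered lose $2$, so first replace such an entry $x$ by $\max\{x-2,0\}$, and treat the negative overshoot as an exception to be handled.
\item The cleaner route is to avoid this overshoot altogether. Run the Hungarian method in integers: start with $u_i=\max_j m_{ij}$ and $v_j=0$. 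Repeatedly take the equality graph of the current potentials, i.e.\ the pairs with $u_i+v_j=m_{ij}$.
\item If the equality graph has no perfect matching, König (via \cref{lem:transport}) gives a cover $I\cup J$ with $|I|+|J|<h$. Lower $u_i$ by $1$ for rows $i\notin I$ and raise $v_j$ by $1$ for columns $j\in J$. The cost drops by $h-|I|-|J|\ge1$, and covering is preserved: uncovered pairs had slack at least $1$, and pairs with row in the complement of $I$ and column in $J$ keep their sum.
\item Since the cost is a nonnegative integer bounded below by $\tau(M)$, the process terminates. It can only terminate with a perfect matching in the equality graph, whose permutation $\pi$ has $\sum_i m_{i,\pi(i)}$ equal to the cost. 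So the cost equals $\tau(M)$.
\end{itemize}

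The main obstacle is keeping the potentials nonnegative, since lowering $u_i$ could make it negative. The fix comes at the end: if some $u_i<0$, then $v_j\ge m_{ij}-u_i\ge -u_i$ for every $j$. Let $\delta=-\min_i u_i>0$, add $\delta$ to every $u_i$ and subtract $\delta$ from every $v_j$. This preserves all sums $u_i+v_j$ and hence the cost. It makes every $u_i\ge0$, and $v_j\ge -\min_iu_i-\delta=0$. Symmetrically one can normalize $\min_iu_i=0$. All steps are integral, which gives the stated integral duality.
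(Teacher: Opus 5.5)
Your proof is correct, but it takes a different route from the paper.

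The paper fixes an optimal permutation and moves it to the diagonal. It then observes that optimality forbids positive cycles for the edge weights $h_{ij}-h_{ii}$. The longest-walk values $a_j$ from an auxiliary source are therefore finite integers, and $u_i=h_{ii}-a_i$, $v_j=a_j$ cover the matrix at cost $\sum_i h_{ii}=\tau(M)$ in a single step, with no matching theory.

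You instead run Kuhn's primal--dual descent. Whenever the equality graph has no perfect matching, a cover with $|I|+|J|<h$ lowers the integer cost while preserving the covering. Weak duality, which as you note holds for signed potentials, then forces termination at a perfect matching of tight pairs. The cover you need does come straight from \cref{lem:transport}. Take $0/1$ capacities on tight pairs; a violated inequality for sets $I',J'$ gives the cover formed by the rows outside $I'$, the columns outside $J'$, and one endpoint of each tight pair in $I'\times J'$. This cover has fewer than $h$ vertices.

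Your route reuses the paper's transportation lemma and finds an optimal assignment rather than presupposing one. The paper's route is shorter and self-contained once an optimum is fixed. The final shift to nonnegative potentials is the same in both.

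When you write it up, drop the first bullet, whose ``overshoot'' is never resolved. Also drop the announced induction on $\tau(M)$, since the argument you actually complete uses neither. Finally, state explicitly that each $v_j$ only increases from $0$; this covers the case where no $u_i$ becomes negative.
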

\begin{proof}
Every feasible pair of potentials bounds every assignment score by its
total sum. To obtain equality, choose an optimal permutation and permute
the columns to make its entries diagonal; call the resulting matrix $H$.
Give each directed edge $i\to j$ weight $h_{ij}-h_{ii}$. A directed cycle
of positive weight would replace part of the diagonal assignment by a
larger assignment. Thus every directed cycle has nonpositive weight.

Adjoin a source with a zero-weight edge to every index and let $a_j$ be
the maximum weight of a walk from this source to $j$. Removing a closed
subwalk cannot decrease its weight, so the maximum is attained on a
simple path. In particular, every $a_j$ is a finite integer, and
\[
 a_j\ge a_i+h_{ij}-h_{ii}.
\]
The integer potentials $u_i=h_{ii}-a_i$ and $v_j=a_j$ cover $H$ and have
total cost $\sum_i h_{ii}=\tau(M)$. Restore the original column order to
obtain potentials covering $M$ with the same cost.

These potentials may be signed. Put $c=\min_i u_i$ and replace them by
$u_i-c$ and $v_j+c$. The first vector is nonnegative. A row attaining
$u_i=c$ gives $v_j+c\ge m_{ij}\ge0$ for every $j$, so the second vector
is also nonnegative. The covering inequalities and total cost are unchanged.
\end{proof}

\subsection{Ordinary villainy and the dominant size class}

A size class consisting of $m$ parts of size $d$ contains $md$ vertices.
It is \emph{dominant} when $md>n/2$; at most one such class exists. The next
theorem evaluates the contribution of this class. The size-class
reduction and its maximum-weight matching interpretation already
appear in Raleigh's thesis~\cite[Chapter~II]{raleigh2018}.

We first identify a known extremal case. Let $\mathfrak G$ be the subgroup
of permutations of $V(G)$ that map each part bijectively onto a part of
the same size. Its Hamming covering radius in the full symmetric group
is the largest distance of a vertex permutation from its nearest element
of $\mathfrak G$. This radius equals $B(G)$.
Indeed, fix a proper coloring $c$ constant on the parts.
Every rearrangement has the form $c\circ\pi$, and every count-preserving
proper target has the form $c\circ\gamma$ with $\gamma\in\mathfrak G$.
For a fixed target, retain the partial bijection $\gamma(v)=\pi(v)$
at every color agreement and extend it within the corresponding parts.
This extension has exactly those point agreements. Thus the minimum
permutation distance equals the minimum color distance, and the maxima
also agree. The orbits of $\mathfrak G$ are exactly the unions of all
parts of each fixed size.
Cameron and Wanless~\cite[Theorem~14]{cameronwanless2005} therefore give
the following criterion. Raleigh~\cite[Theorem~1]{raleigh2018} also
proved the full-change conclusion directly when no size class contains
more than half the vertices.

\begin{proposition}[Cameron--Wanless]\label{cor:mp:full-change}
For a complete multipartite graph $G$ on $n$ vertices, $B(G)=n$ if and
only if every size class contains at most $n/2$ vertices.
\end{proposition}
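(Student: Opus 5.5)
The plan is to work through the reduction to permutation groups set up just above, and not to prove the covering-radius theorem from scratch. We have already shown that $B(G)$ equals the Hamming covering radius of $\mathfrak G$ in $\mathrm{Sym}(V(G))$, and that the orbits of $\mathfrak G$ are the size classes. What remains is to check that $\mathfrak G$ satisfies the hypotheses of \cite[Theorem~14]{cameronwanless2005}. That theorem characterizes subgroups of $S_n$ with covering radius $n$, that is, the groups for which some permutation is a derangement relative to every element of the group. The condition is that no orbit has more than $n/2$ points. Translating back gives the stated equivalence. For a self-contained argument, I would prove the two directions directly in the matrix language of \eqref{eq:mp:matrix-ordinary}.

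\emph{Necessity.} Suppose a size class $\Gamma$ of $m$ parts of size $d$ has $md>n/2$. For any $M\in\mathcal T(\mathbf p)$, the block $M[\Gamma,\Gamma]$ has total at least $md-(n-md)=2md-n>0$. The reason is that the rows of $\Gamma$ carry $md$ units, and at most $n-md$ of these can lie in columns outside $\Gamma$. A nonzero nonnegative block has $\tau\ge1$, because some permutation of $\Gamma$ picks up a positive entry; this follows from Birkhoff, or more simply because $\max_\pi\sum_i m_{i\pi(i)}\ge\frac1m\sum_{i,j}m_{ij}>0$. Hence every disturbance retains a vertex, and $B(G)<n$.

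\emph{Sufficiency.} Assume every size class has at most $n/2$ vertices. We need some $M\in\mathcal T(\mathbf p)$ with $m_{ij}=0$ whenever $i,j$ lie in the same size class. Then every allowed assignment retains nothing and $B(G)=n$. I would invoke \cref{lem:transport} with $\mathbf a=\mathbf b=\mathbf p$ and capacities $c_{ij}=0$ inside a size class and $c_{ij}=N=n$ otherwise. If $I$ and $J$ are not both contained in the same single class, then some pair $(i,j)\in I\times J$ has capacity $n$, and \eqref{eq:transport-cut} holds trivially since $a(I)+b(J)-n\le n$. If $I,J\subseteq\Gamma$ for one class $\Gamma$, the right side is $0$. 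In that case $a(I)+b(J)-n\le 2|\Gamma|_{\mathrm{vert}}-n\le0$ by hypothesis, where $|\Gamma|_{\mathrm{vert}}$ is the number of vertices in $\Gamma$. So the matrix exists.

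The only delicate point is the case analysis of cuts: the empty $I$ or $J$ is trivial, and mixed classes give infinite capacity. This is routine, so I expect no real obstacle. The main care lies in correctly citing the identification of $B(G)$ with the covering radius, which was done above.
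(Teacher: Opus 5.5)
Your proposal is correct. Its first route matches the paper's: the paper gives no independent proof. It identifies $B(G)$ with the Hamming covering radius of $\mathfrak G$, observes that the orbits of $\mathfrak G$ are the size classes, and quotes \cite[Theorem~14]{cameronwanless2005}.

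Your self-contained matrix argument is a genuinely different route, and it works as written.
\begin{itemize}
\item Necessity is the same mass count that opens the proof of \cref{lem:mp:dominant-block}. The diagonal block of a dominant class carries at least $2md-n>0$ units, so for every disturbance some allowed assignment retains a vertex. Your averaging bound already gives $\tau\ge1$, so Birkhoff is not needed; extending one positive entry to a permutation of $\Gamma$ also suffices.
\item Sufficiency is a Hall-type check of \eqref{eq:transport-cut}, with zero capacities on the diagonal size-class blocks. Your case split is complete: an empty side, a cross-class pair in $I\times J$, or $I,J$ both inside one class $\Gamma$. The hypothesis enters only in the last case.
\end{itemize}

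Your route makes the proposition independent of both the partial-bijection identification with the covering radius and the external theorem. It then follows from \eqref{eq:mp:matrix-ordinary} and \cref{lem:transport}, the same tools used throughout \cref{sec:mp}. It also produces an explicit disturbance matrix whose diagonal size-class blocks vanish. The paper's route instead places the criterion inside the general orbit condition for covering radius $n$ of arbitrary permutation groups. That is where the equivalence originally comes from, and it explains why only the total masses of the orbits (size classes) matter.
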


\begin{theorem}\label{thm:mp:ordinary}
If every size class contains at most $n/2$ vertices, then $B(G)=n$.
Otherwise, let $(m,d)$ be the parameters of the unique size class with
$md>n/2$, and put
\[
 P=n-md,\qquad t=md-P,\qquad
 b=d-m\left\lfloor\frac d m\right\rfloor.
\]
If $P\ge mb$, then
\begin{equation}\label{eq:mp:raleigh-regime}
 B(G)=n-\left\lceil\frac t m\right\rceil
 \qquad\text{\cite[Theorem~4]{raleigh2018}}.
\end{equation}
For the remaining case $P<mb$, and for $0\le r,s<m$, set $h=m-r$,
$\ell=m-s$, and
\begin{equation}\label{eq:mp:ordinary-q}
\begin{split}
 q_{r,s}=\max\bigg\{&0,
 \left\lceil\frac{t-m(r+s)}{m^2}\right\rceil,
 \left\lceil\frac{t-dr-hs}{mh}\right\rceil,\\
 &\left\lceil\frac{t-ds-\ell r}{m\ell}\right\rceil,
 \left\lceil\frac{t-d(r+s)}{h\ell}\right\rceil\bigg\}.
\end{split}
\end{equation}
Then
\begin{equation}\label{eq:mp:ordinary}
 B(G)=n-F(m,d,t),\qquad
 F(m,d,t)=\min_{0\le r,s<m}(mq_{r,s}+r+s).
\end{equation}
In particular, the formula uses at most $k^2$ arithmetic candidates.
\end{theorem}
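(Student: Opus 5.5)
The plan is to reduce \eqref{eq:mp:ordinary-matrix} to an optimization over the dominant block alone, dualize that block with \cref{lem:assignment-duality}, and then show that optimal potentials can be taken to have a two-level form indexed by $(r,s)$. The first case of the statement is \cref{cor:mp:full-change}, and the regime $P\ge mb$ is \cite[Theorem~4]{raleigh2018}. I will cite the latter, and also check that it agrees with \eqref{eq:mp:ordinary}.

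\emph{Step 1 (reduction to the dominant block).} Let $\Gamma$ be the dominant class and $D=M[\Gamma,\Gamma]$. In any $M\in\mathcal T(\mathbf p)$, the rows of $\Gamma$ send at most $P$ units outside the columns of $\Gamma$. Hence $D$ is an $m\times m$ nonnegative integer matrix with row and column sums at most $d$ and total at least $t=md-P$. The non-dominant diagonal blocks contribute nonnegatively. Lowering entries of $D$ does not increase $\tau$, so the objective is at least
\[
\min\{\tau(D): \text{row/column sums}\le d,\ \textstyle\sum D=t\}.
\]
Conversely, take any such $D$ with total exactly $t$. Set every entry of the block indexed by the non-$\Gamma$ parts to zero. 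Let the $\Gamma$-row deficits $d-\sum_j d_{ij}$, of total $P$, fill the non-$\Gamma$ columns, whose sizes total $P$, in any way. Do the same for the $\Gamma$-column deficits and the non-$\Gamma$ rows. This gives a matrix in $\mathcal T(\mathbf p)$ whose non-dominant diagonal blocks are zero. Hence
\[
B(G)=n-F,\qquad F=\min\Bigl\{\tau(D):\ D\mathbf1\le d\mathbf1,\ D^{\mathsf T}\mathbf1\le d\mathbf1,\ \textstyle\sum D\ge t\Bigr\}.
\]

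\emph{Step 2 (dualization).} By \cref{lem:assignment-duality}, $F$ is the minimum of $\sum u_i+\sum v_j$ over nonnegative integer potentials for which some admissible $D$ satisfies $d_{ij}\le u_i+v_j$. The bounded-margin form of \cref{lem:transport} (the min-cut remark after it) says that such a $D$ with total at least $t$ exists if and only if
\[
d(m-|I|)+d(m-|J|)+\sum_{i\in I,\,j\in J}(u_i+v_j)\ \ge\ t\qquad(I,J\subseteq[m]).
\]
Thus $F$ becomes a purely arithmetic minimization over integer vectors $\mathbf u,\mathbf v$.

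\emph{Step 3 (normalization to two levels; the main obstacle).} I would show that an optimal pair can be chosen with $u_i\in\{a,a+1\}$ and $v_j\in\{c,c+1\}$. Let $r$ rows and $s$ columns carry the larger value, and put $q=a+c$, so the cost is $mq+r+s$. Using the shift invariance $u\mapsto u-\mathbf1$, $v\mapsto v+\mathbf1$, the normalization would come from an exchange argument. If some $u_i-u_{i'}\ge2$, move one unit from $u_i$ to $u_{i'}$. For each fixed $|I|$, the cut left side is minimized by taking $I$ to be the rows with the smallest potentials, and the left side is a concave-type function of the sorted potentials. The claim to verify is that this transfer does not decrease the minimum over cuts of each size. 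This balancing step is where I expect the real work, and it may need the dominance assumption $md>n/2$ to rule out the rows with larger potentials entering the minimizing cut. If $r=m$, absorb it into $a$, which gives $0\le r,s<m$.

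For two-level potentials, it suffices to check the extremal cuts. These are $I$ all rows or the $h=m-r$ low rows, and $J$ all columns or the $\ell=m-s$ low columns; intermediate cuts are dominated by linearity in the counts. The four inequalities are, in order: the full cut $m^2q+m(r+s)\ge t$; the cut $(\text{low rows},\text{all columns})$, giving $dr+mhq+hs\ge t$; its transpose, giving $ds+m\ell q+\ell r\ge t$; and $(\text{low rows},\text{low columns})$, giving $d(r+s)+h\ell q\ge t$. Together with $q\ge0$, these are exactly $q\ge q_{r,s}$ from \eqref{eq:mp:ordinary-q}. Minimizing over $(r,s)$ gives \eqref{eq:mp:ordinary}, with $m^2\le k^2$ candidates.

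Finally, in the regime $P\ge mb$, I would check that the choice $r=s=0$ (or the balanced choice for $d\bmod m$) attains $m\lceil t/m^2\rceil$-type values matching $\lceil t/m\rceil$. This shows that Raleigh's formula is the specialization of \eqref{eq:mp:ordinary} and does not conflict with it.
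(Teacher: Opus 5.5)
Your proposal follows the paper's proof essentially step for step. It uses the same reduction to the dominant $m\times m$ block (\cref{lem:mp:dominant-block}), the same dualization with cut capacity $d(2m-|I|-|J|)+|J|u(I)+|I|v(J)$, the same balancing to two-level potentials, and the same corner cuts producing \eqref{eq:mp:ordinary-q}.

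The balancing step you flagged as the main obstacle is elementary and needs no dominance hypothesis. For fixed $|I|,|J|$, the minimum cut depends on $\mathbf u$ only through the sum of its $|I|$ smallest entries. A unit transfer across a gap of at least two never decreases such a sum: take a minimizing subset that does not contain the donor without the receiver. This is \cref{lem:mp:balancing}.

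Your planned consistency check with Raleigh's regime is unnecessary, since nothing in this derivation uses $P<mb$. Note also that $r=s=0$ alone gives $m\lceil t/m^2\rceil$, which can exceed $\lceil t/m\rceil$.
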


Raleigh~\cite[Theorems~2--3]{raleigh2018} obtained bounds in the
remaining regime $P<mb$. Formula~\eqref{eq:mp:ordinary} evaluates
that regime. The uniform-part problem was determined independently
by Dinitz, Hartman, and Soprunova~\cite{dinitz2012}; its extension
to rectangular transportation matrices is given in~\cite{gajula2015}.
We do not give separate proofs of these previously known formulas.

We isolate the matrix problem behind the remaining case. For integers
$0\le t\le md$, define
\begin{equation}\label{eq:mp:block}
 F(m,d,t)=\min\left\{\tau(H):
 \begin{array}{l}
 H\in\mathbb Z_{\ge0}^{m\times m},\quad
 H\mathbf1\le d\mathbf1,\quad H^{\mathsf T}\mathbf1\le d\mathbf1,\\
 \displaystyle\sum_{i,j}h_{ij}=t
 \end{array}\right\}.
\end{equation}
The two meanings of $F$ in \eqref{eq:mp:ordinary} and
\eqref{eq:mp:block} will be shown to agree.

\begin{lemma}\label{lem:mp:dominant-block}
If a size class with parameters $(m,d)$ has $md>n/2$, then
$B(G)=n-F(m,d,2md-n)$, with $F$ defined by \eqref{eq:mp:block}.
\end{lemma}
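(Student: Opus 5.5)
The plan is to start from the matrix formula \eqref{eq:mp:matrix-ordinary}. Let $\Gamma\subseteq[k]$ be the dominant size class, with $|\Gamma|=m$ and common part size $d$. Put $P=n-md$ and $t=2md-n=md-P$; the hypothesis $md>n/2$ gives $0<t\le md$. It then suffices to show
\[
 \min_{M\in\mathcal T(\mathbf p)}\sum_{\Gamma'}\tau(M[\Gamma',\Gamma'])=F(m,d,t),
\]
with $F$ as in \eqref{eq:mp:block}. I will prove the two inequalities separately.

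\emph{Lower bound.} I first note that $F(m,d,\cdot)$ is nondecreasing. Take a matrix $H'$ with total $t'\ge t$ and margins bounded by $d$. Decreasing some of its entries yields a matrix $H$ of total exactly $t$ whose margins are still bounded by $d$. Since all entries are nonnegative, decreasing entries cannot increase any assignment sum, so $\tau(H)\le\tau(H')$.

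Now fix $M\in\mathcal T(\mathbf p)$ and put $H=M[\Gamma,\Gamma]$. Each row and each column of $H$ is part of a row or column of $M$ whose sum is $d$, so the margins of $H$ are at most $d$. The rows indexed by $\Gamma$ have total $md$ in $M$. Of this, at most $\sum_{j\notin\Gamma}p_j=P$ lies in columns outside $\Gamma$, because those columns have total $P$. Hence $\sum H\ge md-P=t$. The other size classes contribute nonnegative scores, so the sum in \eqref{eq:mp:matrix-ordinary} is at least $\tau(H)\ge F(m,d,\sum H)\ge F(m,d,t)$.

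\emph{Upper bound.} Choose $H$ attaining $F(m,d,t)$. I will embed it as $M[\Gamma,\Gamma]$ in some $M\in\mathcal T(\mathbf p)$ whose block $M[\Gamma^c,\Gamma^c]$ is identically zero. Then every non-dominant principal block is zero and has score $0$, and the objective equals $\tau(H)$.

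The row deficits $\rho_i=d-\sum_j h_{ij}$ for $i\in\Gamma$ are nonnegative, and they total $md-t=P$. So do the column deficits $\sigma_j$. Fill $M[\Gamma,\Gamma^c]$ with any nonnegative integer matrix having row sums $\rho_i$ and column sums $p_j$ for $j\notin\Gamma$; for instance the northwest-corner rule works, since both totals equal $P$. Fill $M[\Gamma^c,\Gamma]$ in the same way with row sums $p_i$ and column sums $\sigma_j$, and set $M[\Gamma^c,\Gamma^c]=0$.

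Checking the margins is routine. A row in $\Gamma$ has sum $(d-\rho_i)+\rho_i=d$, and a row outside $\Gamma$ has sum $p_i+0$; columns are symmetric. Hence $M\in\mathcal T(\mathbf p)$.

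The only point needing care is the monotonicity of $F$ used in the lower bound, since a disturbance may place more than $t$ vertices inside the dominant block. The entry-decreasing argument above handles this. When $P=0$, both fillings are empty and the statement is immediate.
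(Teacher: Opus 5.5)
Your proof is correct and follows essentially the same route as the paper. For the lower bound, the block total is at least $md-P=t$, and deleting units cannot raise $\tau$; you state this as monotonicity of $F$, where the paper argues it inline. For the upper bound, you embed an optimal $H$, fill the two off-diagonal blocks using its row and column deficits, and set the outside-to-outside block to zero, exactly as the paper does.
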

\begin{proof}
Write $S=md$, $P=n-S$, and $t=S-P$. In any matrix
$M\in\mathcal T(\mathbf p)$, the block $H$ belonging to this class
has total sum at least $S-P=t$: its rows have total $S$, whereas the
columns outside the class can receive only $P$ units. Its row and column
sums are at most $d$. Delete units from its entries until its total is
$t$. This operation cannot increase an assignment score, so
$\tau(H)\ge F(m,d,t)$. The other diagonal blocks contribute
nonnegative scores to \eqref{eq:mp:matrix-ordinary}.

Conversely, take a matrix $H$ in \eqref{eq:mp:block}. Its row deficits
from $d$ sum to $md-t=P$, exactly the total demand of the outside
columns. Fill the block from the internal rows to the outside columns
with these margins. Similarly, fill the block from the outside rows to
the internal columns using the column deficits of $H$. Both fillings
exist with integer entries: one can repeatedly place the smaller of a
remaining row supply and column demand in their intersection. Set the
outside-to-outside block to zero. The resulting matrix belongs to
$\mathcal T(\mathbf p)$, and $H$ is its only nonzero diagonal size-class
block. Minimizing over $H$ proves the assertion.
\end{proof}

The following elementary balancing fact explains why only two values
are needed in each vector of assignment potentials.

\begin{lemma}\label{lem:mp:balancing}
Among nonnegative integer vectors of a fixed length and sum, a vector
whose entries differ by at most one maximizes the sum of the smallest
$j$ entries, simultaneously for every $j$.
\end{lemma}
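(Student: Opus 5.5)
Let $\mathbf x$ be a nonnegative integer vector of length $h$ and total $N$. Let $\mathbf y$ be a balanced vector of the same length and total, so $N=ah+c$ with $0\le c<h$ and $\mathbf y$ has $h-c$ entries equal to $a$ and $c$ entries equal to $a+1$. For each $j\in\{0,\dots,h\}$ let $s_j(\mathbf x)$ denote the sum of the $j$ smallest entries of $\mathbf x$. I would prove $s_j(\mathbf x)\le s_j(\mathbf y)$ for every $j$ directly, without any exchange argument. The balanced value is explicit:
\[
 s_j(\mathbf y)=ja+\max\{0,\,j-(h-c)\}.
\]

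The main step is the averaging inequality. The $j$ smallest entries of $\mathbf x$ have average at most the average of all entries, so
\[
 s_j(\mathbf x)\le \frac{jN}{h}.
\]
Since $s_j(\mathbf x)$ is an integer, we get $s_j(\mathbf x)\le\lfloor jN/h\rfloor$. It then remains to check $s_j(\mathbf y)\ge\lfloor jN/h\rfloor$, which is pure arithmetic. Write $jN/h=ja+jc/h$.

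\emph{Case $j\le h-c$.} Here $jc\le (h-c)c$, and we need $\lfloor jc/h\rfloor=0$, that is, $jc<h$. This can fail, so the bare averaging bound is too weak in this case and I would strengthen it as follows.

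Let the sorted entries of $\mathbf x$ be $x_{(1)}\le\dots\le x_{(h)}$, and suppose $s_j(\mathbf x)\ge ja+1$. Then $x_{(j)}\ge a+1$, since otherwise all of the $j$ smallest entries would be at most $a$. Every larger entry then also exceeds $a$. Hence
\[
 N\ge (ja+1)+(h-j)(a+1)=ha+h-j+1,
\]
which gives $c\ge h-j+1$, i.e.\ $j>h-c$, a contradiction. So $s_j(\mathbf x)\le ja=s_j(\mathbf y)$ in this case.

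\emph{Case $j>h-c$.} The same pattern works. Suppose $s_j(\mathbf x)\ge s_j(\mathbf y)+1=ja+j-(h-c)+1$. If $x_{(j)}\le a$, then $s_j(\mathbf x)\le ja$, a contradiction. So $x_{(j)}\ge a+1$, and the $h-j$ larger entries also satisfy $x_{(i)}\ge a+1$. Hence
\[
 N\ge ja+j-h+c+1+(h-j)(a+1)=ha+c+1=N+1,
\]
a contradiction.

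I therefore abandon the floor-of-average route in favour of this two-case argument. It bounds $s_j(\mathbf x)$ using the total of the complementary top $h-j$ entries, each of which is at least $x_{(j)}$. Nonnegativity of the entries is not even needed.

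The only delicate point, and the one I expect to be the main obstacle, is recognising that the averaging bound alone is insufficient in the first case. The threshold argument on $x_{(j)}$ repairs this uniformly in $j$. Since a balanced vector is unique up to permutation, this establishes the simultaneous maximality claimed in \cref{lem:mp:balancing}.
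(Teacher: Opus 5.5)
Your proof is correct, and it takes a different route from the paper's.

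The paper uses a smoothing argument. If a largest entry exceeds a smallest one by at least two, moving one unit from the former to the latter does not decrease the minimum sum of $j$ entries, for any $j$; the reason is that one can choose a minimizing $j$-set after the transfer that does not contain the donor without the receiver. Repeating such transfers must terminate, since the sum of squares strictly decreases, and the process ends at a balanced vector.

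You instead compare directly with the explicit balanced vector. You compute $s_j(\mathbf y)=ja+\max\{0,j-(h-c)\}$ and bound $s_j(\mathbf x)$ through the order statistic $v=x_{(j)}$. In effect you use $s_j(\mathbf x)\le\min\{jv,\,N-(h-j)v\}$, and for every integer $v$ this is at most $s_j(\mathbf y)$: take $v\le a$ in the first term and $v\ge a+1$ in the second. Your two cases verify exactly this.

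What your route buys:
\begin{itemize}
\item It avoids the iteration and termination argument.
\item It states explicitly that the balanced vector is unique up to order, which the paper leaves implicit.
\item It does not use nonnegativity.
\item It produces exactly the closed form $iq_1+\max(0,i-h)$ that the proof of \cref{thm:mp:ordinary} needs immediately afterwards.
\end{itemize}
What the paper's route buys is a stronger stepwise fact: every single transfer is monotone for all $j$ simultaneously. This matches the exchange style of \cref{lem:mp:ordered-potentials}.

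In a final write-up, you can delete the exploratory discussion of the floor-of-average bound.
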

\begin{proof}
Suppose a largest entry $a$ and a smallest entry $b$ satisfy
$a\ge b+2$. Transfer one unit from $a$ to $b$. Among the subsets of
$j$ entries with minimum sum after the transfer, choose one that does
not contain the donor without the receiver. Such a choice is possible
because the receiver's new value $b+1$ is at most the donor's new value
$a-1$. On this chosen subset the transfer has not decreased the sum.
Its old sum was at least the old minimum, so the minimum sum of $j$
entries has not decreased. This holds for every $j$. Repeating the
operation terminates, since the sum of squares decreases at each step,
and the final vector has entries differing by at most one.
\end{proof}

\begin{proof}[Proof of \cref{thm:mp:ordinary}]
The case without a dominant size class is
\cref{cor:mp:full-change}, and the case $P\ge mb$ is
Raleigh's theorem quoted in~\eqref{eq:mp:raleigh-regime}.

It remains to treat $md>n/2$ and $P<mb$.
By \cref{lem:mp:dominant-block}, it remains
to evaluate \eqref{eq:mp:block}. For nonnegative integer vectors
$\mathbf u,\mathbf v$ of length $m$, let $\Phi(\mathbf u,\mathbf v)$
be the largest total sum of an integer matrix whose row and column
sums are at most $d$ and whose $(i,j)$ entry is at most $u_i+v_j$.
Integral assignment duality, \cref{lem:assignment-duality}, gives
\begin{equation}\label{eq:mp:block-dual}
 F(m,d,t)=\min\left\{\sum_i u_i+\sum_jv_j:
 \mathbf u,\mathbf v\in\mathbb Z_{\ge0}^m,
 \ \Phi(\mathbf u,\mathbf v)\ge t\right\}.
\end{equation}
Indeed, optimal potentials for a matrix in \eqref{eq:mp:block} give
a feasible pair on the right with the same assignment value.
Conversely, if the capacities admit at least $t$ units, an integral
flow can be reduced to exactly $t$ units. The resulting matrix has
every assignment score bounded by the total potential cost.

In the network defining $\Phi$, source-to-row and column-to-sink
capacities are all $d$. A cut with source-side rows $I$ and sink-side
columns $J$ has capacity
\begin{equation}\label{eq:mp:block-cut}
 d(2m-|I|-|J|)+|J|\sum_{i\in I}u_i+|I|\sum_{j\in J}v_j.
\end{equation}
For fixed $|I|$ and $|J|$, the minimum is obtained by choosing the
smallest potentials on both sides. \cref{lem:mp:balancing}
therefore shows that balancing $\mathbf u$ and $\mathbf v$ separately
cannot decrease any minimum cut and hence cannot decrease $\Phi$.
It preserves the objective in \eqref{eq:mp:block-dual}.

Write the balanced row potentials as $q_1$ and $q_1+1$, with $r$ entries
of the larger value, and the balanced column potentials as $q_2$ and
$q_2+1$, with $s$ entries of the larger value. Here $0\le r,s<m$.
Put
\[
 q=q_1+q_2,\qquad h=m-r,\qquad \ell=m-s,\qquad T=mq+r+s.
\]
The entry capacities are $q,q+1,q+2$, and their total potential cost is
$T$. Every nonnegative integer $q$ is possible, for example by choosing
$q_1=q$ and $q_2=0$.

The sum of the smallest $i$ row potentials is
$iq_1+\max(0,i-h)$; the corresponding column sum is
$jq_2+\max(0,j-\ell)$. Substituting these expressions into
\eqref{eq:mp:block-cut} gives a bilinear function of $i,j$ on each
rectangle separated by the lines $i=h$ and $j=\ell$. On a rectangle,
a bilinear function has a minimum at a corner: first minimize in one
coordinate and then in the other. It is thus enough to test
$i\in\{0,h,m\}$ and $j\in\{0,\ell,m\}$. The resulting flow value is
\begin{equation}\label{eq:mp:balanced-flow}
\begin{split}
 \Phi(q,r,s)=\min\{&md,\ mT,\ dr+(m-r)(T-r),\\
                  &ds+(m-s)(T-s),\ d(r+s)+q(m-r)(m-s)\}.
\end{split}
\end{equation}
Because $t\le md$, the first expression imposes no restriction. The
other four inequalities $\Phi(q,r,s)\ge t$ give, after solving for the
integer $q\ge0$, precisely \eqref{eq:mp:ordinary-q}. Substitution into
\eqref{eq:mp:block-dual} proves \eqref{eq:mp:ordinary}.
\end{proof}

For distinct part sizes, the theorem reduces to the formula
\[
 B(K_{p_1,\ldots,p_k})=\min\{n,2(n-\max_i p_i)\},
\]
proved by Clark et al.~\cite[Proposition~4.1]{clark2006}. We also record
the complete bipartite specialization for later use; it is a special
case of their connected bipartite formula.

\begin{proposition}[Clark et al.~\cite{clark2006}]
\label{cor:mp:bipartite}
If $1\le a<b$, then $B(K_{a,b})=2a$. For $a\ge1$,
$B(K_{a,a})=2\lfloor a/2\rfloor$.
\end{proposition}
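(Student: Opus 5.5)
We argue directly from the matrix formulation \eqref{eq:mp:matrix-ordinary}, which for two parts reduces everything to a one-parameter family of $2\times2$ matrices. Alternatively, the first case follows from the regime \eqref{eq:mp:raleigh-regime} of \cref{thm:mp:ordinary}. The size class of $b$ has $m=1$ and $d=b$, so it is dominant. Then $P=a$, $t=b-a$ and $b=0$ in the notation there, hence $P\ge mb$ and $B=a+b-(b-a)=2a$. The direct computation below is self-contained, so we use it for both cases.

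\emph{Case $1\le a<b$.} Here $\mathbf p=(a,b)$, and every $M\in\mathcal T(\mathbf p)$ has the form
\[
M=\begin{pmatrix}a-x&x\\x&b-x\end{pmatrix},\qquad 0\le x\le a,
\]
because row~1 and column~1 both sum to $a$, which forces $m_{12}=m_{21}$. The two size classes are the singletons $\{1\}$ and $\{2\}$. The only allowed assignment is therefore the identity, and the retained count is $\sum_\Gamma\tau(M[\Gamma,\Gamma])=(a-x)+(b-x)=a+b-2x$. This is minimized at $x=a$, giving $b-a$. Hence $B(K_{a,b})=(a+b)-(b-a)=2a$.

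\emph{Case $a=b$.} Now there is a single size class $\{1,2\}$. The same margin argument gives
\[
M=\begin{pmatrix}x&a-x\\a-x&x\end{pmatrix},\qquad 0\le x\le a.
\]
Both permutations are allowed, so $\tau(M)=\max\{2x,2a-2x\}$. Over integers $x$, the minimum of this maximum is attained at $x=\lfloor a/2\rfloor$ or $x=\lceil a/2\rceil$, and it equals $2\lceil a/2\rceil$. Consequently
\[
B(K_{a,a})=2a-2\lceil a/2\rceil=2\lfloor a/2\rfloor.
\]

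The only point needing care is the parametrization of $\mathcal T(\mathbf p)$: equal first row and column sums force the off-diagonal entries to coincide. The rest is a one-variable minimization over integers, so I expect no real obstacle.
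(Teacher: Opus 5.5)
Your proof is correct, and it takes a different route from the paper. The paper gives no argument of its own here. It simply records the statement as a special case of Clark et al.'s formula for connected bipartite graphs.

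You instead specialize the size-class matrix model \eqref{eq:mp:matrix-ordinary}. With two parts, equal first row and column sums force $m_{12}=m_{21}$, so $\mathcal T(\mathbf p)$ is a one-parameter family. Each case then becomes a one-variable integer minimization: $a+b-2x$ when $a<b$, where only the identity is allowed, and $\max\{2x,2a-2x\}$ when $a=b$.

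What each route buys:
\begin{itemize}
\item Your route is self-contained within the paper, since \eqref{eq:mp:matrix-ordinary} is derived there. It also exhibits the extremal disturbances explicitly: $x=a$, respectively $x=\lfloor a/2\rfloor$.
\item The citation route covers every connected bipartite graph, not just complete ones. However, it rests on an external proof.
\end{itemize}

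Your side remark deriving the case $a<b$ from the regime \eqref{eq:mp:raleigh-regime} of \cref{thm:mp:ordinary} is arithmetically right ($m=1$, $d=b$, $P=a$, $t=b-a$, remainder $0$). But that regime is Raleigh's theorem, which the paper only cites, so you are right to rely on the direct computation instead. If you keep the remark, rename the remainder: it is called $b$ in \cref{thm:mp:ordinary}, which clashes with the part size $b$.
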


\subsection{Weak villainy and ordered potentials}

For weak repairs, every permutation is allowed, and the separate size
classes no longer decouple. Nevertheless, the transportation formulation
gives a finite formula with substantially fewer variables than the
original matrix problem.

Assume $p_1\le\cdots\le p_k$, and define
\begin{equation}\label{eq:mp:shapes}
 \mathcal H(\mathbf p)=\left\{\mathbf h\in\mathbb Z_{\ge0}^{k}:
 \begin{array}{l}
 0=h_1\le\cdots\le h_k,\\
 h_j-h_i\le\max\{1,p_j-p_i\}\quad(1\le i<j\le k)
 \end{array}\right\}.
\end{equation}
We call the elements of $\mathcal H(\mathbf p)$ \emph{potential shapes}.
In particular, $h_i\le p_i$, and entries belonging to one size class
differ by at most one.
For a real vector $\mathbf h$ and $1\le r,s\le k$, let
$A_{r,s}(\mathbf h)$ be the sum of the largest $r$ entries of
$(p_i-sh_i)_{i=1}^k$. In particular, negative entries are retained in
this definition. For $\mathbf h,\boldsymbol\ell\in\mathcal H(\mathbf p)$,
put
\begin{equation}\label{eq:mp:weak-Q}
 Q(\mathbf h,\boldsymbol\ell)=
 \max\left\{0,\max_{1\le r,s\le k}
 \left\lceil\frac{A_{r,s}(\mathbf h)+A_{s,r}(\boldsymbol\ell)-n}{rs}
 \right\rceil\right\}.
\end{equation}

\Needspace{9\baselineskip}
\begin{theorem}\label{thm:mp:weak}
Let $\mathbf p=(p_1,\ldots,p_k)$ with $1\le p_1\le\cdots\le p_k$
and $n=\sum_i p_i$. Then $B_{\mathrm w}(K_{\mathbf p})=n-\mu(\mathbf p)$,
where
\begin{equation}\label{eq:mp:weak}
 \mu(\mathbf p)=
 \min_{\mathbf h,\boldsymbol\ell\in\mathcal H(\mathbf p)}
 \left(kQ(\mathbf h,\boldsymbol\ell)+\sum_i h_i+\sum_i\ell_i\right).
\end{equation}
An attaining pair of vectors determines an extremal rearrangement by
an integral flow construction.
\end{theorem}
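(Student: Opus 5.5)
By \eqref{eq:mp:matrix-weak} it suffices to evaluate $\mu(\mathbf p)=\min_{M\in\mathcal T(\mathbf p)}\tau(M)$. The plan is to dualize $\tau$ by \cref{lem:assignment-duality}, exchange the two minimizations, and then apply the transportation criterion \cref{lem:transport}. This gives
\[
 \mu(\mathbf p)=\min\Bigl\{\textstyle\sum_iu_i+\sum_jv_j:\ \mathbf u,\mathbf v\in\mathbb Z_{\ge0}^k,\ \exists M\in\mathcal T(\mathbf p),\ m_{ij}\le u_i+v_j\Bigr\}.
\]
One direction takes optimal covering potentials of a minimizing $M$. For the other, any admissible $M$ satisfies $\tau(M)\le\sum u_i+\sum v_j$.

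Next, apply \cref{lem:transport} with capacities $c_{ij}=u_i+v_j$. Existence of $M$ is equivalent to
\[
 p(I)+p(J)-n\le|J|\,u(I)+|I|\,v(J)\qquad (I,J\subseteq[k]).
\]
For $|I|=r$ and $|J|=s$, this reads $\sum_{i\in I}(p_i-su_i)+\sum_{j\in J}(p_j-rv_j)\le n$. The worst choice of $I$ and $J$ takes the largest entries, so the condition becomes $A_{r,s}(\mathbf u)+A_{s,r}(\mathbf v)\le n$ for all $1\le r,s\le k$. The cases with $r=0$ or $s=0$ are trivial, because $p(I)\le n$.

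Now write $\mathbf u=a\mathbf1+\mathbf h$ and $\mathbf v=b\mathbf1+\boldsymbol\ell$ with $a=\min_iu_i$ and $b=\min_jv_j$. Then $A_{r,s}(\mathbf u)=A_{r,s}(\mathbf h)-rsa$, and similarly for $\mathbf v$. So for fixed $\mathbf h,\boldsymbol\ell$, only $c=a+b$ matters, and the least admissible integer value is exactly $Q(\mathbf h,\boldsymbol\ell)$ from \eqref{eq:mp:weak-Q}. The resulting cost is $kQ+\sum h_i+\sum\ell_i$. Since the two potential vectors are decoupled, it then remains to show that $\mathbf h$ (and likewise $\boldsymbol\ell$) may be taken in $\mathcal H(\mathbf p)$ without increasing $Q$ or $\sum h_i$. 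It is enough to show that every $A_{r,s}$ does not increase. Here I use the fact that replacing two entries of the vector $(p_i-sh_i)$ by a pair with the same sum lying between them cannot increase any top-$r$ sum.

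The main obstacle is this exchange step, which has two parts.

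First, ordering. If $p_i\le p_j$ but $h_i>h_j$, swap $h_i$ and $h_j$. The new entries $p_i-sh_j$ and $p_j-sh_i$ lie between the old ones, $p_i-sh_i$ (the minimum) and $p_j-sh_j$ (the maximum). The sum is preserved, so by majorization no top-$r$ sum increases. Among equal $p$'s we order arbitrarily.

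Second, gaps. Suppose $i<j$ and $h_j-h_i\ge\max\{1,p_j-p_i\}+1$. Move one unit from $h_j$ to $h_i$. Since $s(h_j-h_i)>p_j-p_i$, the old minimum entry is $p_j-sh_j$. The new pair $p_j-sh_j+s$ and $p_i-sh_i-s$ stays within the old interval because $s(h_j-h_i-1)\ge p_j-p_i$. The gap is at least $2$, so the order is kept, and $\sum h$ is unchanged. Termination follows from the strict decrease of $\sum h_i^2$, interleaved with re-sorting. The normalization $h_1=0$ then holds by construction, since the minimum sits at index $1$.

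Finally, for the realization statement: from an attaining pair, set $\mathbf u=Q\mathbf1+\mathbf h$ and $\mathbf v=\boldsymbol\ell$. Then an integral flow as in the proof of \cref{lem:transport} produces $M\in\mathcal T(\mathbf p)$ with $m_{ij}\le u_i+v_j$, hence $\tau(M)\le\mu(\mathbf p)$. Reading $M$ as a disturbance gives the extremal rearrangement.
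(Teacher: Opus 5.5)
Your route is essentially the paper's: dualize with \cref{lem:assignment-duality}, reduce \cref{lem:transport} to the conditions $A_{r,s}(\mathbf u)+A_{s,r}(\mathbf v)\le n$, then sort and transfer across large gaps. That exchange rests on the same interval-contraction fact (no top-$r$ sum increases). You then split off the minima, read off $Q(\mathbf h,\boldsymbol\ell)$ as the least admissible constant, and realize an attaining pair by an integral flow. The one real difference is the order of operations, and it introduces a small error.

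The paper first truncates at $p_i$, which your route does not need. It then runs the exchange on $\mathbf u,\mathbf v$ themselves, and only afterwards subtracts $\min_i u_i$ and $\min_j v_j$. You subtract the minima first and then exchange on $\mathbf h$. A gap transfer can raise the minimum, though. For $\mathbf p=(1,1)$ and $\mathbf h=(0,5)$, your procedure passes through $(1,4)$ and stops at $(2,3)$. So the sentence ``the normalization $h_1=0$ then holds by construction'' is false.

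The repair is one line. If the final minimum is $\alpha>0$, replace $\mathbf h$ by $\mathbf h-\alpha\mathbf 1$ and the constant $c$ by $c+\alpha$. The underlying potentials $c\mathbf 1+\mathbf h$ are unchanged, so feasibility and cost are unchanged. Equivalently, $Q(\mathbf h-\alpha\mathbf 1,\boldsymbol\ell)\le Q(\mathbf h,\boldsymbol\ell)+\alpha$ while $\sum_i h_i$ drops by $k\alpha$. Alternatively, apply your exchange directly to $\mathbf u$ and $\mathbf v$, as the paper does, since it never used $\min=0$.

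Accordingly, the goal of your reduction should be that $kQ+\sum_i h_i+\sum_i\ell_i$ does not increase, not that $Q$ and $\sum_i h_i$ separately do not increase. The final shift can raise $Q$: with $\boldsymbol\ell=(0,0)$ above, $Q$ goes from $0$ at $(2,3)$ to $1$ at $(0,1)$. With this adjustment your proof is complete.
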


The required reduction of the potentials uses the following exchange
argument.

\begin{lemma}\label{lem:mp:ordered-potentials}
Suppose $0\le u_i,v_i\le p_i$ are integers and some matrix in
$\mathcal T(\mathbf p)$ satisfies $m_{ij}\le u_i+v_j$.
Without changing the sum of either vector or losing the existence of
such a matrix, the potentials can be chosen so that, for every $i<j$,
\[
 0\le u_j-u_i\le\max\{1,p_j-p_i\},\qquad
 0\le v_j-v_i\le\max\{1,p_j-p_i\}.
\]
The bounds $0\le u_i,v_i\le p_i$ are preserved.
\end{lemma}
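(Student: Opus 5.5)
The plan is to replace the existence of the matrix by the cut inequalities of \cref{lem:transport}, and then argue only with those inequalities. Take rows and columns both with margins $\mathbf p$ and capacities $c_{ij}=u_i+v_j$. A matrix in $\mathcal T(\mathbf p)$ with $m_{ij}\le u_i+v_j$ exists if and only if
\[
 p(I)+p(J)-n\le |J|\,u(I)+|I|\,v(J)\qquad(I,J\subseteq[k]).
\]
The roles of $\mathbf u$ and $\mathbf v$ are symmetric under transposition, so it suffices to modify $\mathbf u$ with $\mathbf v$ held fixed. I would use two local moves on a pair $i<j$, so that $p_i\le p_j$. Each move preserves $\sum_i u_i$ and the bounds $0\le u_i\le p_i$. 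In each case I check the cut inequalities only for sets $I$ containing exactly one of $i,j$, since the other cuts are unchanged.

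\emph{Sorting move.} If $u_i>u_j$, swap the two values. Take a cut $I$ with $i\in I$ and $j\notin I$. Its new $u$-sum equals the old $u$-sum of $I''=I\setminus\{i\}\cup\{j\}$. Also $p(I'')\ge p(I)$, so the old inequality for $(I'',J)$ implies the new one for $(I,J)$. If instead $j\in I$ and $i\notin I$, the new $u$-sum has increased while $p(I)$ is unchanged, so the inequality still holds. The bounds survive because $u'_i=u_j<u_i\le p_i$ and $u'_j=u_i\le p_i\le p_j$. Repeating such swaps makes $\mathbf u$ nondecreasing, which gives $u_j-u_i\ge0$ for $i<j$.

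\emph{Gap move.} Suppose $\mathbf u$ is sorted but $u_j-u_i>\max\{1,p_j-p_i\}$ for some $i<j$. Move one unit from $u_j$ to $u_i$. A cut with $i\in I$ and $j\notin I$ only gains, so only the cuts with $j\in I$ and $i\notin I$ matter, and there we may assume $J\ne\varnothing$. I would compare such a cut with the old inequality for $I''=I\setminus\{j\}\cup\{i\}$. That inequality reads
\[
 p(I)+p(J)-n\le |J|u(I)+|I|v(J)-|J|(u_j-u_i)+(p_j-p_i).
\]
The new requirement has right side $|J|u(I)+|I|v(J)-|J|$. So it suffices that $|J|(u_j-u_i-1)\ge p_j-p_i$. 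This follows from $u_j-u_i-1\ge p_j-p_i$ and $|J|\ge1$. Since $u_j-u_i\ge2$, the monotone order survives the move, and the bounds $u_i+1\le u_j-1\le p_j$ and $u_i+1\le p_i$ need a check. The second holds because the gap bound $u_j-u_i\ge p_j-p_i+2$ together with $u_j\le p_j$ gives $u_i\le p_i-2$.

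\emph{Termination and the main obstacle.} The gap move strictly decreases $\sum u_i^2$. Each swap preserves it, and a swap only happens when the vector is unsorted. I would therefore alternate the two moves: first sort, then apply a gap move at any violating pair, re-sorting if needed (the gap move actually keeps the order). The process terminates, and the final vector satisfies both asserted bounds. The delicate step is the gap move. The threshold $\max\{1,p_j-p_i\}$ is exactly what makes the comparison cut $I''$ dominate, and the case $J=\varnothing$ must be treated separately.
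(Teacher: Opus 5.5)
Your argument is correct and is essentially the paper's proof: both reduce to the cut inequalities of \cref{lem:transport}, remove inversions by swaps, transfer one unit across any gap exceeding $\max\{1,p_j-p_i\}$, and terminate via $\sum_i u_i^2$; your exchange $I\mapsto I\setminus\{i\}\cup\{j\}$ (or the reverse) is a set-level form of the paper's observation that contracting a pair of residuals $p_i-su_i$, $p_j-su_j$ cannot increase the sum of the $r$ largest. Two slips are harmless: a violating pair only gives $u_j-u_i\ge p_j-p_i+1$, not $+2$, hence $u_i\le p_i-1$, which is all you need for $u_i+1\le p_i$; and a transfer can break monotonicity at other indices, e.g.\ $\mathbf u=(0,0,5)\mapsto(1,0,4)$, which the re-sorting step you already allow takes care of.
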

\begin{proof}
By \cref{lem:transport}, the capacities $u_i+v_j$ admit a matrix
in $\mathcal T(\mathbf p)$ exactly when
\begin{equation}\label{eq:mp:additive-cuts}
 p(I)+p(J)-n\le |J|u(I)+|I|v(J)
 \qquad(I,J\subseteq[k]),
\end{equation}
where $p(I)=\sum_{i\in I}p_i$, and similarly for $u(I),v(I)$.
For $|I|=r>0$ and $|J|=s>0$, the largest difference between the two
sides is
$A_{r,s}(\mathbf u)+A_{s,r}(\mathbf v)-n$. The choices of $I$ and
$J$ are independent. Conditions with an empty set hold automatically.

Suppose $i<j$ but $u_i>u_j$, and interchange $u_i,u_j$. For any positive
integer $s$, the old residuals
\[
 a=p_i-su_i,\qquad b=p_j-su_j
\]
satisfy $a\le b$. The two new residuals have the same sum and both lie
in $[a,b]$. Replacing a pair by two numbers of the same sum in their
old interval cannot increase the sum of the largest $r$ entries of the
whole vector. Indeed, a maximizing $r$-subset uses zero, one, or both
entries. Only the middle case needs consideration, and its contribution
is at most the larger old entry. No $A_{r,s}(\mathbf u)$ increases, so
\eqref{eq:mp:additive-cuts} remains valid. The coordinate bounds also
remain valid: the potential moved to $i$ decreases, and the potential
moved to $j$ is at most $p_i\le p_j$. Repeating removes all inversions.

After sorting, suppose that $i<j$ and
$u_j-u_i>\max\{1,p_j-p_i\}$. Put
$\delta=p_j-p_i$ and $\eta=u_j-u_i$, and transfer one unit from
$u_j$ to $u_i$. For every positive integer $s$, the old residuals
$a=p_i-su_i$ and $b=p_j-su_j$ satisfy
\[
 a-b=s\eta-\delta\ge s,
\]
because $\eta\ge\delta+1$. The new residuals are $a-s$ and $b+s$;
both lie in $[b,a]$ and their sum is unchanged. The same contraction
argument therefore preserves every cut inequality.
The coordinate bounds survive as well: $u_i=p_i$ would force
$u_j>p_j$, and $u_j\ge u_i+2$ ensures that the donor stays nonnegative.

Sort the modified vector again, and repeat whenever a pair violates
the asserted bound. Each transfer strictly decreases the sum of squares,
whereas sorting leaves it unchanged, so the procedure terminates.
The sum of the vector and feasibility are preserved at every step.
Apply the procedure first to $\mathbf u$ and then to $\mathbf v$.
\end{proof}

\begin{proof}[Proof of \cref{thm:mp:weak}]
Choose $M\in\mathcal T(\mathbf p)$ attaining $\mu(\mathbf p)$ and
nonnegative integral optimal assignment potentials $\mathbf u,\mathbf v$.
We may assume $u_i\le p_i$ and $v_j\le p_j$. To see this, truncate each
potential at its corresponding margin. If $u_i$ is truncated, its new
value $p_i$ alone is at least every entry in row $i$; the other potential
is nonnegative. The same argument applies to a truncated column
potential. Thus truncation preserves the covering inequalities and
does not increase the cost.

Apply \cref{lem:mp:ordered-potentials}. The modified potentials
need not cover the original matrix, but they cover some matrix with
the same margins, which is sufficient for minimizing
$\mu(\mathbf p)$. Subtract the minimum separately from each vector:
\[
 \alpha=\min_i u_i,\quad \beta=\min_jv_j,\quad q=\alpha+\beta,
 \qquad h_i=u_i-\alpha,\quad \ell_j=v_j-\beta.
\]
Subtracting a constant preserves all difference bounds, so
$\mathbf h,\boldsymbol\ell\in\mathcal H(\mathbf p)$. The
capacities become $q+h_i+\ell_j$, and the cost is
$kq+\sum_i h_i+\sum_j\ell_j$.

For fixed shapes $\mathbf h,\boldsymbol\ell$, substitute these
capacities into \eqref{eq:mp:additive-cuts}. For nonempty $I,J$ of
cardinalities $r,s$, the condition becomes
\[
 rsq\ge p(I)-s h(I)+p(J)-r\ell(J)-n.
\]
Maximizing the right side independently over $I$ and $J$ shows that
all cuts are feasible exactly when $q\ge Q(\mathbf h,\boldsymbol\ell)$.
Decreasing $q$ to this least value therefore preserves feasibility.
This proves that the right side of \eqref{eq:mp:weak} is at most
$\mu(\mathbf p)$.

Conversely, any shapes with $q=Q(\mathbf h,\boldsymbol\ell)$ satisfy
all capacity inequalities. \cref{lem:transport} gives an integer
matrix in $\mathcal T(\mathbf p)$ bounded by these capacities.
Every permutation score is at most
$kq+\sum_i h_i+\sum_j\ell_j$. Hence $\mu(\mathbf p)$ is at most
each candidate in \eqref{eq:mp:weak}, proving equality. For a minimizing
pair, the constructed matrix has assignment value exactly
$\mu(\mathbf p)$ and therefore yields the asserted extremal
rearrangement.
\end{proof}

The difference bounds also control the number of candidates in terms
of the spread of the part sizes. A separate consequence of the
transportation formulation is fixed-parameter tractability in the
number of parts, without a bound on that spread.

\begin{corollary}\label{cor:mp:computation}
Put $\Delta=\max\{1,p_k-p_1\}$. The formula in
\eqref{eq:mp:weak} uses at most
\[
 \binom{k+\Delta-1}{\Delta}^{\!2}
\]
candidate pairs. In particular, at most $k^2$ pairs suffice when the
part sizes differ by at most one.
Moreover, both $B(K_{\mathbf p})$ and $B_{\mathrm w}(K_{\mathbf p})$
can be computed in time $f(k)L^c$, where $L$ is the binary encoding
length of $\mathbf p$, $f$ is computable, and $c$ is an absolute constant.
An extremal disturbance matrix for weak villainy can be constructed
within such a bound.
\end{corollary}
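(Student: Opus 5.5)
The corollary makes three claims, and I would treat them in turn: the candidate count, the fixed-parameter algorithm for each parameter, and the construction of an extremal matrix.

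\textbf{Candidate count.} Each $\mathbf h\in\mathcal H(\mathbf p)$ has $h_1=0$ and is nondecreasing. Its total rise is $h_k-h_1\le\max\{1,p_k-p_1\}=\Delta$, using the defining inequality for the pair $(1,k)$. Hence $\mathbf h$ is determined by the increments $h_{i+1}-h_i\ge0$ for $i=1,\dots,k-1$, with total at most $\Delta$. Equivalently, it is a multiset of size $\Delta$ drawn from $k$ types, where the extra type absorbs the slack. This gives at most $\binom{k+\Delta-1}{\Delta}$ shapes, and squaring accounts for the pair $(\mathbf h,\boldsymbol\ell)$. When the sizes differ by at most one we have $\Delta=1$, so there are at most $k$ shapes and hence at most $k^2$ pairs.

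\textbf{Fixed-parameter tractability.} For weak villainy I would not enumerate shapes, since $\Delta$ may be exponential in $L$. Instead I would write \eqref{eq:mp:weak-Q} as an integer program in the fixed-dimensional variables $q,h_1,\dots,h_k,\ell_1,\dots,\ell_k$.

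1. Each $A_{r,s}(\mathbf h)$ is a maximum over $r$-subsets $I$ of linear forms $p(I)-s\,h(I)$.
2. The condition $q\ge Q(\mathbf h,\boldsymbol\ell)$ is therefore equivalent to the family of linear inequalities \eqref{eq:mp:additive-cuts}:
\[
 rs\,q\ge p(I)-s\,h(I)+p(J)-r\,\ell(J)-n
 \qquad\text{for all nonempty } I,J\subseteq[k].
\]
3. The constraints $q\ge0$ and the shape constraints of $\mathcal H(\mathbf p)$ are also linear.
4. The objective $kq+\sum_i h_i+\sum_i \ell_i$ is linear.

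This yields $2k+1$ variables and at most $4^k+O(k^2)$ constraints, with coefficients bounded by $k$ or by entries of $\mathbf p$. Lenstra's algorithm, in Kannan's or Frank--Tardos's form, solves integer linear programs in fixed dimension in time $f(k)L^c$. The only delicate point is that the constraint count depends on $k$ alone and the encoding length is polynomial in $L$, which holds here.

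For ordinary villainy, \cref{thm:mp:ordinary} reduces the problem to three cases: check whether a size class is dominant, evaluate Raleigh's closed form, or evaluate $F(m,d,t)$ over at most $m^2\le k^2$ candidates $q_{r,s}$. Each candidate is a constant number of integer divisions, so this is polynomial in $L$ and $k$.

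\textbf{Constructing an extremal matrix.} Given optimal $(q,\mathbf h,\boldsymbol\ell)$ from the integer program, the proof of \cref{thm:mp:weak} supplies an integral transportation problem. Its capacities are $q+h_i+\ell_j$ and its margins are $\mathbf p$ on both sides. A max-flow computation on this network with $2k+2$ vertices yields the matrix. I would use a strongly polynomial max-flow algorithm so the running time is polynomial in $k$ and $L$, rather than plain augmenting paths, which could take time proportional to $n$.

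I expect the main obstacle to be phrasing the weak formula as a genuine fixed-dimension integer program. The fix is to use the cut inequalities directly rather than the nonlinear ceilings and sorted sums in $Q$. Once that is done, the remaining steps are routine.
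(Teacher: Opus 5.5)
Your proposal is correct and follows the paper's proof. It uses the same stars-and-bars count of nondecreasing shapes with entries in $\{0,\ldots,\Delta\}$, reads the ordinary value off \cref{thm:mp:ordinary}, builds a fixed-dimension integer program from the transportation cut inequalities \eqref{eq:mp:additive-cuts} and solves it by Lenstra--Kannan, and recovers the matrix with a polynomial-time integral max-flow. The only variation is that you parametrize the program by $(q,\mathbf h,\boldsymbol\ell)$ with the shape constraints of $\mathcal H(\mathbf p)$, so its correctness rests on \cref{thm:mp:weak}; the paper instead uses raw potentials with $0\le u_i,v_i\le p_i$, justified directly by assignment duality and truncation, and it also makes explicit the binary search on the objective in $[0,n]$ that turns Lenstra's feasibility test into optimization.
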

\begin{proof}
Every shape has first entry zero and all entries at most $\Delta$.
There are $\binom{k+\Delta-1}{\Delta}$ nondecreasing choices for its
remaining $k-1$ entries in $\{0,\ldots,\Delta\}$. The defining
difference bounds can only reduce this number. If $p_k-p_1\le1$,
then $\Delta=1$.

For the algorithmic assertion, ordinary villainy is given by
\cref{thm:mp:ordinary}. For weak villainy, minimize
$\sum_i u_i+\sum_jv_j$ over integer potentials satisfying
\[
 0\le u_i,v_i\le p_i\quad(1\le i\le k)
\]
and all the inequalities \eqref{eq:mp:additive-cuts}.
The optimum of this integer program is $\mu(\mathbf p)$.
Indeed, an optimal disturbance matrix has bounded integral assignment
potentials, by the truncation argument in the preceding proof.
Conversely, the transportation criterion produces a matrix with these
margins whose assignment score is at most the potential cost.

The program has $2k$ variables and at most $4^k+4k$ inequalities.
Its encoding length is bounded by a function of $k$ times a polynomial
in $L$. Fixed-dimensional integer programming
\cite{lenstra1983,kannan1983}, with binary search on the integer
objective in $[0,n]$, therefore gives the asserted running time.
From optimal potentials, a polynomial-time integral maximum-flow
algorithm constructs an attaining matrix. This output records the
disturbance by its $k^2$ entries; expanding it into a color on every
vertex is unnecessary.
\end{proof}

If all part sizes are equal, the matrix problem is the uniform-margin
problem solved in~\cite{dinitz2012,gajula2015}. The present formulation
permits arbitrary positive margins.

\section{Extremal villainy at fixed order and chromatic number}
\label{sec:extremal}

For integers $1\le k\le n$, define
\begin{align*}
  \mathcal M(n,k)&=\max\{B(G): |V(G)|=n,\ \chi(G)=k\},\\
  \mathcal M_{\mathrm w}(n,k)
    &=\max\{B_{\mathrm w}(G): |V(G)|=n,\ \chi(G)=k\}.
\end{align*}
We now use the multipartite description in \cref{sec:mp} to determine
both maxima over all graphs of prescribed order and chromatic number.
\Cref{ext:completion} reduces the problem to complete multipartite
graphs. The ordinary extremum then uses the size-class criterion in
\cref{cor:mp:full-change}; the weak extremum uses the assignment model
\eqref{eq:mp:matrix-weak} and the integral duality of
\cref{lem:assignment-duality}. In the weak problem, balance of the
disturbance matrix imposes a further parity restriction.

The completion must preserve a proper coloring from which the
disturbance arose: arbitrary edge addition need not preserve the set
of legal disturbances.

\Needspace{6\baselineskip}
\begin{lemma}[Completion relative to a source coloring]
\label{ext:completion}
For $F\in\{B,B_{\mathrm w}\}$,
\[
  \max_{\substack{|V(G)|=n\\ \chi(G)=k}}F(G)
   =\max_{\substack{p_1+\cdots+p_k=n\\p_i\ge1}}
      F(K_{p_1,\ldots,p_k}).
\]
\end{lemma}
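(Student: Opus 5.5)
The inequality ``$\ge$'' is immediate: each $K_{p_1,\ldots,p_k}$ with positive parts has $n$ vertices and chromatic number $k$, so it appears in the left-hand maximum. For ``$\le$'' I will take a graph $G$ with $|V(G)|=n$ and $\chi(G)=k$, and realize $F(G)$ by a feasible profile $\mathbf m$, a proper coloring $c$ of $G$ with profile $\mathbf m$, and a legal disturbance $w\in\mathcal W_{\mathbf m}(G)$ whose distance to the relevant target set is $F(G)$. The comparison graph is the completion $G^*$ of $G$ relative to $c$: its parts are the color classes $c^{-1}(X)$, and any two vertices with different $c$-colors are joined. Thus $G^*\cong K_{\mathbf m}$, and all entries of $\mathbf m$ are positive because every proper $k$-coloring uses all $k$ colors. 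Since $G\subseteq G^*$, and $G^*$ is $k$-colorable and contains $G$, we get $\chi(G^*)=k$. So $G^*$ is a legitimate competitor on the right-hand side.

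The main step is to show $F(G^*)\ge F(G)$ by reusing the same disturbance. First, $w$ is still a legal disturbance for $G^*$: $c$ is a proper coloring of $G^*$ with profile $\mathbf m$, which makes $\mathbf m$ feasible for $G^*$, and $w\in\mathcal W_{\mathbf m}(G^*)$ because $\mathcal W_{\mathbf m}$ depends only on the vertex set and the profile. Second, adding edges can only shrink the family of proper colorings, so $\mathcal P(G^*)\subseteq\mathcal P(G)$ and $\mathcal P_{\mathbf m}(G^*)\subseteq\mathcal P_{\mathbf m}(G)$, with the same palette since $\chi$ is unchanged. A minimum over a subset is at least the minimum over the superset. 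Hence the distance from $w$ to the nearest target is at least as large in $G^*$, in both the ordinary and the weak sense. Taking the maximum over sources and profiles in \cref{def:repair-radii} gives $F(G^*)\ge R_{G^*}(\mathbf m)\ge F(G)$, and likewise for $R^{\mathrm w}$.

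I expect the main obstacle, which is the point of the warning before the lemma, to be making sure the profile $\mathbf m$ stays feasible after edges are added. If we completed $G$ to an arbitrary complete multipartite supergraph, its proper colorings might not realize $\mathbf m$, and then $w$ would not be a legal disturbance. Completing along the color classes of the very coloring $c$ that witnesses feasibility avoids this, and it is the only place where the choice of completion matters. The remaining edge cases are routine. The empty-graph convention arises only when $k=1$, where $G$ and $K_n$ with one part are both edgeless and both values are $0$. Relabeling colors lets us write $G^*$ as $K_{p_1,\ldots,p_k}$ with $\sum p_i=n$.
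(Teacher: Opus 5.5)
Your proposal is correct and follows the paper's proof. Both complete $G$ along the color classes of a proper coloring whose profile matches an extremal disturbance, note that the disturbance stays legal while both target families only shrink, and get the reverse inequality because complete $k$-partite graphs are themselves competitors; your extra checks ($\chi(G^*)=k$, the target set being nonempty via $c$, and the case $k=1$) are details the paper leaves implicit.
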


\begin{proof}
Fix a graph $G$ with $n$ vertices and chromatic number $k$.  Choose a
legal disturbance $w$ attaining $F(G)$, and choose a proper $k$-coloring
$f$ whose labeled color counts agree with those of $w$.  Write
$V_1,\ldots,V_k$ for the color classes of $f$.  They are nonempty,
because $\chi(G)=k$.

Add all missing edges between distinct classes, obtaining
$H=K_{|V_1|,\ldots,|V_k|}$.  The coloring $f$ remains proper on $H$, so
$w$ is still a legal disturbance.  Every proper $k$-coloring of $H$ is
proper on $G$.  This inclusion also holds after restricting to colorings
with the same labeled counts as $w$.  Thus the minimum repair cost for
$w$ cannot decrease, under either repair rule, and $F(G)\le F(H)$.
The reverse inequality between the two maxima follows because the
complete $k$-partite graphs are among the graphs on the left.
\end{proof}

\subsection{Count-preserving repairs}

\begin{theorem}
\label{ext:ordinary}
For $1\le k\le n$, one has $\mathcal M(n,1)=0$ and
\[
  \mathcal M(n,2)=2\left\lfloor\frac{n-1}{2}\right\rfloor
  \qquad(n\ge2).
\]
For $k\ge3$, put $d=n-k$.  Then
\[
  \mathcal M(n,k)=
  \begin{cases}
    n-1,& n=4d-1\ \text{or}\ (n,k)\in\{(5,3),(7,3)\},\\[2pt]
    \min\{n,4d\},&\text{otherwise}.
  \end{cases}
\]
\end{theorem}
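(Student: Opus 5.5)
By \cref{ext:completion} it suffices to maximize $B(K_{\mathbf p})$ over compositions $p_1+\cdots+p_k=n$ with $p_i\ge1$. Throughout I would evaluate $B(K_{\mathbf p})$ using \cref{cor:mp:full-change}, \cref{thm:mp:ordinary}, and \cref{cor:mp:bipartite}.

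For $k=1$ the graph is empty, so the value is $0$. For $k=2$ we have $K_{a,b}$ with $a+b=n$. If $a<b$, \cref{cor:mp:bipartite} gives $2a$, and this is at most $2\lfloor(n-1)/2\rfloor$, with equality at $a=\lfloor(n-1)/2\rfloor$. If $a=b$, the value is $2\lfloor a/2\rfloor$, which is no larger. This settles $\mathcal M(n,2)$.

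For $k\ge3$, write $m_1$ for the number of parts of size $1$ and $P=n-m_1$. The total excess over singletons is $d=n-k=P-(k-m_1)$. Every non-singleton part has size at least $2$, so $k-m_1\le P/2$, which gives $P\le 2d$ and also $m_1\ge k-d=n-2d$.

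\emph{Case $n>4d$.} Then $m_1>n/2$, so the singleton class is dominant with parameters $(m_1,1)$. In \eqref{eq:mp:block} with $d=1$, the block is a partial permutation matrix, so $F(m_1,1,t)=t$. \Cref{lem:mp:dominant-block} then gives $B=n-t=2P\le4d$. Equality holds for $d$ parts of size $2$ together with $n-2d$ singletons.

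\emph{Case $n\le4d$.} The bound $B\le n$ is trivial. I would show that $B=n$ is attainable, that is, that some composition has every size class of at most $n/2$ vertices (\cref{cor:mp:full-change}), except in the listed cases.
\begin{itemize}
\item Start from $n-2d'$ singletons and $d'$ doubletons with $d'\le d$.
\item Absorb the remaining excess into parts of size $3$ or larger, or into a few distinct larger sizes.
\item Adjust $m_1$ so that the singleton mass and each larger class stay at most $n/2$.
\end{itemize}
The arithmetic is a small case split on $n-2d$ relative to $n/2$ and on the parity of the leftover excess. I expect that it fails only when $n=4d-1$, and for $(n,k)\in\{(5,3),(7,3)\}$ by direct enumeration of the few compositions.

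\emph{Exceptional cases: upper bound.} I would show that every composition has a dominant class. For $n=4d-1$ we get $m_1\ge n-2d=2d-1=(n+1)/2>n/2$, so $B<n$ by \cref{cor:mp:full-change}. For the two small cases, list the compositions of $5$ into $3$ parts and of $7$ into $3$ parts and check each.

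\emph{Exceptional cases: lower bound $n-1$.}
\begin{itemize}
\item For $n=4d-1$, take $m_1=2d-1$ singletons and $d$ doubletons, so $P=2d$. Then $B=2P=4d=n+1$ capped by $n$. Since $B\le n$ and $B\ne n$, I have to recheck this. The formula $B=n-t$ with $t=m_1-P=-1<0$ means the singletons are not actually dominant here. This case must be handled by a direct count, and the constraint $m_1\ge 2d-1$ must be forced through parts larger than $2$.
\item For the small cases, compute directly: $K_{1,1,3}$ for $n=5$ and an analogous composition for $n=7$, using \eqref{eq:mp:raleigh-regime} or \eqref{eq:mp:ordinary} to obtain $n-1$.
\end{itemize}

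\emph{Main obstacle.} The delicate step is the $n\le4d$ regime. It requires a clean construction with no dominant class whenever one exists. It also requires the precise verification, in the boundary cases, that the best dominant-class configuration reaches exactly $n-1$, by evaluating $F$ via \eqref{eq:mp:ordinary} at $m=1$ or $2$. I would first try to handle both parts by showing that, near the boundary, the optimum always has a dominant singleton class with $t=1$, so that $F=1$ and $B=n-1$.
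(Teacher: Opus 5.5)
Your overall route matches the paper's: reduce to $K_{\mathbf p}$ by \cref{ext:completion}, decide whether $B=n$ by the size-class criterion of \cref{cor:mp:full-change}, and build suitable compositions. The cases $k\le2$ are correct. So is your bound $B=2P\le4d$ for $n>4d$, obtained from \cref{lem:mp:dominant-block} and $F(m_1,1,t)=t$. The paper gets the same bound uniformly, by noting that the singleton-to-singleton block carries mass at least $n-4d$ and can be retained.

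\textbf{The main gap.} For $n\le4d$ outside the exceptions, you only say that you \emph{expect} a composition with no dominant size class to exist. No compositions are specified or checked, and this existence statement is most of the theorem. What you need is a reduction that makes the search finite. Adjoining the three parts $(1,1,2)$ preserves the criterion: the order grows by $4$, the allowance $n/2$ grows by $2$, and only the size-$1$ and size-$2$ classes grow, each by exactly $2$. This move sends $(n,k)$ to $(n+4,k+3)$ and keeps $q=4d-n=3n-4k$ fixed. So you only need seeds with $k\in\{3,4,5\}$, for example
\begin{itemize}
\item $(1,m-1,m)$ and $(2,m-1,m)$;
\item $(1,1,m-2,m)$ and $(1,1,m-1,m)$;
\item $(1,1,1,m-3,m)$ and $(1,1,1,m-2,m)$.
\end{itemize}
Add $(1,1,1,1,2,3)$ and $(1,1,1,1,2,5)$ to replace the missing three-part seeds of orders $5$ and $7$ when $k\equiv0\pmod3$ and $k>3$. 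Choosing $k_0\in\{3,4,5\}$ with $k_0\equiv k\pmod 3$ then shows that the only failures are $q=1$ and $(n,k)\in\{(5,3),(7,3)\}$.

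\textbf{The case $n=4d-1$.} Your upper bound here is wrong as written, and your lower bound is left unresolved.

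For the upper bound, $n-2d=2d-1=(n-1)/2$, not $(n+1)/2$, so $m_1\ge2d-1$ does not force the singleton class to dominate. You must add the following. If $m_1\ge2d$, the singletons dominate. If $m_1=2d-1$, then $P=2d$ is spread over exactly $k-m_1=d$ nonsingleton parts. Every one of them therefore has size $2$, and the size-$2$ class has mass $2d>n/2$.

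For the lower bound, your composition with $2d-1$ singletons and $d$ doubletons does work, but you applied $B=2P$ to a class that is not dominant. The dominant class consists of the $d$ parts of size $2$, with $t=2\cdot2d-n=1$. Any block of total $1$ has assignment score $1$, so \cref{lem:mp:dominant-block} gives $B=n-1$. Alternatively, take $2d$ singletons, $d-2$ doubletons and one part of size $3$. This gives a dominant singleton class with $t=1$, and it exists because $k\ge3$ forces $d\ge2$.

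The same fact, that a dominant class with $t=1$ yields $n-1$, settles $K_{1,1,3}$ and $K_{1,2,4}$ in the two small cases. Your enumeration for their upper bounds is fine.
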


The proof uses the criterion in \cref{cor:mp:full-change}:
if $\mathbf p=(p_1,\ldots,p_k)$ has total $n$, then
\begin{equation}
\label{ext:full-change}
 B(K_{\mathbf p})=n
 \quad\Longleftrightarrow\quad
 s\,|\{i:p_i=s\}|\le\frac n2
 \quad\text{for every occurring part size }s.
\end{equation}
The relevant quantity is therefore the combined size of all parts of
the same size, rather than the size of an individual part.

\begin{proof}[Proof of \cref{ext:ordinary}]
We first establish the upper bound
\begin{equation}
\label{ext:ordinary-bound}
 \mathcal M(n,k)\le\min\{n,4(n-k)\}.
\end{equation}
By \cref{ext:completion}, it is enough to consider
$K_{\mathbf p}$.  Let $a$ be its number of nonsingleton parts, and let
$T$ be their total size.  Since
\[
 d=n-k=\sum_{i=1}^k(p_i-1),
\]
each nonsingleton part contributes at least one to $d$, whence
$a\le d$ and $T=d+a\le2d$.

Represent a disturbance by its matrix $M=(m_{ij})$, with row $i$
corresponding to part $i$ and column $j$ to color $j$.  As in
\eqref{eq:mp:matrix-ordinary}, the corresponding row and column sums
are $p_i$.  The rows belonging to singleton parts have total mass
$n-T$.  At most $T$ of this mass can lie in nonsingleton columns.
Consequently the singleton-to-singleton block has mass at least
$n-2T\ge n-4d$.  Each positive entry in this block is $1$, and no two
positive entries share a row or a column.  These entries extend to a
permutation of the singleton indices.  Completing the permutation
within each of the other part-size classes gives an allowed repair
retaining all those entries.  Thus at most $4d$ vertices need to change,
which proves \eqref{ext:ordinary-bound}.

Suppose first that $n\ge4d$.  Take $d$ parts of size $2$ and $n-2d$
singleton parts; their number is $n-d=k$.  For each size-$2$ part,
choose two distinct, previously unused singleton parts.  Move the two
tokens of its source color to these singletons, and move the two
singleton colors into the size-$2$ part.  There are enough singleton
parts because $n-2d\ge2d$.  Leave the remaining singleton colors fixed.
The $4d$ exchanged vertices carry colors whose multiplicities differ
from the sizes of their parts, so none can be retained by a
count-preserving repair.  All remaining vertices can be retained.
This gives equality in \eqref{ext:ordinary-bound}.  The construction
also includes $d=0$, when no exchanges are made.

Next suppose $n=4d-1$, so $k=3d-1$.  In any part-size vector, if
$a<d$, the singleton class has total size
\[
 k-a\ge (3d-1)-(d-1)=2d>n/2.
\]
If $a=d$, then every nonsingleton part has size $2$, and this part-size
class has total size $2d>n/2$.  In either case,
\eqref{ext:full-change} excludes villainy $n$, so
$\mathcal M(n,k)\le n-1$.  To attain $n-1$, take $d$ size-$2$ parts
and $2d-1$ singletons.  Perform the preceding exchanges for $d-1$ of
the size-$2$ parts.  In the last size-$2$ part, exchange one color
token with the last singleton and leave the other token fixed.
Exactly one unit of the disturbance matrix lies in blocks whose row
and column parts have the same size.  It is a diagonal entry, so the
maximum number of retained vertices is exactly one.

It remains to find profiles satisfying \eqref{ext:full-change} when
$n<4d-1$, apart from the two stated small exceptions.  The following
construction also covers the boundary $n=4d$.  Put
\[
 q=4d-n=3n-4k.
\]
We claim that such a profile exists whenever $k\ge3$, $q\ge0$, and
$q\ne1$, except for $(n,k)=(5,3)$ and $(7,3)$.

If a profile satisfies \eqref{ext:full-change}, adjoining the three
parts $(1,1,2)$ preserves the criterion.  The order increases by $4$,
so the permitted mass $n/2$ increases by $2$.  The size-$1$ and
size-$2$ classes each gain exactly $2$, and all other classes are
unchanged.  It is therefore enough to use the seed profiles in the
following table and repeatedly adjoin $(1,1,2)$.
\begin{center}
\begin{tabular}{ccl}
Number of parts & Order & Part sizes\\
\hline
$3$ & $4$ & $(1,1,2)$\\
$3$ & $2m\ge6$ & $(1,m-1,m)$\\
$3$ & $2m+1\ge9$ & $(2,m-1,m)$\\
$4$ & $2m\ge6$ & $(1,1,m-2,m)$\\
$4$ & $2m+1\ge7$ & $(1,1,m-1,m)$\\
$5$ & $2m\ge8$ & $(1,1,1,m-3,m)$\\
$5$ & $2m+1\ge9$ & $(1,1,1,m-2,m)$\\
$6$ & $9$ & $(1,1,1,1,2,3)$\\
$6$ & $11$ & $(1,1,1,1,2,5)$
\end{tabular}
\end{center}
Each seed satisfies \eqref{ext:full-change}.  In the ranged families,
the nonsingleton sizes are distinct and at most half the order.  The
only endpoints at which an additional displayed part becomes a
singleton are $(1,1,1,3)$ and $(1,1,1,1,4)$; their singleton masses
are also at most half the order.  The largest equal-size class in the
two six-part seeds has mass $4$ and $5$, respectively.

For completeness, these seeds cover all the claimed parameters.
Choose $k_0\in\{3,4,5\}$ with $k_0\equiv k\pmod3$, and set
\[
 j=\frac{k-k_0}{3},\qquad n_0=n-4j.
\]
Then $j\ge0$ and $3n_0-4k_0=q$.  For $k_0=4$, this gives
$n_0\ge6$, so a four-part seed applies.  For $k_0=5$, it gives
$n_0\ge7$; equality would imply $q=1$, which is excluded, so a
five-part seed applies.  For $k_0=3$, it gives $n_0\ge4$, and a
three-part seed applies unless $n_0\in\{5,7\}$.  In those two cases,
if $j\ge1$, use the six-part seed of order $n_0+4$ and adjoin
$j-1$ copies of $(1,1,2)$.  If $j=0$, the parameters are precisely
the two excluded pairs.  In every other case, adjoin $j$ copies to
the chosen $k_0$-part seed.  This proves the claim, and hence yields
$\mathcal M(n,k)=n$ throughout this range.

For $(n,k)=(5,3)$, the possible unordered part-size vectors are
$(1,1,3)$ and $(1,2,2)$.  For $(n,k)=(7,3)$, they are
\[
 (1,1,5),\quad (1,2,4),\quad (1,3,3),\quad (2,2,3).
\]
Every one has an equal-size class of mass greater than $n/2$, so the
extremal villainy is at most $n-1$.  The disturbance matrices
\[
 \begin{pmatrix}0&0&1\\0&0&1\\1&1&1\end{pmatrix},
 \qquad
 \begin{pmatrix}0&0&1\\0&0&2\\1&2&1\end{pmatrix}
\]
have respective margin vectors $(1,1,3)$ and $(1,2,4)$.  In each
matrix, an allowed permutation retains at most one vertex, and the
identity retains one.  Both upper bounds are therefore attained.

Finally, $k=1$ gives zero villainy.  For $k=2$, apply the complete
bipartite formulas in \cref{cor:mp:bipartite}.  Unequal
parts $a<b$ give $B(K_{a,b})=2a$, whereas equal parts give
$B(K_{a,a})=2\lfloor a/2\rfloor$.  Their maximum at fixed sum $n$
is $2\lfloor(n-1)/2\rfloor$.  It is attained by
$((n-1)/2,(n+1)/2)$ for odd $n$, and by
$(n/2-1,n/2+1)$ for even $n\ge4$.  When $n=2$, the graph is $K_2$
and the value is zero.
\end{proof}

\subsection{Weak repairs and an assignment parity obstruction}

Call a square matrix \emph{balanced} if its $i$th row sum equals its
$i$th column sum for every $i$.  The row sums need not all be equal.
Let $\mathcal B_{n,k}$ be the set of balanced, nonnegative integer
$k\times k$ matrices with total sum $n$ and with all row sums positive.
\cref{ext:completion} and \eqref{eq:mp:matrix-weak} give
\begin{equation}
\label{ext:weak-matrix}
 \mathcal M_{\mathrm w}(n,k)
   =n-\min_{M\in\mathcal B_{n,k}}\tau(M),
 \qquad
 \tau(M)=\max_{\sigma\in S_k}\sum_{i=1}^k m_{i,\sigma(i)}.
\end{equation}
\begin{theorem}
\label{ext:weak}
One has $\mathcal M_{\mathrm w}(n,1)=0$.  For $2\le k\le n$, put
\[
 L=\max\left\{\left\lceil\frac nk\right\rceil,\,2k-n\right\},
\]
and define
\[
 \varepsilon=
 \begin{cases}
  1,& L\text{ is odd},\quad (k\text{ is even or }L<k),
       \quad n\ge k(L-1)+2,\\
  0,&\text{otherwise}.
 \end{cases}
\]
Then
\[
 \mathcal M_{\mathrm w}(n,k)=n-L-\varepsilon.
\]
The maximum is attained by a complete $k$-partite graph with at most
three distinct part sizes.
\end{theorem}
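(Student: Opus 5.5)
By \eqref{ext:weak-matrix}, it suffices to show that $\min_{M\in\mathcal B_{n,k}}\tau(M)=L+\varepsilon$ for $2\le k\le n$. The case $k=1$ is trivial, since a one-coloured graph has a single legal disturbance, which is already proper. I will prove the lower bound $\tau(M)\ge L$, then the parity correction, and finally give explicit balanced matrices for the upper bound. Throughout, I will use \cref{lem:assignment-duality}: $\tau(M)$ equals the least cost of nonnegative integer potentials $\mathbf u,\mathbf v$ covering $M$.

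\emph{Lower bound $L$.} The bound $\tau(M)\ge\lceil n/k\rceil$ comes from averaging. The $k$ cyclic-shift permutations partition all cells, so some permutation collects at least $n/k$ units. For the bound $\tau(M)\ge 2k-n$, take optimal covering potentials. Let $R_0$ be the rows with $u_i=0$ and $C_0$ the columns with $v_j=0$. Then $M$ vanishes on $R_0\times C_0$, so the necessity part of \cref{lem:transport} gives $p(R_0)+p(C_0)\le n$. Since all margins are at least one, $|R_0|+|C_0|\le n$. Every row outside $R_0$ and every column outside $C_0$ costs at least one, so the cost is at least $2k-|R_0|-|C_0|\ge 2k-n$.

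\emph{Parity correction.} Here I suppose that $\varepsilon=1$ and that some $M\in\mathcal B_{n,k}$ has $\tau(M)=L$, and aim for a contradiction. First I will use \cref{lem:mp:balancing}-type balancing, as in the proof of \cref{thm:mp:ordinary}, to normalize the optimal potentials. The hypothesis $n\ge k(L-1)+2$ says the total mass is large compared with the cost $L$. Every cell obeys $m_{ij}\le u_i+v_j$, and since $\sum_{i,j}(u_i+v_j)=kL$, the slack $kL-n$ is at most $k-2$. My plan is to show that such a small slack forces near-tightness of $m_{ij}=u_i+v_j$ in all but at most $k-2$ units of cells. Balance then gives $\sum_j m_{ij}=\sum_j m_{ji}$, that is, $ku_i+v(\cdot)-\mathrm{slack}_i=k v_i+u(\cdot)-\mathrm{slack}'_i$. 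Comparing these relations should force $\mathbf u$ and $\mathbf v$ to have a rigid form, for instance both nearly constant with $u_i-v_i$ independent of $i$. The conclusion I want is $L=\sum u_i+\sum v_i\equiv 2\sum u_i \pmod 2$, or more precisely that $L\equiv 0$ modulo a quantity which is even when $k$ is even or $L<k$. That contradicts $L$ odd. The condition ``$k$ even or $L<k$'' should appear exactly where one tries to split an odd $L$ symmetrically across $k$ coordinates, so that the $\mathbf u$ and $\mathbf v$ contributions can coincide. I expect this step to be the main obstacle: pinning down the equality structure of the covering and turning balance into a parity statement, with the threshold $n\ge k(L-1)+2$ appearing naturally.

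\emph{Upper bound.} I will build explicit balanced matrices with $\tau(M)=L+\varepsilon$, and record their row sums as part sizes. I expect three regimes. When $L=\lceil n/k\rceil\ge 2k-n$, I will use near-circulant matrices. These take the form $m_{ij}=u_i+v_j$ with potentials taking two values, adjusted so that the margins equal the row sums and the slack is distributed symmetrically. When $L=2k-n$, I will use a permutation-like matrix on the singleton parts together with a small block that absorbs the remaining mass. In the case $\varepsilon=1$, I will relax the potentials by one unit. Each construction certifies $\tau(M)\le L+\varepsilon$ through its covering potentials, verified with \cref{lem:transport} in the style of \cref{thm:mp:weak}. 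The row sums will take at most three distinct values, which gives the final assertion via \cref{ext:completion}.
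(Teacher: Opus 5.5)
Your lower bound $\tau(M)\ge L$ is complete. Bounding $2k-n$ through the rows and columns with zero potential and the necessity half of \cref{lem:transport} is a valid alternative to the paper's component count.

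The first genuine gap is the parity correction. You derive the right balance identity, but you never show why slack below $k-1$ forces $u_i-v_i$ to be constant, and that is the heart of the argument. The missing step is a cut argument. Put $z_{ij}=u_i+v_j-m_{ij}\ge0$, $d_i=u_i-v_i$ and $\eta=\sum_i d_i$, so that balance reads $\sum_j z_{ij}-\sum_j z_{ji}=kd_i-\eta$. Suppose the $d_i$ are not all equal. Let $I$ be the set of $q$ indices where $d_i$ is minimal, and let $S=\sum_i(d_i-\min_j d_j)\ge k-q$. Summing the identity over $I$ cancels the terms inside $I$. It shows that the slack on the cells of $I^{c}\times I$ is at least $qS\ge q(k-q)\ge k-1$. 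Hence $kL-n\ge k-1$, contradicting $n\ge k(L-1)+2$. So $u_i-v_i=d$ for every $i$, and $L=2\sum_i v_i+kd$. If $L$ is odd, then $k$ and $d$ are odd, so $u_i+v_i\ge|d|\ge1$ for all $i$ and $L\ge k$, which is excluded.

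You should also drop the proposed normalization by \cref{lem:mp:balancing}. Balancing $\mathbf u$ and $\mathbf v$ separately generally destroys the cover of your fixed $M$. It also breaks the pairing of row $i$ with column $i$, which is exactly what the argument uses. In \cref{thm:mp:ordinary} that normalization was legitimate only because the margins had a uniform bound $d$ and the matrix could be replaced. Nor do the potentials need to be nearly constant; only their difference is forced.

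The second gap is the upper bound. You specify no matrices, and the proposed additive template cannot cover all cases. If $M=(u_i+v_j)-Z$ with $Z$ symmetric, then row sum $i$ minus column sum $i$ equals $kd_i-\eta$. So $M$ is balanced only when $u_i-v_i$ is constant, and the computation above then forces either $L$ even, or $k$ odd with $L\ge k$. Consider the case where $L$ is odd, $k$ is even or $L<k$, and $n=k(L-1)+1$, so that $\varepsilon=0$. There you need a non-additive example whose slack, exactly $k-1$, crosses a single cut. The paper writes $L=2h+1$ and takes $m_{11}=L$, $m_{1i}=m_{i1}=h$ for $i\ge2$, and all other entries zero. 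This is certified by $u_1=h+1$, $v_1=h$, and zero potentials elsewhere.

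The paper's other examples are as follows.
\begin{itemize}
\item For $k\le n\le 2k-2$: a star with $n-k+1$ leaves plus unit diagonal entries on the remaining indices, after checking that $\varepsilon=0$ there.
\item For $n\ge 2k-1$ and $L+\varepsilon=2h$: a weighted star with leaf weights in $[1,h]$ and center entry in $[0,2h]$, covered by $u_1=v_1=h$.
\item For odd $L\ge k$ with $k$ odd: $m_{ij}=1+a_i+a_j$ with $a_1=(L-k)/2$ and the other $a_i=0$. One then lowers $kL-n$ diagonal entries outside index $1$ to zero, and a cyclic permutation still has weight $L$.
\end{itemize}
The three-part-size claim is read off from these explicit row sums, so your proposal leaves it unsupported as well.
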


We prove the theorem after two assignment bounds. Besides averaging,
integrality and balance impose the parity restriction responsible for
$\varepsilon$.

\begin{lemma}
\label{ext:elementary-assignment}
If $M$ is a nonnegative integer $k\times k$ matrix of total sum $n$
with positive row and column sums, then
\[
 \tau(M)\ge
 \max\left\{\left\lceil\frac nk\right\rceil,\,2k-n\right\}.
\]
\end{lemma}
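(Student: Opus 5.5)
Both bounds will come from the integral assignment duality of \cref{lem:assignment-duality}. Choose nonnegative integer potentials $\mathbf u,\mathbf v$ covering $M$ with $\sum_iu_i+\sum_jv_j=\tau(M)$. Each bound is then a consequence of the covering inequalities $m_{ij}\le u_i+v_j$ together with the positivity of the margins.

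For the averaging bound $\tau(M)\ge\lceil n/k\rceil$, sum the covering inequalities over all $k^2$ pairs $(i,j)$. This gives
\[
 n\le k\Bigl(\sum_iu_i+\sum_jv_j\Bigr)=k\,\tau(M).
\]
Since $\tau(M)$ is an integer, $\tau(M)\ge\lceil n/k\rceil$. Equivalently, one could average the scores of the $k$ cyclic-shift permutations, which partition all the entries.

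For the bound $\tau(M)\ge 2k-n$, call an index $i$ \emph{uncovered on the row side} if $u_i=0$, and similarly for columns. Let $a$ be the number of rows with $u_i=0$ and $b$ the number of columns with $v_j=0$. Then
\[
 \tau(M)\ge(k-a)+(k-b)=2k-a-b,
 \qquad\text{since }u_i,v_j\ge1\text{ off these sets.}
\]
The entries in the $a\times b$ zero block vanish by covering. Each of the $a$ rows has positive sum, so its mass lies in columns with $v_j\ge1$; likewise each of the $b$ columns puts its mass in rows with $u_i\ge1$. These two groups of entries are disjoint, so $a+b\le n-c$, where $c$ counts the mass of the remaining block.

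The cheap consequence $a+b\le n$ yields only $\tau(M)\ge 2k-n$, which is exactly the claim. So the argument is: positive row sums give at least one unit per zero-potential row, positive column sums give at least one unit per zero-potential column, these units lie in distinct entries (one avoiding the zero block row-wise, the other column-wise), hence $a+b\le n$, and then $\tau(M)\ge 2k-a-b\ge 2k-n$.

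The only delicate step is making sure the units counted for zero-potential rows and zero-potential columns are distinct. A unit in row $i$ with $u_i=0$ lies in a column with $v_j\ge1$, while a unit in column $j$ with $v_j=0$ lies in a row with $u_i\ge1$. An entry cannot satisfy both conditions, since that would require $u_i=0$ and $u_i\ge1$ simultaneously. Hence the entries are distinct, and the total mass $n$ bounds $a+b$.

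A direct combinatorial alternative would take a maximum matching on the positive entries via König's theorem: the minimum vertex cover has size $\nu$, the uncovered rows and columns each need mass, giving $2k-2\nu\le n-\nu$, so $\tau(M)\ge\nu\ge 2k-n$. This confirms the bound, but the potential argument above suffices.
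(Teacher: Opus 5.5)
Your proof is correct, but it runs on the dual side of \cref{lem:assignment-duality}, whereas the paper's proof is purely primal.

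For the bound $\lceil n/k\rceil$, the paper averages all $k!$ permutation scores, since each entry lies in $(k-1)!$ of them. Summing your covering inequalities $m_{ij}\le u_i+v_j$ over all $k^2$ positions gives the same inequality $n\le k\tau(M)$, this time from an optimal potential pair.

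For $2k-n$, the paper takes the bipartite support graph of the positive entries. It uses $e\ge 2k-c$ and $e\le n$ to get at least $2k-n$ components, and picks one positive entry from each component to form a matching of weight at least $c$. You instead count the $a$ rows and $b$ columns with zero potential. Each such row or column must send a unit of mass to an opposite index of positive potential, so these units occupy $a+b$ distinct entries. You then conclude $\tau(M)=\sum_iu_i+\sum_jv_j\ge 2k-a-b\ge 2k-n$.

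The paper's route is self-contained, needs no duality, and explicitly exhibits a good permutation. Yours depends on the integral duality lemma, but it extracts both bounds from a single optimal certificate, in the same style as the potential argument of \cref{ext:odd-assignment}. Your closing matching aside is the primal variant closest to the paper's component argument: there, $2k-2\nu$ unmatched vertices each need a unit of mass outside the $\nu$ matched entries, so $\tau(M)\ge\nu\ge 2k-n$.
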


\begin{proof}
Each entry occurs in exactly $(k-1)!$ of the $k!$ permutation sums.
Their average is consequently $n/k$, giving the first bound.

For the second, form the bipartite graph with one vertex for each row
and each column, joining a row to a column when their entry is positive.
This graph has $2k$ vertices, no isolated vertices, and at most $n$
edges.  If it has $c$ components and $e$ edges, then
$e\ge2k-c$, since each connected component on $v$ vertices has at
least $v-1$ edges.  Hence $c\ge2k-n$.  Choose one edge from each
component.  These edges form a matching, and their entries have total
weight at least $c$.  Extend this matching to a permutation, using
zero entries when necessary.  Nonnegativity then gives
$\tau(M)\ge c\ge2k-n$.
\end{proof}

The elementary bounds use only positivity of the margins. Balance
adds a restriction when the assignment score is close to its average
lower bound: the two sets of optimal potentials must then differ by
a constant. This is the source of the parity correction.

\Needspace{6\baselineskip}
\begin{lemma}[Balanced assignment bound]
\label{ext:odd-assignment}
Let $M$ be a balanced, nonnegative integer matrix of order $k\ge2$,
with total sum $n$ and assignment score $t=\tau(M)$. If
\[
 n>k(t-1)+1,
\]
then there exist $\mathbf a\in\mathbb Z_{\ge0}^{k}$ and
$c\in\{0,1\}$ such that
\begin{equation}\label{ext:additive-cover}
 m_{ij}\le a_i+a_j+c\quad(i,j\in[k]),
 \qquad t=2\sum_i a_i+kc.
\end{equation}
Consequently, if $t$ is odd and either $k$ is even or $t<k$, then
\[
 n\le k(t-1)+1.
\]
\end{lemma}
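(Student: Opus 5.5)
The plan is to start from optimal integral potentials for $M$ and use balance to show that, when the total slack of the covering is small, the row and column potentials differ by a constant. By \cref{lem:assignment-duality}, choose $\mathbf u,\mathbf v\in\mathbb Z_{\ge0}^k$ with $u_i+v_j\ge m_{ij}$ for all $i,j$ and $U+V=t$, where $U=\sum_iu_i$ and $V=\sum_jv_j$.

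\emph{Step 1: bound the total slack.} Summing the covering inequalities over all $k^2$ entries gives $kt-n\ge0$. Call this quantity the total slack $S$. The hypothesis $n>k(t-1)+1$ says exactly that $S\le k-2$.

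\emph{Step 2: compare row and column slacks.} Let $r_i$ be the common $i$th row and column sum of $M$, which exists because $M$ is balanced. The slack of row $i$ is $\rho_i=ku_i+V-r_i\ge0$, and the slack of column $i$ is $\gamma_i=U+kv_i-r_i\ge0$. Both families sum to $S$. Put $x_i=u_i-v_i$ and $X=\sum_ix_i=U-V$. Then
\[
\rho_i-\gamma_i=kx_i-X .
\]

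\emph{Step 3: show that $x_i$ is constant.} This is the main point, where balance is used.
\begin{itemize}
\item Since $|kx_i-X|\le\max\{\rho_i,\gamma_i\}\le S\le k-2$, the values $kx_i$ all lie in an open interval of length $2k-2$. Hence the $x_i$ take at most two consecutive values, $e$ and $e+1$.
\item Suppose both values occur, with $p$ indices equal to $e+1$ and $0<p<k$. Then $X=ke+p$, so each such index has $kx_i-X=k-p>0$.
\item Consequently $\rho_i\ge k-p$ for these $p$ indices, and $S\ge p(k-p)\ge k-1$. This contradicts $S\le k-2$.
\end{itemize}
Therefore $u_i=v_i+x$ for a single integer $x$.

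\emph{Step 4: construct $\mathbf a$ and $c$.} Write $x=2y+c$ with $c\in\{0,1\}$ and set $a_i=v_i+y=u_i-y-c$. Then $u_i+v_j=a_i+a_j+c$, so the cover in \eqref{ext:additive-cover} holds. Also $t=U+V=2\sum_ia_i+kc$. The entries $a_i$ are nonnegative: if $y\ge0$, use $a_i=v_i+y$; if $y<0$, use $a_i=u_i-y-c\ge u_i\ge0$, since $-y\ge1\ge c$.

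\emph{Step 5: the consequence.} If $k$ is even, then $t=2\sum_ia_i+kc$ is even. If $t<k$, then $c=0$, since $c=1$ would give $t\ge k$, so again $t$ is even. Hence when $t$ is odd and either $k$ is even or $t<k$, the hypothesis $n>k(t-1)+1$ must fail, that is, $n\le k(t-1)+1$.

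The delicate step is Step 3: the two-value case is excluded only because the mass $p(k-p)$ exceeds the slack budget.
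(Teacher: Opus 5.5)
Your proposal is correct and follows essentially the same route as the paper. It uses the same optimal integral potentials, the same balance identity $\rho_i-\gamma_i=k(u_i-v_i)-(U-V)$, the same conclusion that $u_i-v_i$ is constant, and the same construction of $\mathbf a$ and $c$ with the same parity consequence. The only variation is in excluding nonconstant differences. You first confine $u_i-v_i$ to two consecutive values through the per-index bound and then sum row slacks over the upper class. The paper instead sums the identity over the set of minimizers and bounds the resulting cross slack below by $q(k-q)$.
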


\begin{proof}
Take nonnegative integer optimal potentials $\mathbf u,\mathbf v$
from \cref{lem:assignment-duality}, retaining the original column
labels. Thus $m_{ij}\le u_i+v_j$ and $\sum_i(u_i+v_i)=t$.
Set
\[
 z_{ij}=u_i+v_j-m_{ij},\qquad
 d_i=u_i-v_i,\qquad \eta=\sum_i d_i.
\]
Nonnegativity and balance give
\begin{equation}
\label{ext:residual-balance}
 \Delta:=\sum_{i,j}z_{ij}=kt-n,
 \qquad
 \sum_jz_{ij}-\sum_jz_{ji}=kd_i-\eta.
\end{equation}

Suppose that the integers $d_i$ are not all equal. Let
$b=\min_i d_i$, $I=\{i:d_i=b\}$, $q=|I|$, and
$S=\sum_i(d_i-b)$. Then $1\le q\le k-1$ and $S\ge k-q$.
Since $\eta=kb+S$, summing the second identity
in~\eqref{ext:residual-balance} over $I$ and cancelling internal
terms gives
\[
 \sum_{\substack{i\in I\\j\notin I}}z_{ji}
 -\sum_{\substack{i\in I\\j\notin I}}z_{ij}=qS.
\]
It follows that
\[
 \Delta\ge qS\ge q(k-q)\ge k-1,
\]
where the last inequality is equivalent to
$(q-1)(k-q-1)\ge0$.

Under the stated hypothesis, $\Delta<k-1$, so all $d_i$ equal an
integer $d$. Put
\[
 a_i=\min\{u_i,v_i\}+\left\lfloor\frac{|d|}{2}\right\rfloor,
 \qquad c=|d|-2\left\lfloor\frac{|d|}{2}\right\rfloor.
\]
These quantities are nonnegative integers and $c\in\{0,1\}$.
Because $u_i-v_i=d$ for every $i$, we have
\[
 u_i+v_j=a_i+a_j+c.
\]
Summing the corresponding diagonal identity gives
$t=2\sum_i a_i+kc$, proving~\eqref{ext:additive-cover}.
If $t$ is odd, this identity forces $c=1$ and $k$ odd, and hence
$t\ge k$. Its contrapositive gives the final assertion.
\end{proof}

\begin{proof}[Proof of \cref{ext:weak}]
The case $k=1$ is immediate.  Assume $k\ge2$ and write
$t=L+\varepsilon$.  \cref{ext:elementary-assignment} gives
$\tau(M)\ge L$ for every $M\in\mathcal B_{n,k}$.  If
$\varepsilon=1$, \cref{ext:odd-assignment} rules out
$\tau(M)=L$.  Thus
\begin{equation}
\label{ext:target-assignment}
 \tau(M)\ge t\qquad(M\in\mathcal B_{n,k}).
\end{equation}
We construct a matrix attaining equality.  The constructions are
symmetric, which ensures balance automatically.

First suppose $k\le n\le2k-2$, and put $r=n-k$.  On $r+2$ indices,
take the adjacency matrix of a star with $r+1$ leaves.  On each of
the remaining $k-r-2$ indices, put a diagonal entry $1$; set every
other entry to zero.  The total sum is
\[
 2(r+1)+(k-r-2)=n,
\]
and every row sum is positive.  A permutation can use at most two
positive entries from the star: at most one lies in the center row
and at most one in the center column.  Two are attainable by
exchanging the center with a leaf.  Fixing all the remaining indices
therefore gives
\[
 \tau(M)=2+(k-r-2)=2k-n=L.
\]
Here $\varepsilon=0$: if $L$ is odd, then $L\ge3$, and
$n\le2k-2<k(L-1)+2$.  Hence this matrix has the required weight $t$.

For the remainder of the proof, assume $n\ge2k-1$.  Then
$L=\lceil n/k\rceil\ge2$.

Suppose next that $t=2h$ is even.  Use index $1$ as the center of a
weighted star and set
\[
 m_{1i}=m_{i1}=a_i\quad(2\le i\le k),\qquad m_{11}=b,
\]
where $1\le a_i\le h$ and $0\le b\le2h$; all other entries are zero.
The sum $S=\sum_{i=2}^k a_i$ can be any integer between $k-1$ and
$h(k-1)$.  For each such $S$, the possible total sums $2S+b$ fill the
integer interval $[2S,2S+2h]$.  Consecutive intervals overlap, because
$h\ge1$.  Together they cover every integer from $2k-2$ to $2hk$.
As $2k-1\le n\le kt=2hk$, the entries can be chosen to give total
sum $n$. For the chosen $S$, take the leaf weights as equally as
possible, so that each is either $\lfloor S/(k-1)\rfloor$ or
$\lceil S/(k-1)\rceil$; both values that occur lie between $1$ and $h$.
Every row sum is positive.  The potentials
\[
 u_1=v_1=h,\qquad u_i=v_i=0\quad(i>1)
\]
satisfy $m_{ij}\le u_i+v_j$, so every permutation has weight at most
$2h=t$.  Equality follows from \eqref{ext:target-assignment}.

It remains to consider odd $t$.  The definition of $\varepsilon$
then forces $\varepsilon=0$ and $t=L$.  If $k$ is even or $t<k$,
the fact that $\varepsilon=0$ implies $n\le k(t-1)+1$.  On the
other hand, $t=\lceil n/k\rceil$ implies $n>k(t-1)$.  Thus
\[
 n=k(t-1)+1.
\]
Write $t=2h+1$, with $h\ge1$.  In the weighted-star construction,
take every $a_i=h$ and take $b=t$.  Its total sum is
\[
 2h(k-1)+(2h+1)=k(t-1)+1=n.
\]
The identity permutation has weight $t$.  The potentials
$u_1=h+1$, $v_1=h$, with all other potentials zero, give an upper
bound of $t$, so $\tau(M)=t$.

Finally, suppose both $t$ and $k$ are odd and $t\ge k$. In
particular, $k\ge3$. The additive form in
\cref{ext:odd-assignment} suggests starting with a matrix of total
sum $kt$ whose assignment weights all equal $t$. Set
\[
 a_1=\frac{t-k}{2},\qquad a_i=0\quad(i>1),
 \qquad m_{ij}=1+a_i+a_j.
\]
This symmetric matrix has total sum $kt$, and every permutation has
weight
\[
 k+2\sum_i a_i=t.
\]
Put $\delta=kt-n$.  Since $t=\lceil n/k\rceil$, we have
$0\le\delta\le k-1$.  Choose $\delta$ indices from
$\{2,\ldots,k\}$ and reduce their diagonal entries from $1$ to $0$.
The resulting matrix remains symmetric, has positive row sums, and
has total sum $n$.  No assignment weight increases.  A cyclic
permutation of the $k$ indices avoids every altered diagonal entry
and still has weight $t$.  This gives the desired matrix in the last
case.

Each constructed matrix belongs to $\mathcal B_{n,k}$ and hence
represents a legal disturbance on the complete multipartite graph
whose part sizes are its row sums.  Equation~\eqref{ext:weak-matrix}
and \eqref{ext:target-assignment} now prove the formula.

These extremal graphs need at most three distinct part sizes.
The first and third constructions have at most two distinct row sums.
In the even weighted-star construction, the leaf weights take at most
two consecutive values, and the center supplies at most one further
row sum. In the final construction, writing $a=(t-k)/2$, the row sums
are $k+(k+1)a$ at the first index and either $k+a$ or $k+a-1$ at the
other indices. Thus there are at most three row sums in every case.
\end{proof}

\section{Concluding remarks}\label{sec:questions}

Prescribed multiplicities and agreement quotas can be met simultaneously
on odd cycles, while complete multipartite repairs reduce to integral
assignments. Optimizing the latter description over the part sizes
determines the extrema over all graphs of fixed order and chromatic
number. The remaining questions concern which parts of these
descriptions extend beyond the settings treated here.

\begin{problem}\label{problem:profile-interpolation}
Beyond the odd-cycle case of \cref{odd:interpolation}, characterize
graph--palette pairs for which every extendable nonempty
partial coloring admits every integer profile coordinatewise between
two of its extension profiles, subject to the fixed total.
For graphs of bounded treewidth and a fixed palette, determine sharp
simultaneous per-color agreement quotas when a feasible target profile
is prescribed.
\end{problem}

The second question concerns guaranteed agreement counts, in addition
to existence of a coloring with the prescribed profile. For comparison,
Bradshaw, Masa\v{r}\'ik, and Stacho~\cite{bradshaw2022} prove that
graphs of treewidth two are $1/3$-flexibly three-choosable, without
prescribing the target multiplicities. This means that, for arbitrary
three-element lists of allowed colors and a request for one listed
color at each vertex of any subset $R$, some proper coloring from the
lists satisfies at least $|R|/3$ requests.

The ordinary multipartite formula has at most $k^2$ candidates.
For weak villainy, the ordered-potential search depends on the
spread of the part sizes, and \cref{cor:mp:computation} gives a
polynomial-time algorithm for every fixed number of parts.

\begin{question}\label{question:weak-arithmetic}
Can $B_{\mathrm w}(K_{p_1,\ldots,p_k})$ be computed in time
polynomial in the binary encoding length of $(p_1,\ldots,p_k)$,
when the number of parts is also part of the input?
\end{question}

\Cref{ext:completion} guarantees a complete multipartite extremizer,
but need not capture all graphs attaining the same value.

\begin{problem}\label{problem:extremal-structure}
Characterize the graphs attaining $\mathcal M(n,k)$ and
$\mathcal M_{\mathrm w}(n,k)$. In particular, determine which edges
can be removed from a complete multipartite extremizer while preserving
both its chromatic number and its extremal villainy.
\end{problem}

One can also let $B_s(G)$ denote the worst repair cost when the target
profile may differ from the source profile by at most $2s$ in
$\ell_1$ distance. Then $B_0(G)=B(G)$, and $B_s(G)=B_{\mathrm w}(G)$
as soon as
\[
 s\ge\min\{B_{\mathrm w}(G),\,|V(G)|-\chi(G)\}.
\]
Indeed, changing $d$ vertices changes the profile by at most $2d$,
and two positive $k$-color profiles on $n$ vertices have
$\ell_1$ distance at most $2(n-k)$.
For odd cycles, \cref{odd:directed} already gives equality for every
$s\ge0$, even with the source profile fixed. When
$B(G)>B_{\mathrm w}(G)$, determining the intermediate values and the
least profile freedom needed to attain the weak value remains a
further direction.

\section*{Acknowledgments}

Lujia Wang is partially supported by the National Natural Science
Foundation of China (grant No.~12371359). Jiang Jiang, Yuhan Jiang,
Yiyang Li, and Aitao Zhang are
supported by the Math+AI research project of Hainan Bielefeld University
of Applied Sciences.

\section*{Contributions and use of AI}

The authors used AI tools from OpenAI to assist with developing and
checking arguments, computational experiments, literature searches,
and manuscript preparation.
The authors are responsible for all proofs, computations, and conclusions.



\begin{thebibliography}{99}
\bibitem{Birken2026}
M.~Birken,
A Hajnal--Szemer\'edi theorem for skewed colorings,
arXiv:2609.18629 (2026).
\url{https://arxiv.org/abs/2609.18629}.

\bibitem{Bowdoin2025}
M.~C.~Bowdoin, Y.~Chi, C.~B.~Ellington, B.~Ives, S.~Lee,
F.~Morrissette, and J.~A.~Mudrock,
An Ohba-like result for flexible list coloring,
arXiv:2509.24013 (2025), revised 2026.
\url{https://arxiv.org/abs/2509.24013}.

\bibitem{bradshaw2022}
P.~Bradshaw, T.~Masa\v{r}\'ik, and L.~Stacho,
Flexible list colorings in graphs with special degeneracy conditions,
\emph{J. Graph Theory} \textbf{101} (2022), no.~4, 717--745.
\url{https://doi.org/10.1002/jgt.22849}.

\bibitem{cameronwanless2005}
P.~J.~Cameron and I.~M.~Wanless,
Covering radius for sets of permutations,
\emph{Discrete Math.} \textbf{293} (2005), no.~1--3, 91--109.
\url{https://doi.org/10.1016/j.disc.2004.08.024}.

\bibitem{clark2006}
S.~A.~Clark, J.~E.~Holliday, S.~H.~Holliday, P.~D.~Johnson, Jr.,
J.~E.~Trimm, R.~R.~Rubalcaba, and M.~Walsh,
Chromatic villainy in graphs,
\emph{Congr. Numer.} \textbf{182} (2006), 171--182.

\bibitem{dinitz2012}
T.~Dinitz, M.~Hartman, and J.~Soprunova,
Tropical determinant of integer doubly-stochastic matrices,
\emph{Linear Algebra Appl.} \textbf{436} (2012), no.~5, 1212--1227.
\url{https://doi.org/10.1016/j.laa.2011.07.033}.

\bibitem{dvorak2019}
Z.~Dvo\v{r}\'ak, S.~Norin, and L.~Postle,
List coloring with requests,
\emph{J. Graph Theory} \textbf{92} (2019), no.~3, 191--206.
\url{https://doi.org/10.1002/jgt.22447}.

\bibitem{fordfulkerson1956}
L.~R.~Ford, Jr., and D.~R.~Fulkerson,
Maximal flow through a network,
\emph{Canad. J. Math.} \textbf{8} (1956), 399--404.
\url{https://doi.org/10.4153/CJM-1956-045-5}.

\bibitem{gajula2015}
S.~Gajula, I.~Soprunov, and J.~Soprunova,
Tropical determinant on transportation polytopes,
\emph{Linear Algebra Appl.} \textbf{475} (2015), 28--44.
\url{https://doi.org/10.1016/j.laa.2015.01.036}.

\bibitem{jahanbekam2021}
S.~Jahanbekam and M.-R.~Lin,
Characterization of graphs with villainy 2,
arXiv:2103.05816 (2021).
\url{https://arxiv.org/abs/2103.05816}.

\bibitem{kannan1983}
R.~Kannan,
Improved algorithms for integer programming and related lattice problems,
in \emph{Proceedings of the Fifteenth Annual ACM Symposium on
Theory of Computing (STOC 1983)}, 193--206.
\url{https://doi.org/10.1145/800061.808749}.

\bibitem{Kaul2024}
H.~Kaul, R.~Mathew, J.~A.~Mudrock, and M.~J.~Pelsmajer,
Flexible list colorings: Maximizing the number of requests satisfied,
\emph{J. Graph Theory} \textbf{106} (2024), no.~4, 887--906.
\url{https://doi.org/10.1002/jgt.23103}.

\bibitem{Kuchukova2026}
A.~Kuchukova, W.~Perkins, and X.~Povill,
Sampling colorings with fixed color class sizes,
in \emph{53rd International Colloquium on Automata, Languages, and Programming
(ICALP 2026)}, LIPIcs \textbf{374} (2026), 134:1--134:22.
\url{https://doi.org/10.4230/LIPIcs.ICALP.2026.134}.

\bibitem{kuhn1955}
H.~W.~Kuhn,
The Hungarian method for the assignment problem,
\emph{Naval Res. Logist. Quart.} \textbf{2} (1955), 83--97.
\url{https://doi.org/10.1002/nav.3800020109}.

\bibitem{lenstra1983}
H.~W.~Lenstra, Jr.,
Integer programming with a fixed number of variables,
\emph{Math. Oper. Res.} \textbf{8} (1983), no.~4, 538--548.
\url{https://doi.org/10.1287/moor.8.4.538}.

\bibitem{raleigh2018}
A.~Raleigh,
The chromatic villainy of complete multipartite graphs,
M.S. thesis, Rochester Institute of Technology, 2018.
\url{https://repository.rit.edu/theses/9889/}.

\bibitem{west2006}
D.~B.~West,
Chromatic villainy of graphs,
online problem collection, problem dated 2006, presented at REGS 2013.
\url{https://dwest.web.illinois.edu/regs/chromvil.html}
(accessed 23 September 2026).

\end{thebibliography}
\end{document}